\def\st{{\mathfrak t}}
\def\dom{{\rm dom\,}}
\def\sgn{{\rm sgn\,}}
 \newtheorem{thm}{Theorem}[section]
 \newtheorem{cor}[thm]{Corollary}
 \newtheorem{lem}[thm]{Lemma}
 \newtheorem{prop}[thm]{Proposition}
 \theoremstyle{definition}
 \theoremstyle{remark}
 \newtheorem{rem}[thm]{Remark}
 \newtheorem{ex}{Example}
 \numberwithin{equation}{section}
\begin{document}

%
%
%
%
%
%
%
%
%

\title[Various form closures]{Various form closures
associated with a fixed non-semibounded self-adjoint operator}

\author[A. Fleige]{Andreas Fleige}

\address{%
Baroper Schulstrasse 27 a\\
44225 Dortmund\\
Germany}

\email{fleige.andreas@gmx.de}

\subjclass{Primary 47A07; Secondary 46C20, 47B50, 47B25.}

\keywords{closed non-semibounded sesquilinear form, form closure,  Kre\u{\i}n space completion, 
J-non-negative operator, definitizable operator, critical point, self-adjoint operator}

\date{December 21, 2024}
\dedicatory{To the memory of Heinz Langer}

\begin{abstract}
If $T$ is a semibounded self-adjoint operator in a Hilbert space $(H, \, (\cdot , \cdot))$
then the closure of the sesquilinear form $(T \cdot , \cdot)$ is a unique Hilbert space completion.
In the non-semibounded case a closure is a Kre\u{\i}n space completion and generally, it is not unique.
Here, all such closures
are studied.
A one-to-one correspondence between all closed symmetric forms (with ``gap point'' $0$) and all
J-non-negative, J-self-adjoint and boundedly invertible Kre\u{\i}n space operators is observed.
Their eigenspectral functions are 
investigated, in particular near the critical point infinity.
An example for infinitely many closures 
of a fixed form $(T \cdot , \cdot)$ 
is discussed in detail
using a non-semibounded self-adjoint multiplication operator $T$ in a model Hilbert space.
These observations indicate that closed symmetric forms may carry more information than self-adjoint Hilbert space operators.
\end{abstract}

\maketitle

\section{Introduction}\label{sec_introduction}

It is a classical result that semibounded self-adjoint operators in Hilbert spaces
and closed semibounded symmetric sesquilinear forms
are in one-to-one correspondence and hence, include similar information.
In particular, if $T$ is a positive and boundedly invertible operator in the Hilbert space $(H, \, (\cdot , \cdot))$
then the associated closed form $\st [\cdot,\cdot]$ is obtained as the inner product of the Hilbert space completion
(or {\it form closure})
of the positive definite inner product $(T \cdot , \cdot)$
defined on the domain $\dom T$.
Note that this Hilbert space is continuously embedded in $(H, \, (\cdot , \cdot))$.
Conversely, $T$ is recovered from $\st [\cdot,\cdot]$ by Kato's First representation Theorem (see e.g. \cite[VI-§ 2.1]{Kato}).

A first approach to non-semibounded sesqulilinear forms
was initiated by McIntosh in \cite{Mc1, Mc2} (see also e.g. \cite{GKMV, CT}).
However, here we follow a different approach via Kre\u{\i}n space methods outlined e.g. in \cite{F7, FHS, FHSW2, FHSW3}. 
To this end consider a generally non-semibounded self-adjoint operator $T$ in $(H, \, (\cdot , \cdot))$
which is boundedly invertible.
Then, the inner product $(T \cdot , \cdot)$ on $\dom T$
allows a {\it Kre\u{\i}n space} completion, 
i.e. there is a Kre\u{\i}n space $(\dom \st, \, \st [\cdot,\cdot])$ 
continuously embedded  in $(H, \, (\cdot , \cdot))$ and
including $\dom T$ as a dense subspace such that $\st [\cdot,\cdot]$ coincides with $(T \cdot , \cdot)$ on $\dom T$
(cf. \cite[Section 5]{FHS}).
In the terminology of \cite{FHS} this means that $\st [\cdot,\cdot]$ is a {\it closure}
of $(T \cdot , \cdot)$ in $(H, \, (\cdot , \cdot))$
and obviously, this is a generalization of the positive setting from above.
Similarly, the terminology of a {\it closed} symmetric form is generalized to the non-semibounded situation.
Again, $T$ can be recovered from $\st [\cdot,\cdot]$ by a generalization of
Kato's First Representation Theorem (cf. \cite[Theorem 1]{F7} or \cite[Theorem 3.3]{FHS}).

However, a Kre\u{\i}n space completion is generally not unique.
In \cite{CL2} \'Curgus and Langer studied Kre\u{\i}n space completions in detail for a quite general setting.
In the present paper we investigate such completions from the point of view 
of non-semibounded form closures. 
In particular, we study all form closures of $(T \cdot , \cdot)$ for a fixed self-adjoint operator $T$ given as above.

So far, in the previous papers like \cite{F7, FHS, FHSW2, FHSW3}
the main focus was on so-called {\it regular} closed forms $\st [\cdot,\cdot]$
which are defined on $\dom |T|^{\frac{1}{2}}$
and hence, allow a generalization of the Second Representation Theorem
(cf. \cite[Theorem 3]{F7} or \cite[Theorem 4.2]{FHS}).
In particular, in \cite[Theorem 5.2]{FHS} the ``semibounded result'' from the beginning was generalized: 
There is a one-to-one correspondence
between all regular closed forms and all self-adjoint operators with a gap in the real spectrum.
In \cite{FHS} the above restriction to $0 \in \rho(T)$ is shifted to this gap and this is reflected by the introduction
of so-called ``gap points'' of a closed form.

In some sense
this ``one-to-one result'' is not complete since we know 
that there are also non-regular closed forms (see e.g. \cite[Theorem 6.9]{FHSW2}).
In the present paper our main interest is in non-regularity
where 
the form $\st [\cdot,\cdot]$ has no representation by means of $|T|^{\frac{1}{2}}$.
A different one-to-one correspondence is presented allowing all (not only regular) closed forms:

To this end we start with an arbitrary J-non-negative, J-self-adjoint and boundedly inverible operator $A$
in a {\it Kre\u{\i}n space} $(K, \, [\cdot,\cdot])$ and construct a {\it Hilbert space}
$(K_-, \, \{\cdot , \cdot\}_-)$
such that $K \subset K_-$ and the inclusion is dense and continuous.
Then, $\st [\cdot,\cdot] := [\cdot,\cdot]$ is a closed symmetric form in $(K_-, \, \{\cdot , \cdot\}_-)$ 
(with gap point $0$)
and $A$ appears as the range restriction of the representing self-adjoint operator 
in $(K_-, \, \{\cdot , \cdot\}_-)$
according to the generalized First Representation Theorem.
Now, the mapping $A \longrightarrow \st [\cdot,\cdot]$
defines a one-to-one correspondence between 
all J-non-negative, J-self-adjoint and boundedly inverible Kre\u{\i}n space operators
and all closed forms with a gap point $0$ (Theorem \ref{thm_1-1_J-non-neg}).

In the next step we consider the restriction of the inverse of this mapping to the form closures
of $(T \cdot , \cdot)$ with a fixed self-adjoint and boundedly invertible operator $T$ in
a Hilbert space $(H, \, (\cdot , \cdot))$ (Theorem \ref{thm_1-1_only_T}).
For the associated J-non-negative, J-self-adjoint and boundedly inverible operators infinity
is the only possible critical point
in the sense of Langer's theory of definitizable operators from \cite{L}.
There is precisely one {\it regular} closure $\st_0 [\cdot,\cdot]$
and this is characterized by the regularity of the critical point infinity of the associated Kre\u{\i}n space operator $A_0$ (Theorem \ref{thm_exceptual}).
This means that the eigenspectral function of $A_0$ according to \cite[Theorem II.3.1]{L}
is bounded near infinity.
Since for all other closures and operators we have non-regularity,
$\st_0 [\cdot,\cdot]$ and $A_0$ will be called {\it regularizations}
of the other closures and operators, respectively.
From the Kre\u{\i}n space point of view this regularization is a ``small'' modification of the operator and of the Kre\u{\i}n space as well.
Although this modification makes a big difference for the norm,
the eigenspectral functions themselves do not differ essentially.
More precisely, each eigenspectral function appears as the restriction of the spectral measure of $T$ to the form domain
(Theorem \ref{thm_spectralFunction}).
Furthermore, a consequence of \cite[Theorem 2.7]{CL2} is mentioned:
The form closure of $(T \cdot , \cdot)$ is unique if and only if $T$ is semibounded (Theorem \ref{thm_unique}).

Some of the above results do only make certain ideas more precise which had already been in the background of previous papers
(see again \cite{F7, FHS, FHSW2, FHSW3}).
However, when we leave this abstract level and turn to simple example operators
the difficulties increase in describing different form closures explicitly.
In a different framework such kind of problems is studied in \cite[Example 2.11, Proposition 4.2]{GKMV}.
Also from the Kre\u{\i}n space point of view so far there are not many examples of different completions known
(see e.g. \cite{H, CL2}).
Here, we first recall an example from \cite{F8}
where a non-regular closed form associated with an indefinite Sturm-Liouville operator is constructed.
Unfortunately, we cannot describe the regularization explicitly in this case.

In \cite[Theorem 5.2, proof part II]{CL2} infinitely many Kre\u{\i}n space completions are constructed on a certain abstract level.
Here, using some of these ideas 
we explicitly construct a family of infinitely many form closures or Kre\u{\i}n space completions
associated with the self-adjoint multiplication operator $(Tf)(x) := x f(x)$
in a model Hilbert space $L^2_{r_-}({\mathbb R})$
(Theorem \ref{thm_conclusions_KreinSpaceCompletion}, Corollary \ref{cor_ex_non-neg_closure}).
The inner product in this space is given by
\[
(f,g)_{r_-} := \int_{-\infty}^\infty f \overline{g} \, r_- \, dx, \quad (f, g \in L^2_{r_-}({\mathbb R}))
\]
where the weight function $r_- \in L_{loc}^1({\mathbb R})$ is non-negative, vanishes around $0$ and satisfies some additional conditions.
With the 
function $r(x) := x r_-(x)$ the regular closure of $(T \cdot , \cdot)_{r_-}$ is given by the Kre\u{\i}n space inner product
\[
\st_0 [f,g] := [f,g]_r := \int_{-\infty}^\infty f \overline{g} \, r \, dx, \quad (f, g \in \dom \st_0 := L^2_{r}({\mathbb R})).
\]
(Note that $r$ changes its sign.) Now, let $\alpha \in (0,2]$ and consider the functions
\[
\eta_{\alpha}(x) := (\sqrt{|x|^{\alpha} + 1} - \sqrt{|x|^{\alpha}})|r(x)|, \quad
\omega_{\alpha}(x) := \sqrt{|x|^{\alpha}} \, |r(x)| \quad (x \in {\mathbb R}).
\]
If we denote the even part of a function $f$ by $f_e$ and the odd part by $f_o$ then, for each $\alpha \in (0,2]$ an
additional non-regular closure can be defined on
\[
\dom \st_{\alpha} := \{f \in  L^2_{r_-}({\mathbb R}) \, | \, f_e \in  L^2_{\eta_{\alpha}}({\mathbb R}), \, f_o \in  L^2_{\omega_{\alpha}}({\mathbb R}) \} 
\]
by means of the Kre\u{\i}n space inner product
\[
\st_{\alpha}[f,g] := \lim_{k \rightarrow \infty} \int_{-k}^{k} f \overline{g} \, r  \, dx \quad (f, g \in \dom \st_{\alpha})
\]
(Proposition \ref{prop_t_alpha_KreinSpace}). 
It is observed that
this limit always exists.
Furthermore, for all $\alpha \in [0,2]$
the associated eigenspectral functions
are studied near infinity.

Finally, note that here we have an explicit example showing that
each of the infinitely many form closures carries more information ($r$ and $\alpha$)
than the original self-adjoint operator (only $r$).
This seems to imply that in the non-semibounded case 
the concept of closed symmetric sesquilinear forms is more suitable 
than the concept of self-adjoint Hilbert space operators. 
An interpretation in physics would be interesting.

\section{Preliminaries}\label{preliminaries}

For further use we recall some basic definitions and facts from known theories.

\subsection{J-non-negative and boundedly invertible operators in Kre\u{\i}n spaces}\label{subsec_preliminaries_definitizable}

An indefinite inner product space $(K, \, [\cdot , \cdot])$ is called a {\it Kre\u{\i}n space}
if it allows a so-called {\it fundamental decomposition} $K = K^+ \oplus K^-$
with a direct and orthogonal (with respect to $[\cdot , \cdot]$) sum of
a Hilbert space $(K^+, \, [\cdot , \cdot])$ and an anti Hilbert space $(K^-, \, [\cdot , \cdot])$.
If $P^{\pm}$ denotes the orthogonal projection on $K^{\pm}$ then $J := P^+ - P^-$
is called a {\it fundamental symmetry}
and $\{\cdot , \cdot\} := [J\cdot , \cdot]$ defines a positive definite inner product
such that $(K, \, \{\cdot , \cdot\})$ is a Hilbert space.
The induced topology also serves as the topology of the Kre\u{\i}n space.
A densely defined operator $A$ in the Kre\u{\i}n space $(K, \, [\cdot , \cdot])$ is called
{\it J-self-adjoint} or {\it J-non-negative} if $JA$ has the corresponding property in the Hilbert space $(K, \, \{\cdot , \cdot\})$.
Note that these definitions do not depend on the choice of $J$.
Furthermore, a {\it Kre\u{\i}n space} $(K, \, [\cdot , \cdot])$ is called a
{\it Kre\u{\i}n space completion} of an inner product space $(\widetilde{K}, \, [\cdot , \cdot\tilde{]})$
if $\widetilde{K}$ is a dense subspace of $K$ and $[\cdot , \cdot]$ and $[\cdot , \cdot\tilde{]}$
coincide on $\widetilde{K}$.
For more details on Kre\u{\i}n spaces we refer to \cite{AI}.

According to \cite{L} a J-self-adjoint operator $A$ is called {\it definitizable} if 
the resolvent set $\rho (A)$ is not empty
and there is a real so-called {\it definitizing polynomial} $p$ such that $p(A)$ is J-non-negative.
The zeroes of $p$ which also belong to the real part of the spectrum $\sigma (A) \cap {\mathbb R}$ 
are called {\it critical points of} $A$.
Additionally, $\infty$ is called a {\it critical point of} $A$
if $p$ has an odd degree (i.e. $p$ has a sign change at $\infty$) and $\sigma (A) \cap {\mathbb R}$ 
is neither bounded from above nor from below.

In the following we restrict ourselves to the case of a J-self-adjoint operator $A$
such that $A - \lambda$ is J-non-negative and boundedly invertible for some $\lambda \in {\mathbb R}$.
Then clearly, $A$ is definitizable with definitizing polynomial $p(t) = t - \lambda$.
Furthermore, $\infty$ is the only possible critical point of $A$.

Now, we recall Langer's result from \cite[Theorem II.3.1]{L} on eigenspectral functions
for the case of a J-self-adjoint, J-non-negative and boundedly invertible operator $A$ in a Kre\u{\i}n space $(K, \, [\cdot , \cdot])$
(i.e. $\lambda = 0$).
To this end let $\Sigma$ denote the semiring of all bounded intervalls and its complements in ${\mathbb R}$.
A mapping $E$ from $\Sigma$ to $L(K)$ (the space of all bounded operators in $(K, \, [\cdot , \cdot])$) is constructed
which is called the {\it eigenspectral function} of $A$.

For a bounded interval $\emptyset \neq \Delta \in \Sigma$ and $f \in K$ the limit
\begin{equation}\label{EigenspectralFunction}
E(\Delta)f := \lim_{\epsilon \searrow 0} \; \lim_{\delta \searrow 0} -\frac{1}{2 \pi i} \int_{C^{\delta}_{\Delta_{\epsilon}}} (A - \lambda)^{-1}f \; d \lambda
\end{equation}
exists  in $(K, \, [\cdot , \cdot])$ where $C^{\delta}_{\Delta_{\epsilon}}$ is defined for $\delta, \epsilon \in (0, 1)$ in the following way:
For $\Delta = [\alpha, \beta]$ with $-\infty < \alpha < \beta < \infty$ consider the positive oriented curve $C_{\Delta}$ in ${\mathbb C}$
consisting of the line segments which connect the points
\[
\beta + i, \quad \alpha + i, \quad \alpha - i, \quad \beta -  i, \quad \beta + i.
\]
In order to define $C^{\delta}_{\Delta}$ we take off from $C_{\Delta}$ the segments between
\[
\alpha + i \delta, \quad \alpha - i \delta, \quad \mbox{and} \quad \beta -  i \delta, \quad \beta + i \delta.
\]
For an arbitrary bounded interval $\emptyset \neq \Delta \in \Sigma$ with endpoints $\alpha \leq \beta$ put
\[
\Delta_{\epsilon} := [\alpha \mp \epsilon, \beta \pm \epsilon] \quad ( := \emptyset \mbox{ if } \alpha \mp \epsilon > \beta \pm \epsilon)
\]
where the upper (lower) sign must be taken whenever the corresponding endpoint is (not, respectively) a part of $\Delta$.
Furthermore, put $E(\emptyset) := 0$ and $E(\Delta) := I_K - E({\mathbb R} \setminus \Delta)$
for an unbounded $\Delta \in \Sigma$ where $I_K$ denotes the identity on $K$.
Then, the following Theorem is obtained from \cite[Theorem II.3.1]{L}
(see also 
\cite[Appendix B]{F2}).
\begin{thm}[Langer]\label{thm_EigenspectralFunction}
Let $A$ be a J-self-adjoint, J-non-negative and boundedly invertible operator in the Kre\u{\i}n space $(K, \, [\cdot , \cdot])$
and let $E(\Delta)$ be given as constructed above for $\Delta \in \Sigma$.
Then, $E$ defines a mapping from $\Sigma$ to $L(K)$
with the following properties ($\Delta, \Delta' \in \Sigma$):
\begin{eqnarray}
& & E(\Delta) \mbox{ is J-self-adjoint},                       \nonumber      \\
& & E(\Delta)E(\Delta') = E(\Delta \cap \Delta'),           \nonumber   \\
& & E(\Delta \cup \Delta')= E(\Delta) + E(\Delta')    \mbox{ if } \Delta \cup \Delta' \in \Sigma, \; \Delta \cap \Delta' = \emptyset,     \nonumber   \\
& & E({\mathbb R}) = I_K, \; E(\emptyset) = 0                  \nonumber      \\
& & E(\Delta)  \mbox{ is J-non-negative, if } \overline{\Delta} \subset (0,\infty),           \nonumber          \\
& & E(\Delta)  \mbox{ is J-non-positive, if } \overline{\Delta} \subset (-\infty,0),             \nonumber        \\
& & \mbox{if  } \Delta \mbox{ is bounded then }               \nonumber      \\
& & \quad E(\Delta)K \subset \dom A, \quad A E(\Delta) f =  E(\Delta) A f \quad (f \in \dom A)    \nonumber  \\
& & \quad \mbox{and the restriction } A|_{E(\Delta)K} \mbox{ is bounded},         \nonumber    \\
& & \sigma ( A|_{E(\Delta)K} ) \subset  \overline{\Delta}                            \nonumber
\end{eqnarray}
where $\sigma$ denotes the spectrum. 
\end{thm}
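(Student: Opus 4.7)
The plan is to follow Langer's contour-integration construction from \cite{L}, specialized to the present simpler situation: since $A$ is J-non-negative and boundedly invertible, $p(t)=t$ is a definitizing polynomial, so $\infty$ is the only possible critical point. First I would establish existence of the double limit in (\ref{EigenspectralFunction}). The inner $\delta\searrow 0$ limit rests on a local resolvent growth estimate of the form $\|(A-\mu)^{-1}\|\le C/|\mathrm{Im}\,\mu|$ on compact subsets of ${\mathbb R}\setminus\{0\}$, available from the J-non-negativity of $A$; this makes the horizontal pieces of $C^\delta_{\Delta_\epsilon}$ absolutely integrable and shows that the two short vertical segments near each endpoint contribute a uniformly bounded remainder that cancels in the limit. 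The outer $\epsilon\searrow 0$ limit then accommodates endpoints that may lie in $\sigma(A)$, with $\Delta_\epsilon$ shrinking to $\Delta$ according to the open/closed character of each endpoint.

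Next I would verify the algebraic identities. J-self-adjointness of $E(\Delta)$ follows by reflecting the contour across the real axis and using $((A-\lambda)^{-1})^{[*]}=(A-\overline{\lambda})^{-1}$. Additivity on disjoint intervals $\Delta\cup\Delta'\in\Sigma$ is immediate from additivity of contour integrals, $E(\emptyset)=0$ is by definition, and the definitional extension to unbounded intervals forces $E({\mathbb R})=I_K$. The multiplicativity $E(\Delta)E(\Delta')=E(\Delta\cap\Delta')$ is the standard Dunford--Riesz argument: apply the first resolvent identity to the double contour integral and deform non-intersecting pieces through regions of analyticity so that only the overlap $\Delta\cap\Delta'$ survives. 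Commutation of $A$ with $E(\Delta)$ and the inclusion $E(\Delta)K\subset\dom A$ for bounded $\Delta$ follow because $A$ commutes with each resolvent under the integral sign; boundedness of $A|_{E(\Delta)K}$ and the spectral inclusion $\sigma(A|_{E(\Delta)K})\subset\overline\Delta$ are then immediate consequences of the resulting Riesz-type functional calculus.

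The sign properties are where the J-non-negativity of $A$ enters essentially. For bounded $\Delta$ with $\overline\Delta\subset(0,\infty)$ the restriction $A|_{E(\Delta)K}$ is a bounded J-non-negative operator whose spectrum lies in $(0,\infty)$, so it admits a bounded J-non-negative square root $B$ by Kre\u{\i}n space functional calculus. Since every $g\in E(\Delta)K$ can be written $g=Bh$ with $h=B^{-1}g$, J-self-adjointness of $B$ yields $[g,g]=[B^2h,h]=[Ah,h]\ge 0$, so $E(\Delta)K$ is a J-non-negative subspace and $E(\Delta)$ is J-non-negative. The case $\overline\Delta\subset(-\infty,0)$ is handled symmetrically by working with $-A$.

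The main obstacle I would expect is the uniform control near the critical point $\infty$: one has to show that the cancellation in (\ref{EigenspectralFunction}) really produces a bounded operator uniformly in $\delta,\epsilon$, which requires the fine resolvent information encoded in definitizability, and one must verify that extending $E$ to unbounded intervals via the complement is compatible with additivity, multiplicativity, and the sign properties. In our setting this reduces to controlling $E([-n,n])$ as $n\to\infty$; regularity of $\infty$ would give uniform boundedness directly, whereas in the singular case one needs the more delicate estimates at critical points developed in \cite{L}.
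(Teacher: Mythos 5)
First, a point of reference: the paper does not prove this theorem at all --- it is quoted from Langer \cite[Theorem II.3.1]{L} (with \cite[Appendix B]{F2} as a secondary source), so your attempt can only be measured against Langer's original argument, whose overall architecture your outline does follow. The algebraic part of your sketch (J-self-adjointness by reflecting the contour and using $((A-\lambda)^{-1})^{[*]}=(A-\overline{\lambda})^{-1}$, multiplicativity via the first resolvent identity, commutation with $A$ and the spectral inclusion from the Riesz calculus, and the sign properties obtained from a J-self-adjoint square root of $A|_{E(\Delta)K}$ together with $[E(\Delta)f,f]=[E(\Delta)f,E(\Delta)f]$) is correct and is essentially the standard route.

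The genuine gap is in the analytic core, namely the existence of the limit (\ref{EigenspectralFunction}), and your justification of it does not work as stated. You misread the contour: the horizontal edges of $C^{\delta}_{\Delta_{\epsilon}}$ sit at height $\pm i$, where the resolvent is bounded and nothing needs to be integrable; the $\delta$-limit concerns the vertical edges $\{\alpha'\pm is:\ \delta\le s\le 1\}$ crossing the real axis at the (possibly spectral) endpoints $\alpha'$ of $\Delta_{\epsilon}$, and the ``short vertical segments near each endpoint'' are exactly the pieces \emph{removed} from the contour, not a remainder that cancels. More importantly, the estimate $\|(A-\mu)^{-1}\|\le C/|\mathrm{Im}\,\mu|$ --- which is indeed available here, since $0\in\rho(A)$ leaves $\infty$ as the only possible critical point --- does not yield absolute integrability of anything: along a vertical edge it only gives $\int_{\delta}^{1}\|(A-\alpha'-is)^{-1}f\|\,ds=O(\log(1/\delta))$. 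Convergence as $\delta\searrow 0$ requires the cancellation between the $+is$ and $-is$ contributions, i.e.\ a Stieltjes-inversion argument applied to $\lambda\mapsto[(A-\lambda)^{-1}f,g]$, which Langer extracts from a Nevanlinna-type integral representation of the resolvent of a definitizable operator; that representation is the real content of the construction and is nowhere supplied in your sketch. Finally, your closing paragraph conflates two issues: uniform control of $E([-n,n])$ as $n\to\infty$ governs whether $\infty$ is a \emph{singular} critical point, but it is not needed for the theorem as stated, which only asserts that each individual $E(\Delta)$, $\Delta\in\Sigma$, is a bounded operator.
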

According to \cite[Section II.5]{L} $\infty$ is called a {\it singular critical point} of $A$
if for some $\varepsilon > 0$ and some $f \in K$
one of the limits
\begin{equation}\label{def_singularCritPt}
\lim_{\lambda \rightarrow \infty} E[\varepsilon, \lambda]f \quad \mbox{and} \quad \lim_{\lambda \rightarrow \infty} E[-\lambda, -\varepsilon]f
\end{equation}
does not exist with convergence in $(K, \, [\cdot , \cdot])$.
By \cite[Proposition II.5.6]{L} this is equivalent to the property that the operator norms
$||E[\varepsilon, \lambda]||$ or $||E[-\lambda, -\varepsilon]||$ are unbounded for $\lambda \rightarrow \infty$.

\subsection{Closed symmetric sesquilinear forms}\label{subsec_preliminaries_forms}

Let $\st [\cdot,\cdot]$ be a densely defined symmetric sesquilinear form
(short only {\it form})
in a Hilbert space $(H, \, (\cdot , \cdot))$ with domain $\dom \st$.
Then, $\st [\cdot,\cdot]$ is semibounded from below
if for some $\lambda \in {\mathbb R}$ the form
\begin{equation}\label{form_lambda}
\st_{\lambda} [\cdot,\cdot] := \st [\cdot,\cdot] - \lambda (\cdot,\cdot)
\end{equation}
is non-negative, i.e. $\st_{\lambda} [f,f] \geq 0$ for all $f \in \dom \st$.
Furthermore, according to \cite[Theorem VI 1.11]{Kato} the form $\st [\cdot,\cdot]$ is closed if and only if
$(\dom \st, \, \st_{\lambda}[\cdot,\cdot])$ is a Hilbert space which is continuously embedded in $(H, \, (\cdot , \cdot))$
for some $\lambda \in {\mathbb R}$.
The setting is similar if $\st [\cdot,\cdot]$ is semibounded from above, 
i.e. $-\st [\cdot,\cdot]$ is semibounded from below.

However, in the following we do not assume that $\st [\cdot,\cdot]$ is semibounded (from below or above).
Using (\ref{form_lambda}),
according to \cite{FHS} the form $\st [\cdot,\cdot]$ is called {\it closed}
if $(\dom \st, \, \st_{\lambda}[\cdot,\cdot])$ is a {\it Kre\u{\i}n space} which is continuously embedded in $(H, \, (\cdot , \cdot))$
for some $\lambda \in {\mathbb R}$ (called {\it gap point} of $\st [\cdot,\cdot]$).
A closed extension $\st [\cdot,\cdot]$ of a form $\tilde{\st} [\cdot,\cdot]$ is called a {\it closure} of $\tilde{\st} [\cdot,\cdot]$
if $\dom \tilde{\st}$ is dense in the Kre\u{\i}n space $(\dom \st, \, \st_{\lambda}[\cdot,\cdot])$ for some gap point $\lambda$.
These definitions do not depend on the choice of the gap point (cf. \cite[Lemma 3.1]{FHS})
and are obviously generalizations from the semibounded case.
We recall the following generalizations of Kato's
Representation Theorems \cite[Theorem VI-2.1, Theorem VI-2.23]{Kato}
according to \cite[Theorem 3.3, Theorem 4.2]{FHS}.
\begin{thm}[First Representation Theorem]\label{first}
Let $\st [\cdot,\cdot]$ be a 
closed symmetric
sesquilinear form in the Hilbert space $(H, \,  (\cdot,\cdot))$.
Then there exists a unique self-adjoint operator $T_{\st}$ in $(H,
\, (\cdot,\cdot))$ such that $\dom T_{\st} \subset \dom \st$ and
\[
\st[u, v] = (T_{\st} u, v) ,   \quad u \in \dom T_{\st}, \quad v \in
\dom \st.
\]
All gap points $\lambda$ of $\st [\cdot,\cdot]$ belong to the resolvent set of $T_{\st}$
and $\dom T_{\st}$ is dense in the Kre\u{\i}n space $(\dom \st, \, \st_{\lambda}[\cdot,\cdot])$.
\end{thm}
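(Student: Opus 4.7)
The plan is to follow Kato's classical strategy as closely as possible, the only essential change being that the Hilbert-space Riesz lemma is replaced by its Kreĭn-space analogue on $\dom \st$. First I would fix a gap point $\lambda \in {\mathbb R}$, so that $\mathcal{K}_\lambda := (\dom \st, \, \st_{\lambda}[\cdot,\cdot])$ is a Kreĭn space continuously and densely embedded in $(H,\,(\cdot,\cdot))$. Choosing a fundamental symmetry $J$ on $\mathcal{K}_\lambda$ produces a compatible Hilbert inner product $\{u,v\}_\lambda := \st_\lambda[Ju,v]$ inducing the Kreĭn-space topology. For each $f \in H$, the functional $v \mapsto (f,v)$ is continuous on $\mathcal{K}_\lambda$ by the continuous embedding, so Hilbert-space Riesz in $\{\cdot,\cdot\}_\lambda$ followed by an application of $J$ yields a unique vector $R_\lambda f \in \dom \st$ with
\[
\st_\lambda[R_\lambda f, v] = (f,v) \quad \mbox{for all } v \in \dom \st,
\]
and $R_\lambda$ is a bounded operator from $H$ into $\mathcal{K}_\lambda$, hence also as a map $H \to H$.

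Next, using conjugate symmetry of $\st_\lambda$, a short calculation of the form $(R_\lambda f, g) = \st_\lambda[R_\lambda f, R_\lambda g] = (f, R_\lambda g)$ shows that $R_\lambda$ is self-adjoint on $H$. Injectivity is immediate: $R_\lambda f = 0$ forces $(f,v) = 0$ on the dense subspace $\dom \st \subset H$. I would then set $\dom T_\st := R_\lambda H \subset \dom \st$ and $T_\st := \lambda + R_\lambda^{-1}$; since the inverse of an injective bounded self-adjoint operator is itself self-adjoint, $T_\st$ is self-adjoint in $H$, and rewriting the Riesz identity gives $\st[u,v] = (T_\st u, v)$ on the required domains. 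Because $R_\lambda = (T_\st - \lambda)^{-1} \in L(H)$, one reads off $\lambda \in \rho(T_\st)$; as the same construction may be performed at any gap point, every gap point lies in $\rho(T_\st)$. Uniqueness is the standard argument: any competitor $T'$ satisfies $((T' - T_\st)u,v) = 0$ for every $v \in \dom \st$, density of $\dom \st$ in $H$ gives $T' \subset T_\st$, and maximality of self-adjoint operators forces equality.

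For the density of $\dom T_\st$ in $\mathcal{K}_\lambda$, I would argue by non-degeneracy: if $u \in \dom \st$ satisfies $\st_\lambda[R_\lambda f, u] = 0$ for every $f \in H$, then $(f,u) = 0$ for every such $f$, so $u = 0$ in $H$ and therefore in $\dom \st$ by the injective embedding. The main obstacle is the Riesz step, because $\st_\lambda[\cdot,\cdot]$ is indefinite: one must pass through an auxiliary fundamental symmetry $J$ and then verify that $R_\lambda$, and hence $T_\st$, are independent of that choice. The key point is that every identity used to define $T_\st$ and to verify its self-adjointness is written in terms of the intrinsic form $\st_\lambda$ alone, and this is what makes the representing operator canonical.
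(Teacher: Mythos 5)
The paper does not prove this theorem itself --- it is recalled from \cite[Theorem 3.3]{FHS} without proof --- so there is no in-paper argument to compare against; I am judging your proposal on its own. Your argument is the standard Kre\u{\i}n-space adaptation of Kato's proof and is essentially correct: the Riesz step through a fundamental symmetry $J$, the self-adjointness and injectivity of $R_\lambda$, and the density of $\dom T_{\st}$ via non-degeneracy of $\st_\lambda$ all work as stated, and the independence of the choice of $J$ is indeed automatic because $R_\lambda f$ is characterized by the intrinsic identity $\st_\lambda[R_\lambda f,v]=(f,v)$ for all $v\in\dom\st$. One step should be tightened: in your uniqueness argument the expression $((T'-T_{\st})u,v)$ presupposes $u\in\dom T_{\st}$, which is not yet known for $u\in\dom T'$. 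The clean route is to take $u\in\dom T'$, set $f:=(T'-\lambda)u$, note that $\st_\lambda[u,v]=(f,v)$ for all $v\in\dom\st$, and conclude from the uniqueness of the Riesz representative that $u=R_\lambda f\in\dom T_{\st}$ with $T_{\st}u=T'u$; hence $T'\subset T_{\st}$ and self-adjointness of both forces equality. This corrected uniqueness step is also what justifies your final claim that the construction performed at any other gap point $\mu$ returns the same operator, whence every gap point lies in $\rho(T_{\st})$.
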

Associated with a closed form $\st [\cdot,\cdot]$ is also the range restriction $A_\st$
\begin{equation}
\label{rangerestriction} 
\dom A_{\st} := T_{\st}^{-1}(\dom \st), \qquad A_\st f := T_{\st} f \quad (f \in \dom A_{\st})
\end{equation}
of $T_{\st}$ to $\dom \st$. 
If $\lambda \in {\mathbb R}$ is a gap point of $\st [\cdot,\cdot]$
then by \cite[Lemma 4.1]{FHS} $A_\st$ 
is $J$-self-adjoint and $A_{\st} - \lambda$ is J-non-negative and boundedly invertible in the Kre\u{\i}n
space $(\dom \st, \, \st_\lambda [\cdot,\cdot])$.
Consequently, $A_{\st}$ is definitizable and $\infty$ is the only possible critical point of $A_{\st}$.
\begin{thm}[Second Representation Theorem]\label{second}
Let $\st [\cdot,\cdot]$ be a closed symmetric
sesquilinear form in the Hilbert space $(H, \,  (\cdot,\cdot))$
and let $T_{\st}$ and $A_\st$ be the associated operators. Then
\begin{equation}\label{regular}
\dom \st = \dom |T_\st|^\frac{1}{2}
\end{equation}
if and only if $\infty$ is not a singular critical point of $A_\st$.
In this case the topology of the Kre\u{\i}n space 
$(\dom \st, \, \st_\lambda [\cdot,\cdot])$
is induced by the inner product
$(|T_\st - \lambda|^\frac{1}{2} \; \cdot \; ,|T_\st -
\lambda|^\frac{1}{2} \; \cdot \; )$  for each gap point $\lambda \in {\mathbb R}$
\end{thm}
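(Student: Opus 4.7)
The plan is to first normalize the situation by choosing a gap point $\lambda \in {\mathbb R}$ and replacing $\st$ by $\st_\lambda$ and $T_\st$ by $T_\st - \lambda$; this does not change which functional-calculus statements hold and reduces matters to the case where $A_\st$ is J-self-adjoint, J-non-negative and boundedly invertible in the Kre\u{\i}n space $(\dom \st, \, \st[\cdot,\cdot])$, so Theorem \ref{thm_EigenspectralFunction} applies directly.

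The central bridge between the two representations is to identify the eigenspectral function $E$ of $A_\st$ with the restriction to $\dom \st$ of the Hilbert-space spectral measure $E^T$ of $T_\st$. For any $\mu \in {\mathbb C} \setminus {\mathbb R}$ and $f \in \dom \st$, one has $(A_\st - \mu)^{-1} f = (T_\st - \mu)^{-1} f$ (since $A_\st$ is just the range-restriction of $T_\st$ to $\dom \st$ and every non-real $\mu$ is a gap point, hence in $\rho(T_\st)$ by Theorem \ref{first}). Plugging this identity into the contour formula \eqref{EigenspectralFunction} and comparing with Stone's formula for $E^T$ yields $E(\Delta)f = E^T(\Delta)f$ for $f \in \dom \st$ and every bounded $\Delta \in \Sigma$.

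For the \emph{if} direction, assume $\infty$ is not a singular critical point of $A_\st$. Then by \cite[Proposition II.5.6]{L} the norms $\|E[\varepsilon,n]\|$ and $\|E[-n,-\varepsilon]\|$ stay bounded as $n \to \infty$, so the spectral integral defining $|A_\st|^{1/2}$ converges in the Kre\u{\i}n space topology on all of $\dom \st$. Using the bridge of the previous paragraph, this operator coincides with the restriction of $|T_\st|^{1/2}$ to $\dom \st$, and the Kre\u{\i}n space norm on $\dom \st$ is equivalent to $(|T_\st|^{1/2} \cdot, |T_\st|^{1/2} \cdot) + (\cdot,\cdot)$ with fundamental symmetry $\sgn A_\st$. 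Hence $\dom \st \subset \dom |T_\st|^{1/2}$; the reverse inclusion is obtained by observing that the form $(\sgn T_\st |T_\st|^{1/2} \cdot, |T_\st|^{1/2} \cdot)$ on $\dom |T_\st|^{1/2}$ is itself a closure of $(T \cdot,\cdot)$ that extends $\st$, and then applying the closed graph theorem (the identity map between the two Kre\u{\i}n space topologies on $\dom \st$ is continuous) to conclude equality of the domains and the stated expression for the topology.

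For the \emph{only if} direction, suppose $\dom \st = \dom |T_\st|^{1/2}$. The right-hand side inherits a natural Hilbert graph norm from $|T_\st|^{1/2}$ which is continuously embedded in $H$; since the Kre\u{\i}n space topology of $\st_\lambda$ also embeds continuously in $H$ and both topologies represent the same $T_\st$, the closed graph theorem forces the two topologies on $\dom \st$ to coincide. Under this identification the projections $E[\varepsilon,n]$ of $A_\st$ correspond, via the bridge, to $E^T[\varepsilon,n]$ acting boundedly on $\dom |T_\st|^{1/2}$ (with norm controlled by $(1 + n)$ times a constant in the $|T_\st|^{1/2}$-graph norm, but on the spectral subspace this refines to a uniform bound because $|T_\st|^{1/2}$ commutes with $E^T[\varepsilon,n]$). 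Therefore $\|E[\varepsilon,n]\|$ stays bounded, and $\infty$ is not a singular critical point.

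The main obstacle is carrying out the identification $E(\Delta) = E^T(\Delta)|_{\dom \st}$ rigorously at the level of contour integrals (one must justify the passage from the $\delta$-indented Langer contour in \eqref{EigenspectralFunction} to Stone's formula in $H$, using that strong convergence in $\dom \st$ is stronger than strong convergence in $H$), because everything downstream, including the matching of the two functional calculi on the positive and negative spectral subspaces, rests on this step.
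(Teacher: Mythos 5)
First, note that the paper itself gives no proof of Theorem \ref{second}: it is quoted verbatim from \cite[Theorem 4.2]{FHS}, whose proof in turn rests on \'Curgus' characterization of regularity of the critical point $\infty$ in \cite{C1}. So your attempt has to be judged on its own. Your ``bridge'' $E(\Delta)=E^T(\Delta)|_{\dom\st}$ is legitimate and is exactly the paper's own Theorem \ref{thm_spectralFunction} (Stone's formula plus the coincidence of resolvents on $\dom\st$), and your \emph{only if} direction is essentially sound: once $\dom\st=\dom|T_\st|^{\frac12}$, the closed graph theorem identifies the Kre\u{\i}n space topology with the $|T_\st|^{\frac12}$-graph topology, the spectral projections $E^T(\Delta)$ are uniform contractions in that graph norm because they commute with $|T_\st|^{\frac12}$, and \cite[Proposition II.5.6]{L} then rules out a singular critical point at $\infty$. (A small slip: non-real $\mu$ lie in $\rho(T_\st)$ because $T_\st$ is self-adjoint, not because they are ``gap points''; gap points are real by definition.)

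The genuine gap is in the \emph{if} direction, at the sentence claiming that boundedness of $\|E[\varepsilon,n]\|$ implies that ``the spectral integral defining $|A_\st|^{1/2}$ converges in the Kre\u{\i}n space topology on all of $\dom\st$.'' That is false as stated: uniform boundedness of the spectral projections never forces $\int|\lambda|^{1/2}\,dE(\lambda)f$ to converge for \emph{every} $f$ in the space (it would converge only for $f$ in the domain of the square root of the Kre\u{\i}n space operator, a proper subspace in general), and the subsequent identification of this object with $|T_\st|^{1/2}|_{\dom\st}$ presupposes the inclusion $\dom\st\subset\dom|T_\st|^{\frac12}$, which is precisely what has to be proved. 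What is actually needed here is an argument of the following type: non-singularity of $\infty$ gives the strong limits $P^{\pm}:=\lim_{n}E(\pm[\varepsilon,n])$ and hence a fundamental symmetry $J_0=P^+-P^-$ commuting with $A_\st$; then, using $d[E(\lambda)f,f]=\lambda\,d(E^T(\lambda)f,f)$ (which follows from the First Representation Theorem and the bridge), one gets
$\int_{|\lambda|\le n}|\lambda|\,d(E^T(\lambda)f,f)=\st[E([0,n])f,f]-\st[E([-n,0))f,f]\to \st[J_0f,f]<\infty$,
so that $f\in\dom|T_\st|^{\frac12}$ with $\||T_\st|^{\frac12}f\|^2=\st[J_0f,f]$; the reverse inclusion then indeed follows from Proposition \ref{form_subset} (or Lemma \ref{two_forms_coincide}). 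Without some such computation — or an explicit appeal to \cite[C1]{C1} — the forward implication, which is the real content of the theorem, is not established.
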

According to \cite{FHS} a closed symmetric form $\st [\cdot,\cdot]$ is said to be
\textit{regular} if (\ref{regular}) is satisfied. 
In this case also a representation of $\st [\cdot,\cdot]$ by means of $|T_\st|^\frac{1}{2}$
in analogy to the classical Second Representation Theorem \cite[Theorem VI-2.23]{Kato}
can be found in \cite[Theorem 4.2]{FHS}.
The following result from \cite[Theorem 5.2]{FHS}
was already mentioned in the Introduction:
\begin{thm}\label{one-to-one_regular}
The mapping $\st[\cdot,\cdot] \to T_{\st}$ defines a one-to-one
correspondence between all regular 
closed symmetric forms in $(H,\,(\cdot,\cdot))$ and all self-adjoint operators in
$(H,\,(\cdot,\cdot))$ with spectrum different from the whole real
axis ${\mathbb R}$.
\end{thm}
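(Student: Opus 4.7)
The plan is to show that the map $\st[\cdot,\cdot] \mapsto T_\st$ is well-defined with range inside the self-adjoint operators of non-full real spectrum, then establish injectivity, and finally surjectivity by an explicit construction from the spectral calculus of a given self-adjoint operator.

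\emph{Well-definedness and injectivity.} If $\st[\cdot,\cdot]$ is a regular closed symmetric form, Theorem \ref{first} produces a unique self-adjoint $T_\st$, and every gap point of $\st$ lies in $\rho(T_\st)$; since a closed form admits at least one gap point, $\sigma(T_\st) \neq {\mathbb R}$. For injectivity, suppose two regular closed forms $\st_1, \st_2$ satisfy $T_{\st_1} = T_{\st_2} =: T$. Regularity and Theorem \ref{second} give $\dom \st_i = \dom |T|^{1/2}$ for both, so the domains agree, and the Kre\u{\i}n-space topology on each is induced by $(|T-\lambda|^{1/2}\cdot,|T-\lambda|^{1/2}\cdot)$ for any common real gap point $\lambda$. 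The representation of each $\st_i$ via $|T|^{1/2}$ (the analogue of Kato's classical Second Representation Theorem referenced in \cite[Theorem 4.2]{FHS}) then depends only on $T$, forcing $\st_1 = \st_2$.

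\emph{Surjectivity.} Given self-adjoint $T$ in $(H,(\cdot,\cdot))$ with $\sigma(T) \neq {\mathbb R}$, pick $\lambda \in \rho(T) \cap {\mathbb R}$ and define $J_\lambda := \sgn(T-\lambda)$ by Borel calculus. Set
\[
\dom \st := \dom |T-\lambda|^{1/2}, \qquad \st[u,v] := (|T-\lambda|^{1/2} u,\, J_\lambda |T-\lambda|^{1/2} v) + \lambda\,(u,v).
\]
One checks in turn: (a) $\st$ is symmetric and sesquilinear, since $J_\lambda$ is a bounded self-adjoint involution commuting with $|T-\lambda|^{1/2}$; (b) the associated form $\st_\lambda$ makes $(\dom \st, \st_\lambda)$ a Kre\u{\i}n space with fundamental decomposition $E_T((\lambda,\infty))H \oplus E_T((-\infty,\lambda))H$, continuously embedded in $H$ because $\st_\lambda$ coincides with the graph norm of $|T-\lambda|^{1/2}$ on the positive part and its negative on the negative part; (c) regularity is built in, as $\dom \st = \dom |T-\lambda|^{1/2} = \dom |T|^{1/2}$ (the latter since $\lambda \in \rho(T) \cap {\mathbb R}$); (d) for $u \in \dom T$, $v \in \dom \st$ the spectral calculus gives $\st[u,v] = (Tu,v)$, so by uniqueness in Theorem \ref{first} we conclude $T_\st = T$.

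\emph{Main obstacle.} The delicate step is verifying that the abstract operator produced by Theorem \ref{first} from the constructed $\st$ is \emph{exactly} $T$, not a proper extension or restriction. This requires that $\dom T$ be dense in the Kre\u{\i}n space $(\dom \st, \st_\lambda)$, which follows from the density of $\dom T = \dom (T-\lambda)$ in $\dom |T-\lambda|^{1/2}$ with respect to the graph norm of $|T-\lambda|^{1/2}$, together with the observation that this graph norm dominates the Kre\u{\i}n-space norm $\{\cdot,\cdot\}_\lambda = [J \,\cdot,\cdot]$ on each spectral subspace. Once density is in hand, the identity $\st[u,v]=(Tu,v)$ on $\dom T \times \dom \st$ and the uniqueness clause of Theorem \ref{first} close the argument.
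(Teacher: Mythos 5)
Your argument is correct, but note that the paper itself offers no proof of this statement: Theorem \ref{one-to-one_regular} is simply recalled from \cite[Theorem 5.2]{FHS}, so any comparison is with that cited source rather than with an argument in the present text. Your reconstruction follows the standard route. Well-definedness is immediate from Theorem \ref{first} as you say. For injectivity you invoke the representation of a regular form by means of $|T_\st|^{1/2}$ from \cite[Theorem 4.2]{FHS}; this works, but the paper's own later machinery gives a shorter path: by Theorem \ref{second} both forms have domain $\dom|T|^{1/2}$, and Lemma \ref{two_forms_coincide} (two closed forms with the same representing operator and nested domains coincide) then yields $\st_1[\cdot,\cdot]=\st_2[\cdot,\cdot]$ without appealing to the explicit second-representation formula. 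Your surjectivity construction via $J_\lambda=\sgn(T-\lambda)$ and $\st[u,v]=(|T-\lambda|^{1/2}u,J_\lambda|T-\lambda|^{1/2}v)+\lambda(u,v)$ is sound: the identity $\st[u,v]=(Tu,v)$ on $\dom T\times\dom\st$, the density of $\dom T$ (a core for $|T-\lambda|^{1/2}$) in the Kre\u{\i}n space, and the uniqueness clause of Theorem \ref{first} do force $T_\st=T$, and $\dom|T-\lambda|^{1/2}=\dom|T|^{1/2}$ holds for every real $\lambda$ by the spectral calculus, so regularity follows once $T_\st=T$ is established (logically that identification should precede the regularity claim, a harmless reordering of your steps (c) and (d)). The only point worth making fully explicit is that the continuous embedding of $(\dom\st,\st_\lambda[\cdot,\cdot])$ in $(H,(\cdot,\cdot))$ comes from the boundedness of $|T-\lambda|^{-1/2}$, i.e. $\|u\|\leq\||T-\lambda|^{-1/2}\|\,\||T-\lambda|^{1/2}u\|$, rather than merely from the identification of the majorant norm on the two spectral subspaces.
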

By \cite[Proposition 2.5]{FHSW3} the definition of regularity can be weakened:
\begin{prop}\label{form_subset}
Let $\st [\cdot,\cdot]$ be a closed symmetric
sesquilinear form in the Hilbert space $(H, \,  (\cdot,\cdot))$
and let $T_{\st}$ be the associated operator. Then the following statements are equivatent:
\begin{enumerate}
\item[(i)] $\dom \st = \dom |T_\st|^\frac{1}{2}$,
\item[(ii)] $\dom \st \subset \dom |T_\st|^\frac{1}{2}$,
\item[(iii)] $\dom \st \supset \dom |T_\st|^\frac{1}{2}$.
\end{enumerate}
\end{prop}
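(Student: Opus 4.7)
The implications (i)$\Rightarrow$(ii) and (i)$\Rightarrow$(iii) are immediate, so the task is to prove the converses. The natural strategy is to exploit the uniqueness of the regular closure: Theorem~\ref{one-to-one_regular} provides a unique regular closed symmetric form $\st_r$ with $T_{\st_r}=T_\st$, and Theorem~\ref{second} identifies $\dom\st_r=\dom|T_\st|^{1/2}$, so (i) is equivalent to $\st=\st_r$. Hence it is enough to show that either of the one-sided inclusions (ii), (iii) implies $\st=\st_r$.

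Assume first (ii), so $\dom\st\subset\dom\st_r$. I would begin by showing that the set-theoretic inclusion
\[
\iota\colon(\dom\st,\st_\lambda)\hookrightarrow(\dom\st_r,(\st_r)_\lambda)
\]
is bounded, via the closed graph theorem: both Krein spaces are Banach through some fundamental symmetry, and both embed continuously in $(H,(\cdot,\cdot))$, which separates limits. Next, for $u\in\dom T_\st$ and $v\in\dom\st\subset\dom\st_r$, Theorem~\ref{first} applied to \emph{both} forms yields $\st[u,v]=(T_\st u,v)=\st_r[u,v]$. Approximating a general $u\in\dom\st$ by elements of $\dom T_\st$ in $\st_\lambda$-topology (density from Theorem~\ref{first}), transferring convergence to $(\st_r)_\lambda$-topology via boundedness of $\iota$, and passing to the limit in both forms gives $\st[u,v]=\st_r[u,v]$ for all $u,v\in\dom\st$. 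In particular $\st_\lambda$ and $(\st_r)_\lambda$ coincide as indefinite inner products on $\dom\st$.

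To conclude $\dom\st=\dom\st_r$, note that $\dom T_\st\subset\dom\st$ is dense in $(\dom\st_r,(\st_r)_\lambda)$ by Theorem~\ref{first}, so $\dom\st$ is dense in $\dom\st_r$ in the ambient topology, and it suffices to prove closedness of $\dom\st$ there. For this the intrinsic Krein topology of $(\dom\st,\st_\lambda)$ must be shown to coincide with the subspace topology inherited from $(\dom\st_r,(\st_r)_\lambda)$; once this is in place, completeness of the intrinsic topology gives closedness of the subspace, and density forces equality. The implication (iii)$\Rightarrow$(i) is proved symmetrically, with the bounded inclusion $(\dom\st_r,(\st_r)_\lambda)\hookrightarrow(\dom\st,\st_\lambda)$ playing the role of $\iota$ and the same form-agreement plus density/closedness dichotomy applied with the roles of $\st$ and $\st_r$ reversed.

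The main obstacle I anticipate is exactly this identification of intrinsic and subspace Krein topologies on the smaller domain. In the Hilbert setting it would be automatic, but in a Krein space two Hilbert majorants of the same indefinite inner product need not be equivalent on an arbitrary subspace; moreover, the subspace topology is a complete Banach topology precisely when the subspace is closed, which is the very statement one wants to prove. My plan is to bridge this with a closed graph argument on the identity map between the two candidate topologies, leveraging the common continuous embedding into $H$ for separation and the fact that all fundamental symmetry norms on a fixed complete Krein space are equivalent; resolving the residual circularity is the technical heart of the proposition, and may require appealing to Krein space regularity (projectional completeness) on top of the closed graph argument.
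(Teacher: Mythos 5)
The paper itself offers no proof of this proposition: it is quoted from \cite[Proposition 2.5]{FHSW3}, and the only related argument actually written out in the text is the proof of Lemma \ref{two_forms_coincide}, which \emph{presupposes} the domain equality of \cite[Lemma 2.4]{FHSW3}. Your overall strategy --- reduce to the unique regular closure $\st_r$ of $(T_\st\cdot,\cdot)$ via Theorems \ref{one-to-one_regular} and \ref{second}, obtain boundedness of the inclusion by the closed graph theorem through the common continuous embedding into $(H,(\cdot,\cdot))$, and propagate $\st[u,v]=(T_\st u,v)=\st_r[u,v]$ from $u\in\dom T_\st$ to all of the smaller domain by the density statement of Theorem \ref{first} --- is the right one and is in line with the cited source.

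There is, however, a genuine gap at the decisive step, and you flag it yourself: you never derive $\dom\st=\dom\st_r$ from the one-sided inclusion. Your proposed closing move (identify the intrinsic Kre\u{\i}n topology of the smaller space with the subspace topology, then invoke completeness plus density) is, as you note, circular, and appealing to ``projectional completeness'' will not rescue it: a dense, continuously embedded \emph{proper} subspace carrying its own Kre\u{\i}n space structure agreeing with the given inner product on a dense set is exactly the phenomenon of non-unique Kre\u{\i}n space completions that this whole paper is about (cf. Corollary \ref{cor_ex_MT_NT}), so no purely topological density/completeness argument can succeed. What closes the gap is that after your form-agreement step the inner products coincide on \emph{all} of the smaller space, so the inclusion $\iota$ is a bounded Kre\u{\i}n-space isometry with dense range; its Kre\u{\i}n adjoint then satisfies $\iota^{+}\iota=I$ on the smaller space by nondegeneracy, hence $\iota\iota^{+}$ is a bounded idempotent acting as the identity on the dense range of $\iota$, hence $\iota\iota^{+}=I$ on the larger space and $\iota$ is surjective. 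With that argument inserted (and the same reasoning applied with the roles of $\st$ and $\st_r$ exchanged for (iii)$\Rightarrow$(i)), your proof is complete; as written, the key implication is asserted but not established.
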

This result can be improved by a slight extension of \cite[Lemma 2.4]{FHSW3}:
\begin{lem}\label{two_forms_coincide}
If we have two closed forms $\st_1 [\cdot,\cdot]$ and $\st_2 [\cdot,\cdot]$
associated with the same self-adjoint representing operator $T$
 in the Hilbert space $(H, \,  (\cdot,\cdot))$
then $\dom \st_1 \subset \dom \st_2$ implies 
$\st_1 [\cdot,\cdot] = \st_2 [\cdot,\cdot]$.
\end{lem}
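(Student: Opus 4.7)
The plan is to use the density of $\dom T$ in each form domain (guaranteed by the First Representation Theorem), combined with continuity of both forms, to transport the identity $\st_1[u,v] = (Tu,v) = \st_2[u,v]$ from $\dom T$ to all of $\dom \st_1$. Pick any gap points $\lambda_1$ of $\st_1$ and $\lambda_2$ of $\st_2$, so that each $(\dom \st_i, \st_{i,\lambda_i}[\cdot,\cdot])$ is a Kre\u{\i}n space continuously embedded in $H$, and $\dom T$ is dense in $(\dom \st_1, \st_{1,\lambda_1}[\cdot,\cdot])$ by Theorem \ref{first}. Since $\st_1$ and $\st_2$ are both represented by the same $T$, they agree on $\dom T \times \dom T$. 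By symmetry it then suffices to show, for each fixed $v \in \dom \st_1$, that the linear functionals $u \mapsto \st_1[u,v]$ and $u \mapsto \st_2[u,v]$ are continuous on $(\dom \st_1, \st_{1,\lambda_1}[\cdot,\cdot])$.

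Continuity of $\st_1(\cdot,v)$ is immediate: the Kre\u{\i}n-space inner product $\st_{1,\lambda_1}[\cdot,\cdot]$ is jointly continuous on its Kre\u{\i}n space (e.g.\ via a fundamental symmetry), and the correction term $\lambda_1(\cdot,\cdot)$ is continuous because the embedding into $(H,(\cdot,\cdot))$ is. The substantive step is continuity of $\st_2(\cdot, v)$ on $(\dom \st_1, \st_{1,\lambda_1}[\cdot,\cdot])$. Here I would use the closed graph theorem to show that the identity embedding
\[
\iota : (\dom \st_1, \st_{1,\lambda_1}[\cdot,\cdot]) \longrightarrow (\dom \st_2, \st_{2,\lambda_2}[\cdot,\cdot]),
\]
which is well defined by the hypothesis $\dom \st_1 \subset \dom \st_2$, is continuous. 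Both spaces are Banach spaces under their Hilbert-space topologies and both embed continuously into $H$, so if $f_n \to f$ in $(\dom \st_1, \st_{1,\lambda_1}[\cdot,\cdot])$ and $\iota(f_n) \to g$ in $(\dom \st_2, \st_{2,\lambda_2}[\cdot,\cdot])$, then $f_n \to f$ and $f_n \to g$ in $H$, whence $f = g$. Thus $\iota$ has closed graph and is continuous.

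Once $\iota$ is continuous, continuity of $\st_2(\cdot, v)$ on $(\dom \st_2, \st_{2,\lambda_2}[\cdot,\cdot])$ pulls back to continuity on $(\dom \st_1, \st_{1,\lambda_1}[\cdot,\cdot])$. Density of $\dom T$ in $(\dom \st_1, \st_{1,\lambda_1}[\cdot,\cdot])$ together with $\st_1 = \st_2$ on $\dom T \times \dom T$ then gives $\st_1[u,v] = \st_2[u,v]$ for all $u \in \dom \st_1$ and all $v \in \dom T$; a second density/continuity argument in the second variable, or just appealing to the symmetry of both forms, extends this to all $v \in \dom \st_1$. The main obstacle, and the only point where the proof goes beyond \cite[Lemma 2.4]{FHSW3}, is the closed-graph verification above, which is precisely what allows one to replace the original hypothesis $\dom \st_1 = \dom \st_2$ by the one-sided inclusion $\dom \st_1 \subset \dom \st_2$.
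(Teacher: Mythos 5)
Your argument for the identity of the form \emph{values} on $\dom \st_1 \times \dom \st_1$ is correct and close in spirit to the paper's: the closed-graph verification that the inclusion $\iota$ of $(\dom \st_1, \, \st_{1,\lambda_1}[\cdot,\cdot])$ into $(\dom \st_2, \, \st_{2,\lambda_2}[\cdot,\cdot])$ is continuous is valid (both topologies are complete and both embed continuously into the Hausdorff space $(H,(\cdot,\cdot))$), and combining this with the density of $\dom T$ in $(\dom \st_1, \, \st_{1,\lambda_1}[\cdot,\cdot])$ from Theorem \ref{first} and the separate continuity of both forms does transport $\st_1[u,v]=(Tu,v)=\st_2[u,v]$ from $\dom T$ to all of $\dom \st_1$. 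This is essentially the limit argument the paper uses in its second step.

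The gap is that you never show $\dom \st_2 \subset \dom \st_1$. What you have proved is that $\st_2$ \emph{extends} $\st_1$, not that the two forms are equal; equality of the domains is part of the assertion (compare Theorem \ref{thm_exceptual}(v), deduced from this lemma, which explicitly claims $K_1=K_2$). Continuity of $\iota$ gives neither surjectivity nor closed range, and ``$\st_2$ is a closed extension of the closed form $\st_1$ with the same representing operator restricted to a dense subspace'' cannot force equality by soft arguments alone --- the entire paper is about the fact that $(T\cdot,\cdot)$ on $\dom T$ admits many distinct closed extensions of completion type. The paper obtains the missing domain equality by quoting \cite[Lemma 2.4]{FHSW3}, whose content is precisely the implication $\dom \st_1 \subset \dom \st_2 \Rightarrow \dom \st_1 = \dom \st_2$ for two closed forms with the same representing operator; so your closing remark has the division of labor reversed --- the one-sided hypothesis is already handled by the cited lemma, and the paper's ``slight extension'' is exactly the equality of the form values, i.e.\ the part you did prove. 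To close the gap within your own framework you would need an additional quantitative step, e.g.\ expressing the two Kre\u{\i}n space inner products through bounded and boundedly invertible Gram operators and using the identity you established to show that $\iota$ is bounded from below, hence has closed range, hence (its range containing the dense set $\dom T$) is onto $\dom \st_2$.
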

\begin{proof}
From \cite[Lemma 2.4]{FHSW3} we can conclude that $\dom \st_1 = \dom \st_2$
and (looking in the proof of \cite[Lemma 2.4]{FHSW3})
that the embeddings of the associated Kre\u{\i}n spaces are continuous (using two gap points).
Now, let $f \in \dom \st_1 \, (= \dom \st_2), \, f_n \in \dom T$ such that $f_n \rightarrow f \, (n \rightarrow \infty)$
with respect to this Kre\u{\i}n space topology.
Then, according to Theorem \ref{first} we have
\[
\st_1 [f,f] = \lim_{n \rightarrow \infty} \st_1 [f_n,f_n] = \lim_{n \rightarrow \infty} (Tf_n,f_n) = \lim_{n \rightarrow \infty} \st_2 [f_n,f_n] = \st_2 [f,f].
\]
This implies $\st_1 [\cdot,\cdot] = \st_2 [\cdot,\cdot]$ by the polarization identity.
\end{proof}

\section{A space triplet associated with a J-non-negative, J-self-adjoint and boundedly invertible Kre\u{\i}n space operator}\label{sec_triplet}

Here a construction from \cite[Section 4.1]{F2} is recalled for 
the general case of a J-non-negative, J-self-adjoint and boundedly invertible operator $A$  in a Kre\u{\i}n space $(K, \, [\cdot , \cdot])$
(rather than for an indefinite  Kre\u{\i}n-Feller operator).

Let $J$ be a fundamental symmetry of $(K, \, [\cdot , \cdot])$ 
and denote the associated Hilbert space inner product by $\{\cdot , \cdot\} := [J \cdot , \cdot]$.
Furthermore, put
\[
K_+ := \dom (JA)^{\frac{1}{2}}, \qquad \{f,g\}_+ := \{(JA)^{\frac{1}{2}}f, (JA)^{\frac{1}{2}}g\} \quad (f,g \in K_+).
\]
Then $(K_+, \, \{\cdot , \cdot\}_+)$ is a Hilbert space which is dense and continuously embedded in $(K, \, [\cdot , \cdot])$.
Note that by \cite[Remark 1.5]{C1} the form $\{\cdot , \cdot\}_+$ on $K_+$ 
coincides with the closure of the positive definite sesquilinear form $[A \cdot , \cdot]$
defined on $\dom A$.
Let $K_-$ be the dual space of $(K_+, \, \{\cdot , \cdot\}_+)$, i.e. the space of all linear functionals 
$\varphi : K_+ \longrightarrow {\mathbb C}$ which are continuous with respect to $\{\cdot , \cdot\}_+$.
This is a complex Banach space with
\[
(\varphi + \psi)(f) := \varphi(f) + \psi(f), \quad (\alpha \cdot \varphi)(f) := \overline{\alpha} \cdot \varphi(f) \quad (f \in K_+)
\]
and with the norm
\[
\{\varphi\}_- := \sup_{f \in K_+, \{f,f\}_+ \leq 1} |\varphi(f)|.
\]
Each element $f \in K$ defines a unique continuous linear functional $[\cdot, f]$ on $K_+$.
In this sense we have the inclusion $K \subset K_-$ and the inner product $[\cdot , \cdot]$ on $K$
extends naturally to
\[
[\cdot , \cdot] : \, K_+ \times K_- \longrightarrow {\mathbb C}, \quad [f, \varphi] := \varphi (f) \quad (f \in K_+, \, \varphi \in K_-).
\]
Now, let $A_-$ denote the operator from the Riesz Representation Theorem, i.e.
\[
A_- g := \{\cdot, g\}_+, \quad (g \in \dom A_- := K_+).
\]
This is an isometric isomorphism between $(K_+, \, \{\cdot , \cdot\}_+)$
and the dual space $(K_-, \, \{\cdot\}_-)$. By definition we have for $g \in \dom A \, (\subset K_+)$
\[
[f,A_-g] = \{f, g\}_+ = \{(JA)^{\frac{1}{2}}f, (JA)^{\frac{1}{2}}g\} = [f, Ag] \quad (f \in K_+)
\]
and hence, $A_-g = Ag$. Therefore, $A$ is the restriction of $A_-$ to $\dom A$
and the inverse $A^{-1}$ is the restriction of $A_-^{-1}$ to $K$.
By the isometric property of $A_-$ we have for $\varphi \in K_-$
\[
[A_-^{-1}\varphi,\varphi] = [A_-^{-1}\varphi, A_- (A_-^{-1}\varphi)] = \{A_-^{-1}\varphi, A_-^{-1}\varphi\}_+ = \{\varphi\}_-^2.
\]
Therefore, $K_-$ turns into a Hilbert space with the inner product
\[
\{\varphi,\psi\}_- := [A_-^{-1}\varphi,\psi] \, (= \{A_-^{-1}\varphi, A_-^{-1}\psi\}_+) \quad (\varphi,\psi \in K_-).
\]
\begin{lem}\label{lem_A_-}
\begin{enumerate}
\item[(i)] $(K, \, [\cdot , \cdot])$ is dense and continuously embedded in the Hilbert space $(K_-, \, \{\cdot , \cdot\}_-)$.
\item[(ii)] Considered in the Hilbert space $(K_-, \, \{\cdot , \cdot\}_-)$, the operator $A_-$ is self-adjoint and boundedly invertible.
\item[(iii)] Considered in the Hilbert space $(K_+, \, \{\cdot , \cdot\}_+)$, the range restriction $A_+$ 
\[
\dom A_+ := A^{-1}(K_+), \quad A_+ f := Af \quad (f \in \dom A_+)
\]
is self-adjoint and boundedly invertible.
\item[(iv)] The operator $A_-$ allows the representation
\[
[f,g] = \{A_-f,g\}_-  \qquad (f \in \dom A_-, \, g \in K).
\]
\end{enumerate}
\end{lem}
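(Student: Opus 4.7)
My strategy is to work throughout in the Hilbert space $(K, \{\cdot,\cdot\})$ and exploit the factorization $A = J B^2$ with $B := (JA)^{\frac{1}{2}}$. The operator $B$ is self-adjoint, positive, and boundedly invertible in $(K,\{\cdot,\cdot\})$, with $\dom B = K_+$ and $\{f,g\}_+ = \{Bf,Bg\}$. With this in hand, all four items reduce to short calculations involving $B$, $B^{-1}$, and $J$.

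For (i), I would evaluate $\{f\}_-$ directly from its supremum definition. Writing $[g,f] = \{g,Jf\}$ and substituting $h := Bg$ (so that the constraint $\{g,g\}_+ \leq 1$ becomes $\|h\| \leq 1$) gives $\{f\}_- = \|B^{-1} J f\|$, yielding the continuous embedding with constant $\|B^{-1}\|$. For density I would use that $\dom A$ is dense in $K_+$ (the remark cited from \cite{C1}, or equivalently that $\dom JA$ is a core of $(JA)^{\frac{1}{2}}$); since $A_-|_{\dom A} = A$ and $A_- : K_+ \to K_-$ is an isometric isomorphism, $A_-(\dom A) = A(\dom A) = K$ is dense in $K_-$.

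My approach to (ii) and (iii) is the same: establish symmetry and surjectivity, then invoke the standard fact that a symmetric surjective operator is self-adjoint (its inverse is everywhere defined and symmetric, hence bounded by Hellinger--Toeplitz and therefore self-adjoint). Symmetry of $A_-$ on $K_+$ is essentially built into the construction: for $f,g \in K_+$,
\[
\{A_- f, g\}_- = [A_-^{-1} A_- f, g] = [f,g] = \overline{[g,f]} = \overline{\{A_- g, f\}_-} = \{f, A_- g\}_-.
\]
Surjectivity is the given fact that $A_-$ is an isomorphism $K_+ \to K_-$; the bound on $A_-^{-1}$ in $K_-$ follows from the continuity of $K_+ \hookrightarrow K_-$ (via (i) together with $K_+ \hookrightarrow K$) and the isometric property of $A_-^{-1} : K_- \to K_+$. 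Part (iv) is this same identity specialized to $g \in K \subset K_-$, noting that the extended pairing agrees with the Krein inner product on that subspace.

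Item (iii) is where I expect most of the work. For $f,g \in \dom A_+$ I would write
\[
\{A_+ f, g\}_+ = \{B J B^2 f, B g\} = \{J B^2 f, B^2 g\} = \{B^2 f, J B^2 g\} = \{f, A_+ g\}_+,
\]
where the second and fourth equalities apply self-adjointness of $B$ (using $g \in \dom A_+ \subset \dom B^2$ to get $Bg \in \dom B$, and using $Af, Ag \in \dom B$ coming directly from the definition of $\dom A_+$), and the third equality uses self-adjointness of $J$. Surjectivity is immediate: for any $k \in K_+$, the element $f := A^{-1} k$ lies in $\dom A_+$ and satisfies $A_+ f = k$. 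Bounded invertibility follows from the estimate $\|A_+^{-1} k\|_+ = \|B A^{-1} k\| = \|B^{-1} J k\| \leq \|B^{-1}\|^2 \{k,k\}_+^{\frac{1}{2}}$. The main obstacle in the whole lemma is the careful domain bookkeeping needed for the computation in (iii); once that is sorted out, everything else is either an isometry property of $A_-$ or a one-line manipulation.
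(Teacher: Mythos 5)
Your proposal is correct, and its overall skeleton --- symmetry plus surjectivity yields self-adjointness in (ii) and (iii), the isometry identity yields (iv) --- is the same as the paper's; but two steps take genuinely different routes. For (i), the paper proves density by taking $\varphi \in K_-$ with $\{\varphi,f\}_- = [A_-^{-1}\varphi,f]=0$ for all $f\in K$ and invoking the non-degeneracy of the Kre\u{\i}n inner product to conclude $A_-^{-1}\varphi=0$, and it proves continuity by Cauchy--Schwarz applied to $\{f,f\}_- = [A^{-1}f,f]$; you instead exhibit $K = A_-(\dom A)$ as the isometric image of a $\{\cdot,\cdot\}_+$-dense subspace of $K_+$ (which is legitimate: the density of $\dom A$ in $K_+$ is exactly the closure statement from \cite[Remark 1.5]{C1} recalled just before the lemma) and compute the dual norm exactly, $\{f\}_- = \|B^{-1}Jf\|$ with $B=(JA)^{\frac12}$; both give the embedding constant $\|A^{-1}\|^{\frac12}$, but your formula is sharper information. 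For (iii), the paper verifies symmetry of $A_+$ by passing through the duality pairing, $\{A_+f,g\}_+ = [Af,A_-g]=[Af,Ag]$, which is manifestly Hermitian and needs only $Af, Ag\in K_+$ and $f,g\in\dom A$ --- immediate from $\dom A_+ = A^{-1}(K_+)$ --- so it sidesteps the domain bookkeeping you identify as the main obstacle; your computation via $A=JB^2$ is also correct (each application of the self-adjointness of $B$ is licensed by $g\in\dom B^2$ and $Af, Ag\in\dom B$), stays inside the single Hilbert space $(K,\{\cdot,\cdot\})$, and yields the explicit bound $\|A_+^{-1}\|\le\|B^{-1}\|^2$, which the paper does not record (indeed the paper leaves the bounded invertibility in (ii) and (iii) implicit in the surjectivity, whereas you supply it). No gaps.
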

\begin{proof}
(i) Let $\varphi \in K_-$ be orthogonal to $K$ with respect to $\{\cdot , \cdot\}_-$, i.e.
\[
0 = \{\varphi , f\}_- = [A_-^{-1}\varphi, f] \quad \mbox{for all} \quad f \in K.
\]
Then, $A_-^{-1}\varphi \in K_+$ is orthogonal to $K$ with respect to $[\cdot , \cdot]$.
Consequently, we have $A_-^{-1}\varphi = 0$ and hence, $\varphi = 0$.
This implies the density of $K$ in $(K_-, \, \{\cdot , \cdot\}_-)$.
Furthermore, for $f \in K$ we can estimate
\[
\{f , f\}_-^2 = [A^{-1}f, f]^2 \leq \{A^{-1}f, A^{-1}f\}\{f,f\} \leq ||A^{-1}|| \{f,f\}^2
\]
where $||A^{-1}||$ denotes the operator norm of $A^{-1}$
in $(K, \, \{\cdot , \cdot\})$.
Therefore, the embedding is continuous.

(ii) First, for $f, g \in \dom A_- = K_+$ we observe
\[
\{A_-f,g\}_- = [A_-^{-1}(A_-f),g] = [f,g] = \overline{\{A_-g,f\}_-} = \{f, A_-g\}_- .
\]
Therefore, $A_-$ is symmetric with respect to $\{\cdot , \cdot\}_-$ and hence, self-adjoint since the range of $A_-$ is the whole space $K_-$.

(iii) Similarly, for $f, g \in \dom A_+$ we observe
\[
\{A_+f,g\}_+ = \{Af,g\}_+ = [Af, A_-g]  = [Af, Ag] = \overline{\{A_+g,f\}_+} = \{f,A_+g\}_+.
\]
Therefore, $A_+$ is symmetric with respect to $\{\cdot , \cdot\}_+$ and hence, self-adjoint since the range of $A_+$ is the whole space $K_+$.

(iv) immediately follows from the definition of $\{\cdot , \cdot\}_-$.
\end{proof}
Now, we have constructed the ``space triplet''
\begin{equation}\label{triplet}
K_+ \subset K \subset K_-
\end{equation}
where each inclusion is dense and continuous with respect to the associated topologies.
Finally, as an example, we present ``model spaces'' which appear e.g. as the images
of Fourier transformations associated with indefinite Sturm-Liouville operators
or more generally, of indefinite Kre\u{\i}n-Feller operators (see e.g. \cite[Section 4.4, Section 4.5]{F2}).
\begin{ex}\label{ex_model_spaces}
Let the real function $r \in L_{loc}^1({\mathbb R})$ satisfy the sign conditions
\begin{equation}\label{function_r_sign}
r(x) = 0 \; \mbox{a.e. on} \; [-\varepsilon,\varepsilon],
\quad x r(x) > 0 \; \mbox{a.e. on} \; (-\infty,-\varepsilon) \cup (\varepsilon, \infty)
\end{equation}
with some $\varepsilon > 0$ and assume that $r$ is odd, i.e.
\begin{equation}\label{function_r_odd}
r(-x) = - r(x) \; \mbox{a.e. on} \; {\mathbb R}.
\end{equation}
Furthermore, consider the functions
\begin{equation}\label{function_r_pm}
r_+(x) := x r(x),
\quad r_-(x) := \frac{1}{x}  r(x) \qquad (x \in {\mathbb R} \setminus \{0\})
\end{equation}
which are well defined a.e. on ${\mathbb R}$ and non-negative.
On the weighted space $L^2_r({\mathbb R})$
we define the following inner products
\begin{equation}\label{inner_prod_r}
(f,g)_r := \int_{-\infty}^\infty f \overline{g} \, |r| \, dx, \quad 
[f,g]_r := \int_{-\infty}^\infty f \overline{g} \, r \, dx \quad (f, g \in L^2_r({\mathbb R}))
\end{equation}
and on $L^2_{r_\pm}({\mathbb R})$ 
\[
(f,g)_{r_\pm} := \int_{-\infty}^\infty f \overline{g} \, r_\pm \, dx \quad (f, g \in L^2_{r_\pm}({\mathbb R})).
\]
Then, $(L^2_{r_+}({\mathbb R}), \, (\cdot,\cdot)_{r_+})$ and $(L^2_{r_-}({\mathbb R}), \, (\cdot,\cdot)_{r_-})$
are Hilbert spaces.
Furthermore, $(L^2_r({\mathbb R}), \, [\cdot,\cdot]_r)$ is a Kre\u{\i}n space
with the fundamental symmetry 
\begin{equation}\label{fundSymm}
(J f)(x) := \sgn (x) f(x) \quad (x \in {\mathbb R})
\end{equation}
satisfying $[J\cdot,\cdot]_r = (\cdot,\cdot)_r$ and hence, $(L^2_{r}({\mathbb R}), \, (\cdot,\cdot)_{r})$ is the associated Hilbert space.
From (\ref{function_r_sign}) we can conclude the inclusions
\begin{equation}\label{triplet_model}
L^2_{r_+}({\mathbb R}) \subset L^2_{r}({\mathbb R}) \subset L^2_{r_-}({\mathbb R})
\end{equation}
and each inclusion is dense and continuous with respect to the associated topologies.
Moreover, $L^2_{r_-}({\mathbb R})$ can be regarded as the dual space 
of the Hilbert space $(L^2_{r_+}({\mathbb R}), \, (\cdot,\cdot)_{r_+})$
if we identify each element $g \in L^2_{r_-}({\mathbb R})$ with the functional $[\cdot,g]_r$ on $L^2_{r_+}({\mathbb R})$,
i.e. with
\[
\varphi_g (f) := [f,g]_r := \int_{-\infty}^\infty f \overline{g} \, r \, dx \quad (f \in L^2_{r_+}({\mathbb R})).
\]
(Here, we use an obvious extension of the inner product $[\cdot,\cdot]_{r}$ 
from (\ref{inner_prod_r}) to $L^2_{r_+}({\mathbb R}) \times L^2_{r_-}({\mathbb R})$.)
Note, that indeed this integral exists and the functional $\varphi_g$ is continuous with respect to $(\cdot,\cdot)_{r_+}$ since
\begin{eqnarray}
|\varphi_g (f)|^2 & \leq & \left( \int_{-\infty}^\infty \sqrt{|x|} |f(x)| |g(x)| \frac{1}{\sqrt{|x|}} \, |r(x)| \, dx \right)^2    \nonumber \\
& \leq & \left(\int_{-\infty}^\infty |x| |f(x)|^2  \, |r(x)| \, dx\right)\left(\int_{-\infty}^\infty |g(x)|^2 \frac{1}{|x|} \, |r(x)| \, dx\right)   
\nonumber  \\
& = &
(f,f)_{r_+}(g,g)_{r_-}.   \nonumber
\end{eqnarray}
Next, in the Kre\u{\i}n space $(L^2_r({\mathbb R}), \, [\cdot,\cdot]_r)$ we define the operator $A$ by
\begin{eqnarray}
\dom A & := & \{f \in L^2_r({\mathbb R}) \, | \, \int_{-\infty}^\infty |x|^2 |f(x)|^2  \, |r(x)| \, dx < \infty \},  
\nonumber \\
(Af)(x) & := & x f(x)  \qquad (f \in \dom A)   \nonumber
\end{eqnarray}
and in the Hilbert spaces $(L^2_{r_\pm}({\mathbb R}), \, (\cdot,\cdot)_{r_\pm})$ the operators $A_{\pm}$ by
\begin{eqnarray}
\dom A_{\pm} & := & \{f \in L^2_{r_\pm}({\mathbb R}) \, | \, \int_{-\infty}^\infty |x|^2 |f(x)|^2  \, r_{\pm}(x) \, dx < \infty \},  
\nonumber \\
(A_{\pm}f)(x) & := & x f(x)  \qquad (f \in \dom A_{\pm}).   \nonumber
\end{eqnarray}
Then, by (\ref{function_r_sign}) $A$ is J-non-negative, J-self-adjoint and boundedly invertible and $A_+$ and $A_-$ are self-adjoint and boundedly invertible
(all with respect to the corresponding spaces).
In particular, we have $\dom A_- =  L^2_{r_+}({\mathbb R})$ 
and also $\dom (JA)^{\frac{1}{2}} =  L^2_{r_+}({\mathbb R})$
and for $f,g \in  L^2_{r_+}({\mathbb R})$
\[
((JA)^{\frac{1}{2}}f, (JA)^{\frac{1}{2}}g)_r 
= \int_{-\infty}^\infty \sqrt{|x|} f(x) \sqrt{|x|} \; \overline{g(x)} \, |r(x)| \, dx
= (f,g)_{r_+}.
\]
Finally, we observe that with 
\begin{equation}\label{triplet_r}
K:= L^2_r({\mathbb R}), \; [\cdot,\cdot] := [\cdot,\cdot]_r \quad 
\mbox{ and } \quad  K_\pm:= L^2_{r_\pm}({\mathbb R}), \; \{\cdot,\cdot\}_{\pm} := (\cdot,\cdot)_{r_{\pm}}
\end{equation}
and with the associated operators $A$ and $A_\pm$
the general setting of Section \ref{sec_triplet} is given and (\ref{triplet_model}) coincides with the space triplet (\ref{triplet}) in this ``model situation''.
\end{ex}

\section{Relations between closed symmetric sesquilinear forms and J-self-adjoint, J-non-negative operators in Kre\u{\i}n spaces}\label{sec_forms_operators}

In Theorem \ref{one-to-one_regular} the one-to-one relation between all regular closed symmetric forms
and all self-adjoint Hilbert space operators with a gap in its real spectrum was recalled from \cite{FHS}.
Now, we get rid of the restriction to {\it regular} closed forms
and in return we consider J-non-negative Kre\u{\i}n space operators.
Using the construction from Section \ref{sec_triplet} we shall see that
all closed symmetric forms with a gap point $0$ are in one-to-one correspondance
with all J-non-negative, J-self-adjoint and boundedly invertible Kre\u{\i}n space operators.

This result is now presented in the following Theorem \ref{thm_1-1_J-non-neg}.
Here, we use certain identifications
without stating them explictly in the Theorem in order to avoid a too technical overhead.
Two Hilbert spaces with closed forms 
$(H_1, \, (\cdot,\cdot)_1, \, \st_1 [\cdot,\cdot])$ and $(H_2, \, (\cdot,\cdot)_2, \, \st_2 [\cdot,\cdot])$
(both with gap point $0$)
are identified if there is an isometric Hilbert space isomorphisms $\phi$ from $(H_1, \, (\cdot,\cdot)_1)$ to $(H_2, \, (\cdot,\cdot)_2)$
such that $\dom \st_2 = \phi(\dom \st_1)$ and $\st_1 [f,g] = \st_2 [\phi(f), \phi(g)]$ for all $f,g \in \dom \st_1$.
Similarly, two Kre\u{\i}n spaces with J-non-negative, J-self-adjoint and boundedly invertible operators
$(K_1, \, [\cdot,\cdot]_1, \, A_1)$ and $(K_2, \, [\cdot,\cdot]_2, \, A_2)$
are identified if there is an isometric Kre\u{\i}n space isomorphisms $\psi$ from $(K_1, \, [\cdot,\cdot]_1)$ to $(K_2, \, [\cdot,\cdot]_2)$
such that $\psi(\dom A_1) = \dom A_2$ and $A_1 = \psi^{-1} A_2 \psi$.
In this sense the tuples in the formulation of the following Theorem must be regarded as the corresponding equivalence classes.

\begin{thm}\label{thm_1-1_J-non-neg}
The mapping $\Phi$ from the set
\begin{eqnarray}
{\mathcal M} := \{(H, \, (\cdot,\cdot), \, \st [\cdot,\cdot]) & | & (H, \, (\cdot,\cdot)) \mbox{ Hilbert space, } \st [\cdot,\cdot] \mbox{ closed symmetric}    \nonumber \\
& & \mbox{form in this space with a gap point } 0  \}          \nonumber
\end{eqnarray}
to the set
\begin{eqnarray}
{\mathcal N} := \{(K, \, [\cdot,\cdot], \, A) & | & (K, \, [\cdot,\cdot]) \mbox{ Kre\u{\i}n space, } A \mbox{ J-non-negative, J-self-}                            \nonumber \\
& & \mbox{adjoint and boundedly invertible in this space}  \}          \nonumber
\end{eqnarray}
given by the rule
\begin{equation}\label{rule_Phi}
K := \dom \st, \quad [\cdot,\cdot] := \st [\cdot,\cdot], \quad A := A_{\st}
\end{equation}
(using the range restriction $A_{\st}$ from (\ref{rangerestriction})) is bijective. 
Its inverse $\Phi^{-1}$ is given by the rule
\begin{equation}\label{rule_Phi_inverse}
H := K_-, \quad  (\cdot,\cdot) :=  \{\cdot,\cdot\}_-, \quad \dom \st := K, \quad \st [\cdot,\cdot] := [\cdot,\cdot]
\end{equation}
(using the construction from Section \ref{sec_triplet}).
\end{thm}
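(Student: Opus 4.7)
My plan is to check separately that the two rules in the theorem land in the target sets and are mutual inverses modulo the stated identifications; the bulk of the work is the $\Phi^{-1}\circ\Phi$ direction, where the abstract dual space $K_-$ has to be identified with the ambient Hilbert space $H$. Well-definedness of $\Phi$ is immediate: given $(H,(\cdot,\cdot),\st[\cdot,\cdot])\in\mathcal{M}$ with gap point $0$, the pair $(\dom\st,\st[\cdot,\cdot])$ is by definition a Kre\u{\i}n space continuously embedded in $H$, and $A_\st$ is $J$-self-adjoint, $J$-non-negative and boundedly invertible in it by \cite[Lemma 4.1]{FHS} applied at $\lambda=0$. Well-definedness of $\Phi^{-1}$ is equally direct: given $(K,[\cdot,\cdot],A)\in\mathcal{N}$, the triplet construction of Section~\ref{sec_triplet} produces a Hilbert space $(K_-,\{\cdot,\cdot\}_-)$ into which $K$ embeds densely and continuously (Lemma~\ref{lem_A_-}(i)); since $(K,[\cdot,\cdot])$ is itself a Kre\u{\i}n space continuously embedded in $K_-$, the symmetric form $[\cdot,\cdot]$ on $K$ is closed there with gap point $0$.

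For $\Phi\circ\Phi^{-1}=\mathrm{id}_{\mathcal{N}}$, start from $(K,[\cdot,\cdot],A)\in\mathcal{N}$ and write $\st'$ for the resulting closed form on $K\subset K_-$. Lemma~\ref{lem_A_-}(iv) gives $\st'[f,g]=\{A_-f,g\}_-$ for $f\in\dom A_-=K_+\subset K=\dom\st'$ and $g\in\dom\st'$, and Lemma~\ref{lem_A_-}(ii) makes $A_-$ self-adjoint in $K_-$. The uniqueness clause of Theorem~\ref{first} then forces $T_{\st'}=A_-$, so the range restriction $A_{\st'}$ has domain $A_-^{-1}(K)=A^{-1}(K)=\dom A$ (using that $A^{-1}$ is the restriction of $A_-^{-1}$ to $K$) and agrees with $A$ there.

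For $\Phi^{-1}\circ\Phi=\mathrm{id}_{\mathcal{M}}$: starting from $(H,(\cdot,\cdot),\st)$, apply $\Phi$ to get $K:=\dom\st$, $[\cdot,\cdot]:=\st$, $A:=A_\st$, and form the triplet $K_+\subset K\subset K_-$. I would identify $H$ with $K_-$ by extending the natural embedding $\iota\colon K\to K_-$, $\iota(f)=[\cdot,f]_\st$, to an isometric Hilbert space isomorphism $\psi\colon H\to K_-$. The crucial computation is
\[
\{\iota(f),\iota(g)\}_-=(f,g)\qquad(f,g\in K).
\]
To verify it, note that for $f\in K$ the bounded invertibility of $T_\st$ and the definition of $A_\st$ give $A_\st^{-1}f=T_\st^{-1}f\in\dom T_\st$; unwinding $\{g,h\}_+=\{(JA_\st)^{1/2}g,(JA_\st)^{1/2}h\}$ with the self-adjointness of $(JA_\st)^{1/2}$ and the identity $[Ja,Jb]=[a,b]$ for the fundamental symmetry $J$ yields $\{g,h\}_+=\st[g,A_\st h]$ for $g\in K_+$, $h\in\dom A_\st$; with $h=A_\st^{-1}f$ this reads $\{g,A_\st^{-1}f\}_+=\st[g,f]=\iota(f)(g)$, hence $A_-^{-1}\iota(f)=A_\st^{-1}f$. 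Combining the definitions of $\{\cdot,\cdot\}_-$ and of the $K_+$--$K_-$ pairing with Theorem~\ref{first} gives
\[
\{\iota(f),\iota(g)\}_-=[A_-^{-1}\iota(f),\iota(g)]=\iota(g)(A_\st^{-1}f)=\st[T_\st^{-1}f,g]=(T_\st T_\st^{-1}f,g)=(f,g).
\]
Since $\dom\st$ is dense in $H$ and in $K_-$ (Lemma~\ref{lem_A_-}(i)), $\iota$ then extends uniquely to an isometric isomorphism $\psi\colon H\to K_-$ with $\psi|_K=\iota$, providing the identifications of Hilbert spaces, form domains and forms required by the theorem.

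The main obstacle is this isometry identity: it is the one point where the ambient inner product $(\cdot,\cdot)$ has to reappear inside the purely Kre\u{\i}n-space construction of Section~\ref{sec_triplet}, and the bridge is the resolvent equality $A_\st^{-1}f=T_\st^{-1}f$ combined with the First Representation Theorem. Everything else is formal verification of well-definedness together with a density-based extension.
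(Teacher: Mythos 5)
Your argument is correct in substance and, for the hard direction, follows essentially the same route as the paper: both proofs hinge on identifying $(H,\,(\cdot,\cdot))$ isometrically with $(K_-,\,\{\cdot,\cdot\}_-)$ via the canonical embedding of $K=\dom\st$. The computation differs in detail: the paper first shows $K_+=\dom T_{\st}$ with $\{\cdot,\cdot\}_+=(T_{\st}\cdot,T_{\st}\cdot)$ and then evaluates the dual norm $\{\phi_f\}_-=\sup_{(T_{\st}g,T_{\st}g)\le 1}|(T_{\st}g,f)|=\sqrt{(f,f)}$ directly for every $f\in H$, whereas you stay on $K$, use $A_-^{-1}\iota(f)=T_{\st}^{-1}f$ together with $\{\varphi,\psi\}_-=[A_-^{-1}\varphi,\psi]$ and Theorem \ref{first} to get $\{\iota(f),\iota(g)\}_-=(f,g)$, and then extend by density. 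Both computations are valid; the paper's version has the minor advantage of defining the isomorphism on all of $H$ at once, while yours needs the (harmless) extra step of extending from the dense subspace $\dom\st$. Your treatment of $\Phi\circ\Phi^{-1}=\mathrm{id}$ via Lemma \ref{lem_A_-} and the uniqueness clause of Theorem \ref{first} matches the paper exactly.

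One genuine omission: since the tuples in ${\mathcal M}$ and ${\mathcal N}$ are equivalence classes under the stated identifications, bijectivity of $\Phi$ also requires checking that the rules (\ref{rule_Phi}) and (\ref{rule_Phi_inverse}) are independent of the chosen representative, i.e.\ that identified inputs produce identified outputs. This does not follow from your ``mutual inverses modulo identifications'' statement alone. The verification is routine --- an isometric Hilbert space isomorphism intertwining two closed forms conjugates the representing operators and hence their range restrictions, and a Kre\u{\i}n space isomorphism intertwining $A_1$ and $A_2$ induces an isometry of the $K_+$ spaces and, by duality, of the $K_-$ spaces carrying one form to the other --- but the paper devotes the final third of its proof to it, and your proposal should include it to be complete.
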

\begin{proof}
Starting with $(H, \, (\cdot,\cdot), \, \st [\cdot,\cdot]) \in {\mathcal M}$
it is clear by Section \ref{subsec_preliminaries_forms} that $(K, \, [\cdot,\cdot], \, A)$ given by 
(\ref{rule_Phi}) belongs to ${\mathcal N}$.
Conversely, we show that the Hilbert space $(K_-, \, \{\cdot,\cdot\}_-)$ given by 
Section \ref{sec_triplet}
is isometrically isomorphic to the original space 
$(H, \, (\cdot,\cdot))$ such that also $[\cdot,\cdot]$ and $\st [\cdot,\cdot]$ are connected by this isomorpism.
To this end, first observe that the space $K_+$ according to Section \ref{sec_triplet} 
coincides with the space $\dom T$ where $T = T_{\st}$ is the associated representing operator.
Indeed, $\dom T$ is a Hilbert space with the inner product $(T \cdot,T \cdot)$ since $0 \in \rho(T_{\st})$
and on $\dom A$ this inner product coincides with $\st [A \cdot,\cdot]$.
Therefore, $(T \cdot,T \cdot)$ on $\dom T$ is the closure of $\st [A \cdot,\cdot]$ 
and hence, $(T \cdot,T \cdot)$ coincides with the inner product $\{\cdot,\cdot\}_+$ on $\dom T \, (= K_+)$
according to Section \ref{sec_triplet}.
Now, for $f \in H$ consider the linear functional 
$\phi_f := (T \cdot , f)$ on $\dom T$.
Obviously, this functional is continuous with respect to $\{\cdot,\cdot\}_+ \, (= (T \cdot,T \cdot))$
and for $f \in K \, (= \dom \st)$ we have $\phi_f = \st [\cdot, f]$.
Consequently, by $\phi : \, f \rightarrow \phi_f$ the function
$\phi$ maps $H$ into the dual space $K_-$ of $(K_+, \, \{\cdot,\cdot\}_+)$
and on $K$ it coincides with the embedding of $K$ into $K_-$ according to Section \ref{sec_triplet}.
Furthermore, for $f \in H$ we have
\[
\{\phi_f\}_- = \sup_{g \in K_+, \{g,g\}_+ \leq 1} |\phi_f(g)| = \sup_{g \in \dom T, \, (Tg,Tg) \leq 1} |(Tg, f)| = \sqrt{(f,f)}
\]
since $0 \in \rho(T)$.
Therefore, $\phi$ maps $H$ isometrically into a (closed) subspace of $K_-$
and since $K$ is dense in $K_-$ the range of $\phi$ is the whole space $K_-$.
Consequently, $\phi$ is the required isomorphism.

On the other hand, starting with $(K, \, [\cdot,\cdot], \, A) \in {\mathcal N}$
it is clear by Section \ref{sec_triplet} that $(H, \, (\cdot,\cdot), \, \st [\cdot,\cdot])$ given by (\ref{rule_Phi_inverse}) belongs to ${\mathcal M}$.
Conversely, with (\ref{rule_Phi}) we return to the original element $(K, \, [\cdot,\cdot], \, A) \in {\mathcal N}$ by definition
since by Lemma \ref{lem_A_-} $A_-$ is the representing operator for the form $\st [\cdot,\cdot] = [\cdot,\cdot]$
and $A$ is its range restriction to $K$.

It remains to show that $\Phi$ maps the equivalence classes
according to the indicated identifications into each other and hence, the mapping $\Phi$ is well defined.
To this end, first consider two Hilbert spaces with closed forms 
$(H_1, \, (\cdot,\cdot)_1, \, \st_1 [\cdot,\cdot])$ and $(H_2, \, (\cdot,\cdot)_2, \, \st_2 [\cdot,\cdot])$
(both with gap point $0$)
which are identified by means of an isomorphisms $\phi \, : \, H_1 \rightarrow H_2$. 
Then, by $\st_1 [f,g] = \st_2 [\phi(f), \phi(g)]$ it is clear that also the Kre\u{\i}n spaces
$(\dom \st_1, \, \st_1 [\cdot,\cdot] )$ and $(\dom \st_2, \, \st_2 [\cdot,\cdot] )$
are isometrically isomorph by means of $\phi$.
Furthermore, with the representing operators $T_1 = T_{\st_1}$ and $T_2 = T_{\st_2}$ we have
\[
\st_2 [\phi(f), \phi(g)] = \st_1 [f,g] = (T_1 f, g)_1 =  (\phi(T_1 f), \phi(g))_2
\]
for all $f \in \dom T_1, \, g \in \dom \st_1$
which implies $\phi(f) \in \dom T_2$ and $T_2 \phi(f) = \phi(T_1 f)$.
This gives $T_1 \subset \phi^{-1} T_2 \phi$ and hence, $T_1 = \phi^{-1} T_2 \phi$ by the self-adjointness.
Consequently also the range restrictions satisfy $A_{\st_1} = \phi^{-1} A_{\st_2} \phi$
and therefore, also the Kre\u{\i}n spaces and its operators must be identified.

On the other hand consider two Kre\u{\i}n spaces with J-non-negative, J-self-adjoint and boundedly invertible operators
$(K_1, \, [\cdot,\cdot]_1, \, A_1)$ and $(K_2, \, [\cdot,\cdot]_2, \, A_2)$
which are identified by means of an isomorphisms $\psi \, : \, K_1 \rightarrow K_2$
such that $A_1 = \psi^{-1} A_2 \psi$.
Then, also the positive sesquilinear forms $[A_1 \cdot,\cdot]_1$ and $[A_2 \cdot,\cdot]_2$
are connected by $\psi$, i.e. $[A_2 \psi \cdot,\psi \cdot]_2 = [A_1 \cdot,\cdot]_1$
and hence, for its closures 
we have ${K_2}_+ = \psi ({K_1}_+)$
and $\{\cdot,\cdot{\}_1}_+ = \{\psi \cdot,\psi \cdot{\}_2}_+$.
Therefore, $\varphi$ is a continuous liner functional on $({K_2}_+, \, \{\cdot,\cdot{\}_2}_+)$ (i.e. $\varphi \in {K_2}_-$)
if and only if $\varphi \circ \psi$ is a continuous liner functional on $({K_1}_+, \, \{\cdot,\cdot{\}_1}_+)$ (i.e. $\varphi \circ \psi \in {K_1}_-$)
and we have
\[
\{\varphi{\}_2}_- = \sup_{g \in {K_2}_+, \{g,g{\}_2}_+ \leq 1} |\varphi(g)| = \sup_{h \in {K_1}_+, \{h,h{\}_1}_+ \leq 1} |\varphi( \psi (h))| = \{\varphi \circ \psi{\}_1}_- .
\]
Consequently, $\varphi \rightarrow \varphi \circ \psi$ defines an isometric isomorhism $\psi_-$ 
from the Hilbert space $({K_2}_-, \, \{\cdot,\cdot{\}_2}_-)$ to $({K_1}_-, \, \{\cdot,\cdot{\}_1}_-)$
such that the corresponding forms satisfy
\[
[\psi_-(f),\psi_-(g)]_1 = [f \circ \psi,g \circ \psi]_1 = [f,g]_2
\]
for all $f,g \in K_2 \, (= \psi (K_1))$.
Then, also these Hilbert spaces and forms must be identified.
\end{proof}

With a shift we can get rid of the restriction to the gap point $0$.
\begin{cor}\label{cor_1-1_shifted}
Let $\lambda \in {\mathbb R}$.
Then, the mapping $\Phi_{\lambda}$ from the set
\begin{eqnarray}
\{(H, \, (\cdot,\cdot), \, \st [\cdot,\cdot]) & | & (H, \, (\cdot,\cdot)) \mbox{ Hilbert space, } \st [\cdot,\cdot] \mbox{ closed symmetric}   \nonumber \\
& & \mbox{ form in this space with gap point } \lambda  \}          \nonumber
\end{eqnarray}
to the set
\begin{eqnarray}
\{(K, \, [\cdot,\cdot], \, A) & | & (K, \, [\cdot,\cdot]) \mbox{ Kre\u{\i}n space, } A \mbox{ J-self-adjoint, }    A - \lambda                        \nonumber \\
& & \mbox{ J-non-negative and boundedly invertible in this space}  \}          \nonumber
\end{eqnarray}
given by the rule
\[
K := \dom \st, \quad [\cdot,\cdot] := \st [\cdot,\cdot] -  \lambda (\cdot,\cdot), \quad A := A_{\st}
\]
(using the range restriction $A_{\st}$ from (\ref{rangerestriction})) is bijective. 
Its inverse $\Phi^{-1}_{\lambda}$ is given by the rule
\[
H := K_-, \quad  (\cdot,\cdot) :=  \{\cdot,\cdot\}_-, \quad \dom \st := K, \quad \st [\cdot,\cdot] := [\cdot,\cdot] + \lambda \{\cdot,\cdot\}_-
\]
(using the construction from Section \ref{sec_triplet} for $A - \lambda$).
\end{cor}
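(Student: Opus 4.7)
The plan is to reduce everything to Theorem \ref{thm_1-1_J-non-neg} (the case $\lambda=0$) via the shift by $\lambda I$. The key observation is that $\st[\cdot,\cdot]$ is closed with gap point $\lambda$ if and only if the form $\st_\lambda[\cdot,\cdot] := \st[\cdot,\cdot]-\lambda(\cdot,\cdot)$ from (\ref{form_lambda}) is closed with gap point $0$; in that case $\dom\st_\lambda=\dom\st$ and the Kre\u{\i}n space $(\dom\st,\,\st_\lambda[\cdot,\cdot])$ is precisely the one prescribed by the rule for $\Phi_\lambda$.

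For the forward direction I would apply $\Phi$ from Theorem \ref{thm_1-1_J-non-neg} to $(H,\,(\cdot,\cdot),\,\st_\lambda[\cdot,\cdot])\in\mathcal{M}$, producing $(\dom\st,\,\st_\lambda[\cdot,\cdot],\,A_{\st_\lambda})\in\mathcal{N}$. Since the two forms share the same domain, the First Representation Theorem forces $T_{\st_\lambda}=T_\st-\lambda I$, and comparing (\ref{rangerestriction}) for both forms then gives $A_{\st_\lambda}=A_\st-\lambda I$ on the common domain $\dom A_\st=\dom A_{\st_\lambda}$ (this identity of domains uses only that $\lambda f\in\dom\st$ whenever $f\in\dom T_\st\subset\dom\st$). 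Consequently $A:=A_\st$ is J-self-adjoint on $(\dom\st,\,\st_\lambda[\cdot,\cdot])$, and $A-\lambda$ is J-non-negative and boundedly invertible---exactly the membership required by the target set.

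For the reverse direction, given $(K,\,[\cdot,\cdot],\,A)$ in the target set, I would apply $\Phi^{-1}$ of Theorem \ref{thm_1-1_J-non-neg} to $(K,\,[\cdot,\cdot],\,A-\lambda)\in\mathcal{N}$. This yields the Hilbert space $(K_-,\,\{\cdot,\cdot\}_-)$ from the construction of Section \ref{sec_triplet} (carried out with $A-\lambda$ in place of $A$), together with the closed form $\widetilde{\st}[\cdot,\cdot]:=[\cdot,\cdot]$ on $\dom\widetilde{\st}:=K$ having gap point $0$ and range-restricted representing operator $A-\lambda$. Setting $\st[\cdot,\cdot]:=[\cdot,\cdot]+\lambda\{\cdot,\cdot\}_-$ then produces a closed form with gap point $\lambda$ (since $\st_\lambda=\widetilde{\st}$), whose representing operator is $T_{\widetilde{\st}}+\lambda I$ and whose range restriction to $\dom\st=K$ equals $(A-\lambda)+\lambda I=A$. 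Thus the composite $\Phi_\lambda\circ\Phi^{-1}_\lambda$ and its counterpart both collapse to the identities already established in Theorem \ref{thm_1-1_J-non-neg}.

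Finally, the identification of equivalence classes carries over unchanged: any isometric Hilbert or Kre\u{\i}n space isomorphism intertwining two representatives also intertwines the shifts by the bounded operators $\lambda I$ on each side, so well-definedness on equivalence classes follows directly from the $\lambda=0$ version. I do not expect any genuine obstacle---the whole argument is a bookkeeping exercise confirming that the shift is inserted consistently at every level (form, representing operator, range restriction, Kre\u{\i}n space inner product).
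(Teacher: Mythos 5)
Your proposal is correct and follows essentially the same route as the paper: reduce to Theorem \ref{thm_1-1_J-non-neg} via the observation that $\st[\cdot,\cdot]$ is closed with gap point $\lambda$ iff $\st_\lambda[\cdot,\cdot]$ is closed with gap point $0$, together with $T_{\st_\lambda}=T_\st-\lambda$ and hence $A_{\st_\lambda}=A_\st-\lambda$ for the range restrictions. The paper's proof is just a terser version of the same bookkeeping.
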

\begin{proof}
A form $\st [\cdot,\cdot]$ is closed with gap point $\lambda$ if and only if $\st_{\lambda} [\cdot,\cdot]$ 
from (\ref{form_lambda}) is closed with gap point $0$.
In this case the representing operators $T_{\st}$ and $T_{\st_{\lambda}}$ are connected via $T_{\st_{\lambda}} = T_{\st} - \lambda$.
This implies $A_{\st_{\lambda}} = A_{\st} - \lambda$ for the corresponding range restrictions.
Therefore, the statement follows from Theorem \ref{thm_1-1_J-non-neg} applied to $\st_{\lambda} [\cdot,\cdot]$ and $A_{\st_{\lambda}}$.
\end{proof}
Note that this Section can be regarded as a more detailed discussion of a setting from \cite[Section 5]{FHS},
in particular from \cite[Proposition 5.3]{FHS}.

\section{Families of form closures and of Kre\u{\i}n space completions associated with a fixed form $(T\cdot,\cdot)$}\label{sec_closures}

First, we mention the following improvement of \cite[Proposition 2.5]{FHSW3} 
(where a similar statement was obtained only for {\it regular} closed forms).
\begin{prop}\label{prop_gap_point_resolvent}
The set of gap points of a closed form $\st [\cdot,\cdot]$ in a Hilbert space $(H, \, (\cdot,\cdot))$
coincides with the real part of the resolvent set of its representing operator, i.e. with ${\mathbb R} \cap \rho(T_{\st})$.
\end{prop}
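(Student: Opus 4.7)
The inclusion of gap points of $\st[\cdot,\cdot]$ in $\mathbb{R} \cap \rho(T_\st)$ is exactly the last sentence of Theorem \ref{first}, so I would focus on the reverse inclusion. The plan is to fix an arbitrary gap point $\mu \in \mathbb{R}$ (one such exists by closedness of $\st[\cdot,\cdot]$) and, for a given $\lambda \in \mathbb{R} \cap \rho(T_\st)$, realize $\st_\lambda[\cdot,\cdot]$ as the ``Gram form'' of a bounded, $J$-self-adjoint, boundedly invertible operator on the known Kre\u{\i}n space $(\dom \st, \st_\mu[\cdot,\cdot])$. A standard Kre\u{\i}n space argument will then show that $(\dom \st, \st_\lambda[\cdot,\cdot])$ is itself a Kre\u{\i}n space with an equivalent topology, hence continuously embedded in $H$, so that $\lambda$ is a gap point.

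The key identity would be obtained by setting $h := (A_\st - \mu)^{-1} f \in \dom A_\st \subset \dom T_\st$ and applying Theorem \ref{first}:
\[
\st_\mu[h,g] = (T_\st h, g) - \mu (h, g) = (f + \mu h, g) - \mu(h,g) = (f,g), \qquad f,g \in \dom \st.
\]
A short calculation then yields $\st_\lambda[f,g] = \st_\mu[Cf, g]$ with $C := I + (\mu - \lambda)(A_\st - \mu)^{-1} = (A_\st - \lambda)(A_\st - \mu)^{-1}$. Boundedness and $J$-self-adjointness of $C$ in $(\dom \st, \st_\mu[\cdot,\cdot])$ are inherited from $(A_\st - \mu)^{-1}$. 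For bounded invertibility of $C$, I would verify that $A_\st - \lambda$ is a bijection of $\dom A_\st$ onto $\dom \st$: for any $g \in \dom \st$ the vector $f := (T_\st - \lambda)^{-1} g$ lies in $\dom T_\st \subset \dom \st$ and satisfies $T_\st f = g + \lambda f \in \dom \st$, so $f \in \dom A_\st$ with $(A_\st - \lambda) f = g$; injectivity descends from that of $T_\st - \lambda$. Boundedness of the inverse in the Kre\u{\i}n topology then follows from the closed graph theorem, since $A_\st$ is $J$-self-adjoint, hence closed, in $(\dom \st, \st_\mu[\cdot,\cdot])$.

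The main obstacle will be the concluding step: upgrading the relation $\st_\lambda = \st_\mu[C\cdot,\cdot]$ to a genuine Kre\u{\i}n space inner product with the same topology as $\st_\mu$. I would proceed by choosing a fundamental symmetry $J$ of $(\dom \st, \st_\mu[\cdot,\cdot])$ with Hilbert inner product $\{f,g\} := \st_\mu[Jf,g]$; then $JC$ is bounded, self-adjoint, and boundedly invertible on $(\dom \st, \{\cdot,\cdot\})$. Polar decomposing $JC = J'|JC|$ with $J' := \sgn(JC)$ gives a bounded self-adjoint involution (since $0 \in \rho(JC)$), and the equivalent Hilbert inner product $\{f,g\}' := \{|JC|f,g\}$ satisfies $\st_\lambda[f,g] = \{JCf,g\} = \{J'f,g\}'$. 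Because $J'$ commutes with $|JC|$, it is also self-adjoint in $\{\cdot,\cdot\}'$, and hence a fundamental symmetry exhibiting $(\dom \st, \st_\lambda[\cdot,\cdot])$ as a Kre\u{\i}n space whose underlying Hilbert topology is equivalent to that of $(\dom \st, \st_\mu[\cdot,\cdot])$; in particular, the embedding into $H$ stays continuous and $\lambda$ is a gap point.
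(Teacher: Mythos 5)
Your proposal is correct and follows essentially the same route as the paper: the identity $\st_\lambda[f,g]=\st_\mu[(A_\st-\lambda)(A_\st-\mu)^{-1}f,g]$ is exactly the paper's computation, and your operator $JC$ is the paper's $G=J_{\lambda_1}(A-\lambda_0)(A-\lambda_1)^{-1}$. The only difference is that you prove in-line what the paper outsources: the bounded invertibility of $A_\st-\lambda$ on the Kre\u{\i}n space (which the paper simply asserts from $\lambda\in\rho(T_\st)$) and the final step that a bounded, boundedly invertible Gram operator yields an equivalent Kre\u{\i}n space structure (for which the paper cites \cite[1.6.13]{AI} and you give the polar-decomposition argument).
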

\begin{proof}
By the First Representation Theorem \ref{first} each gap point belongs to $\rho(T_{\st})$.
Conversely, let $\lambda_0 \in {\mathbb R} \cap \rho(T_{\st})$
and let $\lambda_1 \in {\mathbb R}$ be a gap point of $\st [\cdot,\cdot]$.
Then, we show that $\st_{\lambda_0} [\cdot,\cdot]$ defines a Kre\u{\i}n space structure on $\dom \st$
with the Hilbert space topology induced by $\st_{\lambda_1} [\cdot,\cdot]$,
i.e. also $\lambda_0$ is a gap point of $\st [\cdot,\cdot]$.

We have $\lambda_0, \lambda_1 \in \rho(T_{\st})$ for the representing operator $T_{\st}$ 
and hence, also for its range restriction $A := A_{\st}$ the operators
$A - \lambda_0$ and $A - \lambda_1$ are boundedly invertible in the Kre\u{\i}n space $(\dom \st, \, \st_{\lambda_1}[\cdot,\cdot])$.
Let $J_{\lambda_1}$ denote a fundamental symmetry in this Kre\u{\i}n space 
and let $\st_{\lambda_1}(\cdot,\cdot) := \st_{\lambda_1}[J_{\lambda_1}\cdot,\cdot]$ be the associated Hilbert space inner product.
Then, for $f, g \in \dom \st$ we can calculate
\begin{eqnarray}
\st_{\lambda_0} [f,g]  & = &  \st_{\lambda_0} [(A - \lambda_1) (A - \lambda_1)^{-1}f,g]                       \nonumber \\
& = &  \st_{\lambda_0} [A (A - \lambda_1)^{-1}f,g] - \lambda_1 \st_{\lambda_0} [(A - \lambda_1)^{-1}f,g]            \nonumber  \\
& = &  \st [A (A - \lambda_1)^{-1}f,g] - \lambda_0 (A (A - \lambda_1)^{-1}f,g)            \nonumber  \\
& & \quad  
- \lambda_1 \st [(A - \lambda_1)^{-1}f,g] + \lambda_1 \lambda_0 ((A - \lambda_1)^{-1}f,g)          \nonumber  \\
& = &  \st [A (A - \lambda_1)^{-1}f,g] - \lambda_0 \st [(A - \lambda_1)^{-1}f,g]            \nonumber  \\
& & \quad  
- \lambda_1 (A(A - \lambda_1)^{-1}f,g) + \lambda_1 \lambda_0 ((A - \lambda_1)^{-1}f,g)          \nonumber  \\
& = &  \st [(A - \lambda_0) (A - \lambda_1)^{-1}f,g] - \lambda_1 ((A - \lambda_0)(A - \lambda_1)^{-1}f,g)         \nonumber  \\
& = &  \st_{\lambda_1} [(A - \lambda_0) (A - \lambda_1)^{-1}f,g]           \nonumber  \\
& = & \st_{\lambda_1} (J_{\lambda_1}(A - \lambda_0) (A - \lambda_1)^{-1}f,g) .          \nonumber
\end{eqnarray}
By the symmetry of $\st_{\lambda_0} [\cdot,\cdot]$ the operator $G := J_{\lambda_1}(A - \lambda_0) (A - \lambda_1)^{-1}$
is symmetric and everywhere defined in the Hilbert space $(\dom \st, \, \st_{\lambda_1}(\cdot,\cdot))$
and hence, $G$ is self-adjoint and bounded. 
Furthermore, $G$ is invertible and its inverse $G^{-1} = (A - \lambda_1) (A - \lambda_0)^{-1}J_{\lambda_1}$ is also bounded.
Therefore, by \cite[1.6.13]{AI} $\st_{\lambda_0}[\cdot,\cdot] = \st_{\lambda_1}(G \cdot,\cdot)$ 
defines a suitable 
Kre\u{\i}n space inner product
on $(\dom \st, \, \st_{\lambda_1}(\cdot,\cdot))$.
\end{proof}
Now, we consider a fixed Hilbert space $(H, \, (\cdot,\cdot))$
and a fixed self-adjoint and boundedly invertible operator $T$ in this space. 
Then, Proposition \ref{prop_gap_point_resolvent} allows the following characterization of
the restriction of the mapping $\Phi$ from Theorem \ref{thm_1-1_J-non-neg}
to the family of all closed forms $\st [\cdot,\cdot]$ in $(H, \, (\cdot,\cdot))$ which are represented by $T$, i.e. $T_{\st} = T$.
\begin{thm}\label{thm_1-1_only_T}
Let $T$ be a self-adjoint and boundedly invertible operator in the Hilbert space $(H, \, (\cdot,\cdot))$.
Then, the mapping $\Phi_T$ from the set
\begin{eqnarray}
{\mathcal M}_T = \{\st [\cdot,\cdot] & | & \st [\cdot,\cdot] \mbox{ is a closure of } (T\cdot,\cdot) \mbox{ in } (H, \, (\cdot,\cdot))  \}          \nonumber
\end{eqnarray}
to the set
\begin{eqnarray}
{\mathcal N}_T = \{(K, \, [\cdot,\cdot], \, A) & | & (K, \, [\cdot,\cdot]) \mbox{ is a Kre\u{\i}n space completion of }      \nonumber  \\
& &  (\dom T, \, (T\cdot,\cdot)) \mbox{  and continuously embedded in }   
\nonumber  \\
& & 
(H, \, (\cdot,\cdot)), \; A = T|_{\dom A} \mbox{ with } \dom A = T^{-1}(K) \}          \nonumber
\end{eqnarray}
given by the rule (\ref{rule_Phi}) is bijective.
We have 
\[
{\mathcal N}_T \subset {\mathcal N}, \qquad 
{\mathcal M}_T = \{\st [\cdot,\cdot] \; | \; (H, \, (\cdot,\cdot), \, \st [\cdot,\cdot]) \in {\mathcal M}, \; T_{\st} = T \}
\]
where the sets ${\mathcal N}$ and ${\mathcal M}$ are given by Theorem \ref{thm_1-1_J-non-neg}.
\end{thm}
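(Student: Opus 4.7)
The plan is to exhibit $\Phi_T$ as the restriction of the bijection $\Phi$ from Theorem \ref{thm_1-1_J-non-neg} to tuples whose representing operator is $T$. I would start by establishing the characterization $\mathcal{M}_T = \{\st[\cdot,\cdot] \,|\, (H,(\cdot,\cdot),\st[\cdot,\cdot]) \in \mathcal{M}, \; T_{\st} = T\}$. For the ``$\subset$'' direction, suppose $\st[\cdot,\cdot]$ is a closure of $(T\cdot,\cdot)$ in $(H,(\cdot,\cdot))$. Then $\dom T \subset \dom \st$, $\st[u,v] = (Tu,v)$ for $u,v \in \dom T$, and $\dom T$ is dense in the Kre\u{\i}n space $(\dom \st, \st_\lambda[\cdot,\cdot])$ for some gap point $\lambda$. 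For fixed $u \in \dom T$ and arbitrary $v \in \dom \st$, I would approximate $v$ by a sequence $v_n \in \dom T$ in the Kre\u{\i}n space topology; continuity of $\st[u,\cdot]$ in this topology together with the continuous embedding $\dom \st \hookrightarrow H$ yields $\st[u,v] = \lim_n \st[u,v_n] = \lim_n (Tu,v_n) = (Tu,v)$. By Theorem \ref{first} this forces $\dom T \subset \dom T_{\st}$ with $T_{\st}|_{\dom T} = T$; self-adjointness of both $T$ and $T_{\st}$ then gives $T = T_{\st}$. Since $0 \in \rho(T) = \rho(T_{\st})$, Proposition \ref{prop_gap_point_resolvent} makes $0$ a gap point, so $\st \in \mathcal{M}$. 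The ``$\supset$'' direction is immediate from Theorem \ref{first}, which gives $\dom T_{\st} = \dom T$ dense in $(\dom \st, \st_\lambda[\cdot,\cdot])$ and $\st[u,v] = (Tu,v)$ on $\dom T \times \dom \st$.

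Next, I would verify that $\Phi_T$ maps $\mathcal{M}_T$ into $\mathcal{N}_T$, which also yields $\mathcal{N}_T \subset \mathcal{N}$. Given $\st \in \mathcal{M}_T$, Theorem \ref{thm_1-1_J-non-neg} yields $(\dom \st, \st[\cdot,\cdot], A_\st) \in \mathcal{N}$, so $A_\st$ is J-non-negative, J-self-adjoint and boundedly invertible. Because $T_{\st} = T$, the range restriction satisfies $A_\st = T|_{T^{-1}(\dom \st)}$ with $\dom A_\st = T^{-1}(\dom \st)$. The Kre\u{\i}n space $(\dom \st, \st[\cdot,\cdot])$ is continuously embedded in $(H,(\cdot,\cdot))$ (since $0$ is a gap point) and contains $\dom T$ as a dense subspace on which its inner product coincides with $(T\cdot,\cdot)$; hence it is a Kre\u{\i}n space completion of $(\dom T,(T\cdot,\cdot))$. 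This gives $\Phi_T(\st) \in \mathcal{N}_T$, and in particular $\mathcal{N}_T \subset \mathcal{N}$.

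Injectivity of $\Phi_T$ is immediate since $\st[\cdot,\cdot]$ is determined by the data $\dom \st$ and $\st[\cdot,\cdot]$. For surjectivity, given $(K,[\cdot,\cdot],A) \in \mathcal{N}_T$, I would define $\dom \st := K$ and $\st[\cdot,\cdot] := [\cdot,\cdot]$. Then $(\dom \st, \st_0[\cdot,\cdot]) = (K,[\cdot,\cdot])$ is by hypothesis a Kre\u{\i}n space continuously embedded in $H$, and $\dom \st \supset \dom T$ is dense in $H$, so $\st[\cdot,\cdot]$ is closed with gap point $0$. The Kre\u{\i}n space completion property plus the coincidence of inner products on $\dom T$ show that $\st[\cdot,\cdot]$ is a closure of $(T\cdot,\cdot)$. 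The density-plus-continuity argument from the first paragraph then gives $T_{\st} = T$, and hence $A_\st = T|_{T^{-1}(K)} = A$, so $\Phi_T(\st) = (K,[\cdot,\cdot],A)$. The main obstacle throughout is the identification $T = T_{\st}$, which rests on combining the Kre\u{\i}n space topology on $\dom \st$ with the continuous embedding into $H$ to extend the coincidence $\st[u,v] = (Tu,v)$ from $\dom T \times \dom T$ to $\dom T \times \dom \st$; once that is in hand, Theorem \ref{thm_1-1_J-non-neg} and Proposition \ref{prop_gap_point_resolvent} provide everything else.
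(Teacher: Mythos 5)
Your proposal is correct and takes essentially the same route as the paper: the paper's (very terse) proof rests on exactly the two observations you elaborate, namely that $T$ is the representing operator $T_{\st}$ of every closure of $(T\cdot,\cdot)$ (via the First Representation Theorem and the density/continuity extension of $\st[u,v]=(Tu,v)$ to $\dom T\times\dom\st$) and that Proposition \ref{prop_gap_point_resolvent} then makes $0$ a gap point, after which $\Phi_T$ is just the restriction of the bijection $\Phi$ from Theorem \ref{thm_1-1_J-non-neg}. Your write-up simply supplies the details the paper declares ``clear by definition.''
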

\begin{proof}
The statements are clear by definition and by the First Representation Theorem \ref{first} when we observe that
$T$ is the representing operator of each
closure $\st [\cdot,\cdot]$ of the sesquiliear form $(T\cdot,\cdot)$
and hence,  by Proposition \ref{prop_gap_point_resolvent} $\st [\cdot,\cdot]$ has the gap point $0$.
\end{proof}
The following observation is a consequence of Theorem \ref{thm_1-1_J-non-neg}:
\begin{rem}\label{rem_1-1_only_T}
If we start with a closure
$\st [\cdot,\cdot] \in {\mathcal M}_T$ of $(T\cdot,\cdot)$ in $(H, \, (\cdot,\cdot))$
and consider the element $(K, \, [\cdot,\cdot], \, A) := \Phi_T (\st [\cdot,\cdot]) \in {\mathcal N}_T$
and finally construct the associated Hilbert space $(K_-, \, \{\cdot,\cdot\}_-)$ according to Section \ref{sec_triplet}
then $(H, \, (\cdot,\cdot))$ and $(K_-, \, \{\cdot,\cdot\}_-)$ are isometrically isomorphic.
Furthermore, from the proof of Theorem \ref{thm_1-1_J-non-neg} we can conclude
that also the associated operators $T$ in $(H, \, (\cdot,\cdot))$ and $A_-$ in $(K_-, \, \{\cdot,\cdot\}_-)$
are connected by this isomorpism, say $\phi : H \rightarrow K_-$: We have
$T = \phi^{-1}A_-\phi$.
\end{rem}
In the next step, 
the regularity of forms is reformulated in order to
identify precisely one exceptional element in the family ${\mathcal M}_T$ and one in ${\mathcal N}_T$.
\begin{thm}\label{thm_exceptual}
Let $T$ be a self-adjoint and boundedly invertible operator in the Hilbert space $(H, \, (\cdot,\cdot))$.
Then, the following statements hold true 
using the sets ${\mathcal M}_T$ and ${\mathcal N}_T$ and the mapping $\Phi_T$
from Theorem \ref{thm_1-1_only_T}:
\begin{enumerate}
\item[(i)] One closure $\st_0 [\cdot,\cdot] \in {\mathcal M}_T$ is regular. 
It is characterized by the property $\dom \st_0 = \dom |T|^{\frac{1}{2}}$. 
All other closures are not regular.
\item[(ii)] For one element $(K_0, \, [\cdot,\cdot]_0, \, A_0) \in {\mathcal N}_T$ infinity is not a singular critical point
of the J-self-adjoint, J-non-negative and boundedly invertible operator $A_0$.
For all others infinity is a singular critical point.
\item[(iii)] We have $(K_0, \, [\cdot,\cdot]_0, \, A_0) = \Phi_T (\st_0 [\cdot,\cdot])$, i.e.
the two exceptional elements in ${\mathcal M}_T$ and ${\mathcal N}_T$ are connected via (\ref{rule_Phi}).
\item[(iv)] For two closures $\st_1 [\cdot,\cdot], \st_2 [\cdot,\cdot] \in {\mathcal M}_T$
the relation $\dom \st_1 \subset \dom \st_2$ implies $\st_1 [\cdot,\cdot] = \st_2 [\cdot,\cdot]$.
\item[(v)] For two Kre\u{\i}n space completions $(K_1, \, [\cdot,\cdot]_1), (K_2, \, [\cdot,\cdot]_2)$ of $(\dom T, \, (T\cdot,\cdot))$
which are continuously embedded in $(H, \, (\cdot,\cdot))$
the relation $K_1 \subset K_2$ implies $K_1 = K_2$ and $[\cdot,\cdot]_1 = [\cdot,\cdot]_2$.
\end{enumerate}
\end{thm}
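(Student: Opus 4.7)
The plan is to reduce each of the five assertions to results already established in the paper, mainly Theorem~\ref{one-to-one_regular}, Theorem~\ref{second}, Lemma~\ref{two_forms_coincide}, and the bijection $\Phi_T$ of Theorem~\ref{thm_1-1_only_T}. For (i), I would invoke Theorem~\ref{one-to-one_regular}: since $T$ is self-adjoint with $0 \in \rho(T)$, there exists a unique regular closed symmetric form $\st_0[\cdot,\cdot]$ whose representing operator is $T$, and by Theorem~\ref{second} it is characterised by $\dom \st_0 = \dom |T|^{\frac{1}{2}}$. To place $\st_0$ in $\mathcal{M}_T$ it remains to note that $\dom T$ is dense in the Kre\u{\i}n space $(\dom \st_0, \, \st_0[\cdot,\cdot])$, which is part of Theorem~\ref{first}. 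Uniqueness of the regular element of $\mathcal{M}_T$ is then immediate, since any $\st \in \mathcal{M}_T$ satisfies $T_{\st} = T$.

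For (ii) and (iii), the key observation is that, under $\Phi_T$, regularity of $\st$ corresponds by Theorem~\ref{second} exactly to the property that $\infty$ is not a singular critical point of the range restriction $A_{\st}$, i.e.\ of the third component of $\Phi_T(\st) \in \mathcal{N}_T$. Since $\Phi_T$ is bijective by Theorem~\ref{thm_1-1_only_T}, the unique regular closure $\st_0$ from (i) transports to the unique triple $(K_0, \, [\cdot,\cdot]_0, \, A_0) \in \mathcal{N}_T$ for which $\infty$ is not a singular critical point, establishing both claims at once.

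Part (iv) is a direct application of Lemma~\ref{two_forms_coincide} to the two closures $\st_1, \st_2 \in \mathcal{M}_T$, which share the representing operator $T$. For (v) the plan is to translate to forms via $\Phi_T^{-1}$: equip each $K_i$ with $A_i := T|_{T^{-1}(K_i)}$, so that $(K_i, \, [\cdot,\cdot]_i, \, A_i) \in \mathcal{N}_T$, and let $\st_i := \Phi_T^{-1}(K_i, \, [\cdot,\cdot]_i, \, A_i) \in \mathcal{M}_T$. By the construction of $\Phi_T$ one has $\dom \st_i = K_i$ as subsets of $H$ and $\st_i[\cdot,\cdot] = [\cdot,\cdot]_i$. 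The hypothesis $K_1 \subset K_2$ then reads $\dom \st_1 \subset \dom \st_2$, so (iv) yields $\st_1 = \st_2$, hence $K_1 = K_2$ with agreeing inner products. The only mildly delicate point I anticipate is verifying that the triples $(K_i, \, [\cdot,\cdot]_i, \, A_i)$ really lie in $\mathcal{N}_T$ so that $\Phi_T^{-1}$ applies, but this is built into the definition of $\mathcal{N}_T$ in Theorem~\ref{thm_1-1_only_T}, so no genuinely new difficulty arises beyond careful bookkeeping.
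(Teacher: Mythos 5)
Your proposal is correct and follows essentially the same route as the paper, which simply states that the claims are clear from the definitions, from the results of Section~\ref{subsec_preliminaries_forms} (Theorems~\ref{first}, \ref{second}, \ref{one-to-one_regular}) together with Proposition~\ref{prop_gap_point_resolvent}, and in particular from Lemma~\ref{two_forms_coincide}. You have merely written out the details the author leaves implicit: (i) via Theorem~\ref{one-to-one_regular} and Theorem~\ref{second}, (ii)--(iii) via the regularity/non-singular-critical-point equivalence transported through the bijection $\Phi_T$, and (iv)--(v) via Lemma~\ref{two_forms_coincide}.
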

\begin{proof}
Again, in view of Proposition \ref{prop_gap_point_resolvent} the
statements are clear by definition and by Section \ref{subsec_preliminaries_forms},
in particular, by Lemma \ref{two_forms_coincide}.
\end{proof}
Theorem \ref{thm_exceptual}
justifies to call the regular closure $\st_0 [\cdot,\cdot] \in {\mathcal M}_T$ according to Theorem \ref{thm_exceptual} (i)
the {\it regularization} of each closure $\st [\cdot,\cdot] \in {\mathcal M}_T$.
Similarly, among all J-self-adjoint, J-non-negative and boundedly invertible operators 
which are connected with $T$ by ${\mathcal N}_T$ the exceptional operator will be called {\it regularization}.
More precisely, we call the exceptional element $(K_0, \, [\cdot,\cdot]_0, \, A_0) \in {\mathcal N}_T$ according to Theorem \ref{thm_exceptual} (ii)
the {\it regularization} of each 
element $(K, \, [\cdot,\cdot], \, A) \in {\mathcal N}_T$.

Now, we 
reformulate some of the above statements for a fixed J-self-adjoint, J-non-negative and boundedly invertible operator in a Kre\u{\i}n space.
\begin{cor}\label{cor_regularization}
Let $A$ be a J-self-adjoint, J-non-negative and boundedly invertible operator in a Kre\u{\i}n space $(K, \, [\cdot,\cdot])$.
Let the Hilbert space $(H, \, (\cdot,\cdot))$ and the form $\st [\cdot,\cdot]$ be given by (\ref{rule_Phi_inverse}).
Furthermore, let $T := T_{\st}$ be the associated self-adjoint operator in $(H, \, (\cdot,\cdot))$.
Then, for all elements $(\widetilde{K}, \, [\cdot,\cdot\tilde{]}, \, \widetilde{A}) \in {\mathcal N}_T$ 
the spaces $(\widetilde{K}, \, [\cdot,\cdot\tilde{]})$ have $\dom T^2$ as a common dense subspace 
and the operators $\widetilde{A}$ coincide on this subspace:
\begin{equation}\label{common_subset}
\dom T^2 \subset \dom \widetilde{A}, \qquad \widetilde{A} f = T f \quad (f \in \dom T^2).
\end{equation}
For all but one of the elements $(\widetilde{K}, \, [\cdot,\cdot\tilde{]}, \, \widetilde{A}) \in {\mathcal N}_T$
infinity is a singular critical point of the J-self-adjoint, J-non-negative and boundedly invertible operator $\widetilde{A}$.
For the exceptional element $(K_0, \, [\cdot,\cdot]_0, \, A_0) \in {\mathcal N}_T$ infinity is not a singular critical point of $A_0$.
This element is the regularization of 
$(K, \, [\cdot,\cdot], \, A) \, (\in {\mathcal N}_T)$
and it is characterized by the property $K_0 = \dom |T|^{\frac{1}{2}}$. 
\end{cor}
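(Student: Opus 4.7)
The plan is to view the corollary as a translation of Theorem \ref{thm_exceptual} to the present setup and to supply the single genuinely new ingredient, namely that $\dom T^2$ is a common dense subspace.

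First I would address (\ref{common_subset}). By Theorem \ref{thm_1-1_only_T}, each $(\widetilde{K}, [\cdot,\cdot\tilde{]}, \widetilde{A}) \in {\mathcal N}_T$ satisfies $\widetilde{A} = T|_{\dom \widetilde{A}}$ with $\dom \widetilde{A} = T^{-1}(\widetilde{K})$, and the continuous dense embedding $\dom T \subset \widetilde{K}$ holds by the definition of a Kre\u{\i}n space completion of $(\dom T, (T\cdot,\cdot))$. Hence for $f \in \dom T^2$ one has $Tf \in \dom T \subset \widetilde{K}$, so $f \in T^{-1}(\widetilde{K}) = \dom \widetilde{A}$ and $\widetilde{A}f = Tf$, which gives the inclusion and the coincidence of the operators on $\dom T^2$. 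For the density of $\dom T^2$ in $\widetilde{K}$ I would compose two approximations: on the one hand $\dom T$ is dense in $\widetilde{K}$ by construction; on the other hand $\dom T^2$ is dense in $(\dom T, (T\cdot,T\cdot))$, since $T$ maps $\dom T^2$ onto $\dom T$, which in turn is dense in $H$. Invoking the identification $\widetilde{K}_+ = \dom T$ with inner product $\{\cdot,\cdot\}_+ = (T\cdot,T\cdot)$ noted in the proof of Theorem \ref{thm_1-1_J-non-neg}, together with the continuous embedding $\widetilde{K}_+ \hookrightarrow \widetilde{K}$ of the triplet, the composition yields the desired density.

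Next I would extract the remaining claims directly from Theorem \ref{thm_exceptual}. Part (ii) of that theorem, applied to the present $T = T_\st$, gives exactly one element $(K_0, [\cdot,\cdot]_0, A_0) \in {\mathcal N}_T$ for which infinity is not a singular critical point of $A_0$; part (iii) identifies this element as $\Phi_T(\st_0 [\cdot,\cdot])$ for the unique regular closure $\st_0 [\cdot,\cdot] \in {\mathcal M}_T$. The rule (\ref{rule_Phi}) together with Theorem \ref{thm_exceptual} (i) then yields $K_0 = \dom \st_0 = \dom |T|^\frac{1}{2}$, which is the characterization claimed. By the terminology fixed just after Theorem \ref{thm_exceptual}, this exceptional element is the regularization of every $(\widetilde{K}, [\cdot,\cdot\tilde{]}, \widetilde{A}) \in {\mathcal N}_T$.

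The main obstacle I anticipate is the density of $\dom T^2$ in an arbitrary completion $\widetilde{K}$, since $\dom T^2$ is defined purely in terms of the fixed operator $T$ in $H$ while the target topology varies with the completion. The key bridge is the observation that every $\widetilde{K} \in {\mathcal N}_T$ produces the same positive space $\widetilde{K}_+ = \dom T$ with the same inner product $(T\cdot,T\cdot)$ (via Remark \ref{rem_1-1_only_T} and the identification in the proof of Theorem \ref{thm_1-1_J-non-neg}); once this is in hand the two-step approximation above becomes routine.
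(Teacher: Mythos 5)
Your proposal is correct and reaches the same conclusion by the same overall strategy: defer the uniqueness/regularization claims to Theorem \ref{thm_exceptual}, verify the inclusion in (\ref{common_subset}) by the direct computation $f\in\dom T^2 \Rightarrow Tf\in\dom T\subset\widetilde{K} \Rightarrow f\in T^{-1}(\widetilde{K})=\dom\widetilde{A}$, and then prove the density of $\dom T^2$ separately. The only place where you genuinely diverge from the paper is the density step. The paper argues intrinsically in the Kre\u{\i}n space: $\dom\widetilde{A}^2$ is dense in $(\widetilde{K},[\cdot,\cdot\tilde{]})$ (a standard fact for a boundedly invertible operator with dense domain) and $\dom\widetilde{A}^2\subset\dom T^2$, so the larger set is dense as well. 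You instead go through the positive space of the triplet: $\dom T^2=T^{-1}(\dom T)$ is dense in $(\dom T,(T\cdot,T\cdot))=(\widetilde{K}_+,\{\cdot,\cdot\}_+)$ because $T$ is an isometric isomorphism of that space onto $H$ and $\dom T$ is dense in $H$, and then you compose with the dense continuous embedding $\widetilde{K}_+\hookrightarrow\widetilde{K}$. Both routes are two-step approximations and both are valid; yours has the advantage that the dense set $\dom T^2$ and the intermediate space $\dom T$ are fixed independently of the completion, so the completion-dependent input is reduced to the single embedding $\widetilde{K}_+\hookrightarrow\widetilde{K}$ already established in Section \ref{sec_triplet}, whereas the paper's argument is shorter but leans on the (unproved, though standard) density of $\dom\widetilde{A}^2$ in each $\widetilde{K}$. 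Your identification of the exceptional element and the characterization $K_0=\dom|T|^{\frac{1}{2}}$ via Theorem \ref{thm_exceptual}(i)--(iii) and the rule (\ref{rule_Phi}) matches the paper's intent exactly.
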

\begin{proof}
In view of Theorem \ref{thm_exceptual}
it remains to show (\ref{common_subset}) and the density of $\dom T^2$ in $(\widetilde{K}, \, [\cdot,\cdot\tilde{]})$.
Indeed, if $f \in \dom T^2$ then $Tf \in \dom T \subset \widetilde{K}$ and hence, $f \in \dom \widetilde{A} \, (= T^{-1}(\widetilde{K}))$.
Furthermore, for the dense subspace $\dom \widetilde{A}^2$ of $(\widetilde{K}, \, [\cdot,\cdot\tilde{]})$ 
we have $\dom \widetilde{A}^2 \subset \dom T^2$.
\end{proof}
Note that by (\ref{common_subset}) the regularization of a J-self-adjoint, J-non-negative and boundedly invertible operator in a Kre\u{\i}n space
can only be a ``small'' modification of the space and of the operator.
In particular, the  eigenvalues and eigenfunctions 
of the operators coincide.

Next, the eigenspectral functions associated with all operators from ${\mathcal N}_T$
are determined by the spectral measure associated with $T$.
\begin{thm}\label{thm_spectralFunction}
Let $T$ be a self-adjoint and boundedly invertible operator in the Hilbert space $(H, \, (\cdot,\cdot))$
and let $E$ denote the corresponding spectral measure allowing the representation
\[
T = \int_{-\infty}^\infty \lambda \, dE(\lambda).
\]
Then, for each element $(K, \, [\cdot,\cdot], \, A) \in {\mathcal N}_T$  the restrictions
\begin{equation}\label{eigenspectral_restriction}
E_K(\Delta) := E(\Delta)|_{K} \quad (\Delta \in \Sigma)
\end{equation}
define the eigenspectral function $E_K$ of $A$ in $(K, \, [\cdot,\cdot])$ according to (\ref{EigenspectralFunction}).
\end{thm}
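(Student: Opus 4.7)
The plan is to identify the resolvent of $A$ along the contour $C^\delta_{\Delta_\epsilon}$ with the restriction of the resolvent of $T$, and thereby reduce the Kre\u{\i}n space contour integral (\ref{EigenspectralFunction}) in $(K, [\cdot,\cdot])$ to the corresponding Hilbert space contour integral in $(H, (\cdot,\cdot))$, which is known to produce the spectral projection $E(\Delta)$. As a first step I would verify that $(A - \lambda)^{-1} = (T - \lambda)^{-1}|_K$ for every $\lambda \in {\mathbb C} \setminus {\mathbb R}$. Self-adjointness of $T$ gives ${\mathbb C} \setminus {\mathbb R} \subset \rho(T)$; on the Kre\u{\i}n space side $A$ is J-self-adjoint and definitizable with definitizing polynomial $p(t) = t$, so by Langer's theory (\cite{L}, see also Section \ref{subsec_preliminaries_definitizable}) $\sigma(A) \subset {\mathbb R}$ and hence also ${\mathbb C} \setminus {\mathbb R} \subset \rho(A)$. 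For $f \in K$, setting $g := (A - \lambda)^{-1}f \in \dom A \subset \dom T$ one has $(T - \lambda)g = (A - \lambda)g = f$, so $g = (T - \lambda)^{-1}f \in K$; in particular $(T - \lambda)^{-1}$ leaves $K$ invariant along the whole contour.

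For a bounded interval $\emptyset \neq \Delta \in \Sigma$ and $f \in K$, Theorem \ref{thm_EigenspectralFunction} guarantees that the iterated limit in (\ref{EigenspectralFunction}) exists in $(K, [\cdot,\cdot])$ with value $E_K(\Delta) f$. The continuous embedding $K \hookrightarrow H$ carries this convergence over to $(H, (\cdot,\cdot))$, and the resolvent identification from the first step allows one to replace the integrand by $(T - \lambda)^{-1}f$, yielding
\[
E_K(\Delta) f \; = \; \lim_{\epsilon \searrow 0} \; \lim_{\delta \searrow 0} -\frac{1}{2 \pi i} \int_{C^{\delta}_{\Delta_{\epsilon}}} (T - \lambda)^{-1}f \; d \lambda
\]
in the Hilbert space topology. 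By the classical Hilbert space functional calculus for the self-adjoint operator $T$ (essentially Stone's formula applied to the horizontal segments of $C^\delta_{\Delta_\epsilon}$, followed by the limits $\delta \searrow 0$ and then $\epsilon \searrow 0$), this expression equals $E(\Delta) f$. Hence $E_K(\Delta) = E(\Delta)|_K$ for every bounded $\Delta$, and for unbounded $\Delta \in \Sigma$ the identity $E_K(\Delta) = I_K - E_K({\mathbb R} \setminus \Delta)$ built into the definition before Theorem \ref{thm_EigenspectralFunction}, together with the analogous identity $E(\Delta) = I_H - E({\mathbb R} \setminus \Delta)$ for the Hilbert space spectral measure, reduces this case to the previous one.

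The step I expect to require the most care is the identification of the contour-integral limit with $E(\Delta) f$ in the Hilbert space setting. Langer's rule for $\Delta_\epsilon$ (upper or lower sign depending on whether the endpoint belongs to $\Delta$) is precisely the prescription needed for the correct treatment of boundary points in the spectral resolution of $T$: an endpoint lying in $\Delta$ must contribute its full spectral projection, one not lying in $\Delta$ must contribute none. Verifying this is a direct bookkeeping exercise via Stone's formula, provided the prescribed order of the limits $\delta \searrow 0$ and then $\epsilon \searrow 0$ is respected throughout.
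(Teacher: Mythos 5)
Your proposal is correct and follows essentially the same route as the paper: establish $(A-\lambda)^{-1}f=(T-\lambda)^{-1}f$ for $f\in K$ and non-real $\lambda$ (the paper justifies this in one line via $A$ being the range restriction of $T$, where you spell out the verification), transfer the convergence of the limits in (\ref{EigenspectralFunction}) to $(H,\,(\cdot,\cdot))$ via the continuous embedding, and invoke Stone's formula to identify the result with $E(\Delta)f$ for bounded $\Delta$, extending to unbounded $\Delta$ by the complement convention. No gaps.
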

\begin{proof}
Let $\widetilde{E}_K$ denote the eigenspectral function of 
the J-self-adjoint, J-non-negative and boundedly invertible operator $A$ 
according to (\ref{EigenspectralFunction}).
For $f \in K$ the limits in (\ref{EigenspectralFunction})
must be understood with respect to the topology in $(K, \, [\cdot,\cdot])$
(including the integral itself).
By the continuous embedding this also implies convergence
in $(H, \, (\cdot,\cdot))$ and additionally, we have
\[
(A - \lambda)^{-1}f = (T - \lambda)^{-1}f
\quad \mbox{ for all } 
\lambda \in {\mathbb C} \setminus {\mathbb R}, \, f \in K
\]
since $A$ is the range restriction of $T$.
Then, we can conclude $\widetilde{E}_K(\Delta) f = E(\Delta)f$
for a bounded interval $\emptyset \neq \Delta \in \Sigma$
by Stone's formula
for the spectral measure 
of a self-adjoint Hilbert space operator (see e.g. \cite[Problem VI 5.7]{Kato} or \cite[(1.5.4)]{BHS}).
This implies $\widetilde{E}_K(\Delta) = E(\Delta)|_{K}$ for all $\Delta \in \Sigma$
and hence, $\widetilde{E}_K = E_{K}$.
\end{proof}
Of course, Theorem \ref{thm_spectralFunction} also applies to the approach from Corollary \ref{cor_regularization}:
\begin{cor}\label{cor_spectral_function}
Let $A$ be a J-self-adjoint, J-non-negative and boundedly invertible operator in a Kre\u{\i}n space $(K, \, [\cdot,\cdot])$.
Let the Hilbert space $(K_-, \, \{\cdot , \cdot\}_-)$ and the self-adjoint operator $A_-$ be given according to Section \ref{sec_triplet}.
Furthermore, denote by $E$ the spectral measure of $A_-$.
Then, the eigenspectral function $E_K$ of $A$ satisfies (\ref{eigenspectral_restriction}).
\end{cor}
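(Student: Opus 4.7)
The plan is to reduce this corollary directly to Theorem \ref{thm_spectralFunction} by recognizing that the construction from Section \ref{sec_triplet} places us precisely in its hypotheses, with $A_-$ playing the role of $T$. First I would set $(H, (\cdot,\cdot)) := (K_-, \{\cdot,\cdot\}_-)$ and observe that by Lemma \ref{lem_A_-}(ii), $A_-$ is self-adjoint and boundedly invertible in this Hilbert space, so it has a well-defined spectral measure $E$ with $A_- = \int \lambda \, dE(\lambda)$.

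Next I would verify that the tuple $(K, [\cdot,\cdot], A)$ belongs to $\mathcal{N}_{A_-}$ in the sense of Theorem \ref{thm_1-1_only_T} (now with $T$ replaced by $A_-$). Set $\mathfrak{t}[\cdot,\cdot] := [\cdot,\cdot]$ on $\dom \mathfrak{t} := K$; by Theorem \ref{thm_1-1_J-non-neg} (applied via the rule (\ref{rule_Phi_inverse})), this is a closed symmetric form in $(K_-, \{\cdot,\cdot\}_-)$ with gap point $0$. From Lemma \ref{lem_A_-}(iv), namely $[f,g] = \{A_-f,g\}_-$ for $f \in \dom A_-$ and $g \in K$, together with the First Representation Theorem \ref{first}, we read off $T_\mathfrak{t} = A_-$. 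The range restriction in the sense of (\ref{rangerestriction}) then coincides with $A$, since by Lemma \ref{lem_A_-} the operator $A$ is exactly the restriction of $A_-$ to $\dom A = A_-^{-1}(K)$. In particular, $(K, [\cdot,\cdot])$ is a Kre\u{\i}n space completion of $(\dom A_-, (A_-\cdot,\cdot))$ that is continuously embedded in $(K_-, \{\cdot,\cdot\}_-)$ by Lemma \ref{lem_A_-}(i), so $(K, [\cdot,\cdot], A) \in \mathcal{N}_{A_-}$.

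Having established this, the conclusion is immediate: Theorem \ref{thm_spectralFunction} applied to the self-adjoint boundedly invertible operator $A_-$ and the element $(K, [\cdot,\cdot], A) \in \mathcal{N}_{A_-}$ yields that the eigenspectral function $E_K$ of $A$ is given by $E_K(\Delta) = E(\Delta)|_K$ for every $\Delta \in \Sigma$, which is exactly (\ref{eigenspectral_restriction}).

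The only delicate point, and hence the main thing to check carefully, is the identification $T_\mathfrak{t} = A_-$ and the consequent identification of range restrictions; once this is in place, no further resolvent or Stone-formula computation is needed because that work has already been carried out in the proof of Theorem \ref{thm_spectralFunction}. I would therefore keep the proof short, essentially a one-paragraph bookkeeping argument that invokes Lemma \ref{lem_A_-}, Theorem \ref{thm_1-1_J-non-neg}, and Theorem \ref{thm_spectralFunction} in sequence.
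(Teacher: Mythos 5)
Your proposal is correct and takes essentially the same route as the paper: the paper offers no separate proof for this corollary, merely remarking that Theorem \ref{thm_spectralFunction} applies to the setting of Corollary \ref{cor_regularization}, i.e.\ with $T_{\st}=A_-$ and $A$ its range restriction, which is precisely the bookkeeping you spell out. The identification $T_{\st}=A_-$ via Lemma \ref{lem_A_-} and of $A$ as the range restriction to $K$ is the same one already recorded in the proof of Theorem \ref{thm_1-1_J-non-neg}, so your argument is a faithful (and slightly more explicit) version of the intended one.
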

Using again the terminology of Section \ref{sec_triplet} and Corollary \ref{cor_spectral_function} \'Curgus' result from \cite[Proposition 3.1]{C2}
has the form
\[
f \in K_+ \quad \Leftrightarrow
\quad \int_{-\infty}^{\infty} \lambda \; d[E_K(\lambda)f,f] < \infty .
\]
Corollary \ref{cor_spectral_function} allows the following slight improvement 
(which however, can also be obtained directly from the proof in \cite[Proposition 3.1]{C2}).
\begin{cor}\label{cor_improvementBranko}
In the setting of Corollary \ref{cor_spectral_function} we have for $f \in K_+$
\[
\{f,f\}_+ = \int_{-\infty}^{\infty} \lambda \; d[E_K(\lambda)f,f] 
\]
\end{cor}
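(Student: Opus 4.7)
The plan is to prove the identity first on $\dom A$ by a direct spectral calculation, and then extend to all of $K_+$ by a spectral truncation of $f$ whose control is furnished by the \'Curgus finiteness criterion quoted just before the statement.

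First I would verify the equality for $f \in \dom A$: by the definition of $\{\cdot,\cdot\}_+$ in Section \ref{sec_triplet} one has $\{f,f\}_+ = \{(JA)^{\frac{1}{2}}f,(JA)^{\frac{1}{2}}f\} = \{JAf,f\} = [Af,f]$, while Langer's functional calculus for the $J$-self-adjoint, $J$-non-negative, boundedly invertible operator $A$ gives $[Af,f] = \int_{-\infty}^{\infty}\lambda\,d[E_K(\lambda)f,f]$. The sign properties in Theorem \ref{thm_EigenspectralFunction} imply that $[E_K(\cdot)f,f]$ is non-decreasing on $(0,\infty)$ and non-increasing on $(-\infty,0)$, so that $\lambda\,d[E_K(\lambda)f,f]$ is a genuine positive Borel measure on ${\mathbb R}$.

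For general $f \in K_+$ set $\Delta_n := [-n,n]$ and $f_n := E_K(\Delta_n)f$. Theorem \ref{thm_EigenspectralFunction} gives $f_n \in \dom A$, and combined with the $J$-self-adjointness of the $E_K(\Delta)$ and the projection identity $E_K(\Delta_1)E_K(\Delta_2) = E_K(\Delta_1\cap\Delta_2)$ it yields $[E_K(\Delta)f_n,f_n] = [E_K(\Delta \cap \Delta_n)f,f]$, i.e.\ $d[E_K(\lambda)f_n,f_n] = \chi_{\Delta_n}(\lambda)\,d[E_K(\lambda)f,f]$. Applying the case $f \in \dom A$ to $f_n$ and to the differences $f_n - f_m = E_K(\Delta_n \setminus \Delta_m)f$ (for $m < n$) produces
\[
\{f_n,f_n\}_+ = \int_{-n}^{n}\lambda\,d[E_K(\lambda)f,f], \qquad \{f_n-f_m,f_n-f_m\}_+ = \int_{\Delta_n\setminus\Delta_m}\lambda\,d[E_K(\lambda)f,f].
\]
By \'Curgus's criterion the positive measure $\lambda\,d[E_K(\lambda)f,f]$ has finite total mass, so the right-hand tail tends to zero; hence $(f_n)$ is Cauchy in the Hilbert space $(K_+,\{\cdot,\cdot\}_+)$ and converges there to some $\tilde f$. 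Since the embeddings $K_+ \hookrightarrow K \hookrightarrow K_-$ are continuous and $f_n \to f$ in $K_-$ by the spectral theorem for $A_-$ (using $E(\Delta_n)|_K = E_K(\Delta_n)$ from Corollary \ref{cor_spectral_function}), one gets $\tilde f = f$, so $\{f_n,f_n\}_+ \to \{f,f\}_+$, and passing to the limit in the first identity above proves the claim.

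The main obstacle is the spectral identity $[Af,f] = \int\lambda\,d[E_K(\lambda)f,f]$ for $f \in \dom A$: this is the one step that draws on Langer's full functional calculus from \cite{L} rather than only the properties collected in Theorem \ref{thm_EigenspectralFunction}. Once it is granted, the passage to $K_+$ is a routine spectral-truncation argument, and \'Curgus's finiteness criterion supplies precisely the summability needed to close the Cauchy estimate.
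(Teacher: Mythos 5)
Your proof is correct, but it takes a genuinely different route from the paper's. The paper's proof is a two-line computation entirely inside the Hilbert space $(K_-,\{\cdot,\cdot\}_-)$: since $E$ extends $E_K$ (Corollary \ref{cor_spectral_function}) and $[g,h]=\{A_-g,h\}_-$ by Lemma \ref{lem_A_-}(iv), one rewrites $[E_K(\lambda)f,f]=\{E(\lambda)f,A_-f\}_-$, applies the ordinary spectral theorem for the self-adjoint operator $A_-$ to get $\int_{-\infty}^{\infty}\lambda\,d\{E(\lambda)f,A_-f\}_-=\{A_-f,A_-f\}_-$, and concludes with the isometry of $A_-:(K_+,\{\cdot,\cdot\}_+)\to(K_-,\{\cdot\}_-)$. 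No truncation, no appeal to Langer's functional calculus, and the finiteness of the integral falls out automatically rather than being imported from \'Curgus's criterion. Your argument instead proves the identity on spectrally truncated vectors and closes a Cauchy estimate in $K_+$; this essentially re-runs the proof of \cite[Proposition 3.1]{C2}, which is exactly what the paper alludes to when it says the corollary ``can also be obtained directly from the proof in \cite[Proposition 3.1]{C2}.'' What your route buys is independence from the duality construction of Section \ref{sec_triplet}; what it costs is (a) the spectral-integral representation $[Ag,g]=\int\lambda\,d[E_K(\lambda)g,g]$, which goes beyond the properties listed in Theorem \ref{thm_EigenspectralFunction} — though you only ever apply it to vectors $g\in E_K(\Delta)K$ with $\Delta$ bounded, where $A|_{E_K(\Delta)K}$ is bounded with spectrum in $\overline{\Delta}$ and the representation is unproblematic, so you could weaken your Step 1 accordingly — and (b) the use of \'Curgus's finiteness criterion as an external input. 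One cosmetic repair: $\Delta_n\setminus\Delta_m$ is not an element of the semiring $\Sigma$, so you should write $f_n-f_m=E_K([-n,-m))f+E_K((m,n])f$ and treat the two mutually orthogonal pieces separately; nothing else changes.
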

\begin{proof}
By Corollary \ref{cor_spectral_function} $E$ is an extension of $E_K$. Therefore we can calculate
\[
\int_{-\infty}^{\infty} \lambda \; d[E_K(\lambda)f,f] 
= \int_{-\infty}^{\infty} \lambda \; d\{E(\lambda)f, A_- f\}_-
= \{A_- f, A_- f \}_- 
= \{f, f \}_+ 
\]
since $A_- : K_+ \rightarrow K_-$ is isometric.
\end{proof}
Finally, it is mentioned that the results in this Section are related to 
the study of Kre\u{\i}n space completions by \'Curgus and Langer in \cite{CL2}.
In the terminology of \cite{CL2} the exceptional element $(K_0, \, [\cdot,\cdot]_0, \, A_0) \in {\mathcal N}_T$
is related to the so-called ``canonical'' Kre\u{\i}n space completion of $(\dom T, \, (T\cdot,\cdot))$.
Furthermore, translating \cite[Theorem 2.7]{CL2} into the present setting we obtain the following result:
\begin{thm}\label{thm_unique}
Let $T$ be a self-adjoint and boundedly invertible operator in the Hilbert space $(H, \, (\cdot,\cdot))$.
Then, the form $(T\cdot,\cdot)$ has a unique closure
(i.e. the sets ${\mathcal M}_T$ and ${\mathcal N}_T$ from Theorem \ref{thm_1-1_only_T}
both have only one element)
if and only if the operator $T$ is semibounded.
\end{thm}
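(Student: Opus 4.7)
The plan is to translate the statement through the bijection $\Phi_T$ of Theorem \ref{thm_1-1_only_T}. Under that bijection, closures of $(T\cdot,\cdot)$ in $(H,(\cdot,\cdot))$ correspond exactly to Kre\u{\i}n space completions $(K,[\cdot,\cdot])$ of the pre-inner-product space $(\dom T, (T\cdot,\cdot))$ that are continuously embedded in $H$. Hence the theorem reduces to showing that such Kre\u{\i}n space completions are unique precisely when $T$ is semibounded, which is the content of \cite[Theorem 2.7]{CL2} once properly matched to the present setting.

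For the easy implication I would assume $T$ is semibounded, say $T \geq c$, and let $\st[\cdot,\cdot] \in {\mathcal M}_T$ be an arbitrary closure. Pick any $\lambda < c$; then $\lambda \in \rho(T)$ so, by Proposition \ref{prop_gap_point_resolvent}, $\lambda$ is a gap point of $\st[\cdot,\cdot]$. On the $\st_\lambda$-dense subspace $\dom T$ of $(\dom \st, \st_\lambda[\cdot,\cdot])$ one has $\st_\lambda[f,f] = ((T-\lambda)f,f) \geq (c-\lambda)(f,f) > 0$, and by continuity of $\st_\lambda[\cdot,\cdot]$ in the Kre\u{\i}n-space topology this strict positivity extends to all nonzero $f \in \dom \st$. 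Hence $(\dom \st, \st_\lambda[\cdot,\cdot])$ is in fact a Hilbert space continuously embedded in $H$ and extending $((T-\lambda)\cdot,\cdot)$ from $\dom T$. Kato's classical uniqueness theorem for closures of semibounded symmetric forms then forces this completion, and therefore $\st[\cdot,\cdot]$ itself, to be unique.

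For the non-trivial direction I would assume $T$ is not semibounded and exhibit at least two distinct elements in ${\mathcal M}_T$. Theorem \ref{thm_exceptual}(i) always provides the regular closure $\st_0[\cdot,\cdot]$ with $\dom \st_0 = \dom |T|^{\frac{1}{2}}$, so it suffices to produce one non-regular closure. My approach is to invoke \cite[Theorem 2.7]{CL2}: in their abstract framework the inner-product space $(\dom T, (T\cdot,\cdot))$ admits a non-canonical Kre\u{\i}n space completion whenever it fails to be semibounded. The resulting element of ${\mathcal N}_T$ then yields, via $\Phi_T^{-1}$, a non-regular closure of $(T\cdot,\cdot)$ different from $\st_0[\cdot,\cdot]$.

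The main obstacle is precisely this translation step: \cite[Theorem 2.7]{CL2} delivers non-unique completions within its own Kre\u{\i}n-space framework, while membership in ${\mathcal N}_T$ imposes the additional requirement of continuous embedding into the fixed ambient Hilbert space $(H,(\cdot,\cdot))$. To handle this I would take as a reference the canonical completion, which corresponds to $(K_0,[\cdot,\cdot]_0)$ with $K_0 = \dom |T|^{\frac{1}{2}}$ and does embed continuously into $H$; the other completions produced by \'Curgus and Langer are obtained as controlled modifications of this canonical one, and one must verify that this control preserves continuity of the embedding into $H$. Once this compatibility is checked, the dichotomy of \cite[Theorem 2.7]{CL2} passes verbatim to ${\mathcal M}_T$ and ${\mathcal N}_T$, completing the proof.
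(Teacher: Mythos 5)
Your overall strategy --- reducing the statement via the bijection $\Phi_T$ of Theorem \ref{thm_1-1_only_T} to the uniqueness question for Kre\u{\i}n space completions of $(\dom T,\,(T\cdot,\cdot))$ and then invoking \cite[Theorem 2.7]{CL2} --- is exactly the route the paper takes, and your self-contained argument for the direction ``semibounded $\Rightarrow$ unique'' is sound, with the small repair that what extends by continuity is the inequality $\st_\lambda[f,f]\geq (c-\lambda)(f,f)$ (both sides are continuous in the Kre\u{\i}n space topology because the embedding into $H$ is continuous), from which strict positivity for $f\neq 0$ follows; strict positivity itself is not a closed condition. However, for the non-trivial direction you explicitly leave the decisive step open: you flag as ``the main obstacle'' the need to check that the non-canonical completions of \'Curgus--Langer remain continuously embedded in $(H,\,(\cdot,\cdot))$, and you do not carry out that check. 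As written, the proof is therefore incomplete exactly where the content lies.

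The way the paper closes this is worth noting, because the obstacle you single out is not where the work actually is. \cite[Theorem 2.7]{CL2} is already formulated for Kre\u{\i}n space completions that are continuously embedded in the ambient Hilbert space, so no additional control of the embeddings has to be verified. What does require an argument is the dictionary between the two settings: one sets $S := T^{-1}$, a bounded self-adjoint operator whose range is $\dom T$, and checks that the form $(T\cdot,\cdot)$ on $\dom T$ coincides with the inner product of \cite[(2.2)]{CL2}, namely $(f,g)_S = (Sx,y)$ for $f = Sx$, $g = Sy$; one then translates the hypothesis, observing that $T$ is semibounded if and only if $(0,\varepsilon)\subset\rho(S)$ or $(-\varepsilon,0)\subset\rho(S)$ for some $\varepsilon>0$. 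With this dictionary \cite[Theorem 2.7]{CL2} delivers both implications at once: ${\mathcal N}_T$ is a singleton precisely when $T$ is semibounded, and Theorem \ref{thm_1-1_only_T} transports this to ${\mathcal M}_T$. To complete your write-up you should replace the unverified ``compatibility check'' by this explicit identification; once that is done, your separate treatment of the easy direction becomes optional, since it is subsumed by the same citation.
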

\begin{proof}
By the assumption, the operator $S := T^{-1}$ is self-adjoint and bounded in $(H, \, (\cdot,\cdot))$.
The operator $T$ is semibounded if and only if $(c, \infty)$ or $(-\infty, -c)$
belongs to the resolvent set $\rho(T)$ for some $c > 0$.
With $\varepsilon := \frac{1}{c}$ this is further equivalent to
$(0,\varepsilon) \subset \rho(S)$ or $(-\varepsilon,0) \subset \rho(S)$.
Note that the inner product $(\cdot,\cdot)_S := (T\cdot,\cdot)$ on $\dom T$ (i.e. the range of $S$) satisfies
\[
(f,g)_S = (f,Tg) = (Sx, y) \quad \mbox{ if } f = Sx, \, g = Sy, \; x, y \in H
\]
and hence, $(\cdot,\cdot)_S$ coincides with the inner product defined in \cite[(2.2)]{CL2}.
Therefore, \cite[Theorem 2.7]{CL2} can be applied to $S$ and $(\cdot,\cdot)_S$
and it turns out that
$(\dom T, \, (\cdot,\cdot)_S)$ has a unique Kre\u{\i}n space completion 
which is continuously embedded in $(H, \, (\cdot,\cdot))$
if and only if $(0,\varepsilon) \subset \rho(S)$ or $(-\varepsilon,0) \subset \rho(S)$ for some $\varepsilon > 0$.
This means that the set ${\mathcal N}_T$ has precisely one element
if and only if $T$ is semibounded.
Then, the statement follows by means of Theorem \ref{thm_1-1_only_T}.
\end{proof}
By Theorem \ref{thm_unique} (or by classical arguments)
in the semibounded case each of the sets ${\mathcal M}_T$ and ${\mathcal N}_T$ from Theorem \ref{thm_1-1_only_T}
has only one element, namely the exceptional one.
From \cite[Theorem 5.2, proof part II]{CL2} one can conclude that in the non-semibounded case
the sets ${\mathcal M}_T$ and ${\mathcal N}_T$ have even infinitely many elements.
However, it is generally a non-trivial task to identify at least two different elements in these sets explicitly.
In the (two different) proofs of \cite[Theorem 5.2]{CL2} such constructions are presented on a certain abstract level.
Here, we try an approach to explicit examples.
First, we recall an example from \cite{F8}.
\begin{ex}\label{ex_SturmLiouville}
In the usual Hilbert space $L^2[-1,1]$ with inner product
$
(u,v) := \int_{-1}^1 u \overline{v} \, dx
$
consider the form $\st [\cdot,\cdot]$ given by 
\begin{eqnarray}
\dom \st & := & \{ u \in AC[-1,1] \, | \, \int_{-1}^1 |p| |u'|^2 \, dx < \infty, \,  u(-1)=u(1)=0  \},  \nonumber \\
\st [u,v] & := & \int_{-1}^1  u'\overline{v}' p \, dx
\quad (u, v \in \dom \st)     \nonumber
\end{eqnarray}
where the indefinite weight function $p$ is defined by
\begin{equation}\label{ex_singular_p}
p(x) := \left\{
\begin{array}{ll}
-\frac{2}{e}(1 - \log 2)^3  & \quad (x \in [-1,-\frac{2}{e}]) \\
 & \\
x \left|\log |x|\right|^3  & \quad (x \in (-\frac{2}{e},\frac{1}{e}) \setminus \{0\}) \\
 & \\
0  & \quad (x = 0) \\
 & \\
\frac{1}{e}  & \quad (x \in [\frac{1}{e},1])\\
\end{array} \right . \nonumber
\end{equation}
with Euler's constant $e\approx 2,718...$
In \cite[Section 2, Theorem 6.4]{F8} it is shown that $\st [\cdot,\cdot]$ is closed with gap point $0$
and for the function
\begin{equation}\label{ex_u}
u_0(x) := 
\left\{
\begin{array}{ll}
0  & \quad (x \in [-1,0]) \\
 & \\
\frac{8}{9 \, |\log x|^\frac{9}{8}}  & \quad (x \in (0,\frac{1}{e}) \\
 & \\
\frac{8}{9} - \frac{8(ex - 1)}{9(e-1)}  & \quad (x \in [\frac{1}{e},1])\\
\end{array} \right . 
\end{equation}
we have 
$u_0 \in \dom \st \setminus \dom |T|^\frac{1}{2}$.
Here, $T := T_{\st}$ is the associated operator in $L^2[-1,1]$ given by 
$Tu = -(pu')'$ defined on
\begin{eqnarray}
\dom T =  \{ u \in L^2[-1,1] & | &  u, pu' \in AC[-1,1], \, (pu')' \in L^2[-1,1], \,
\nonumber \\ 
 &  &
 u(-1)=u(1)=0  \}.
   \nonumber 
\end{eqnarray}
Consequently, the form $\st [\cdot,\cdot]$ is not regular
and hence, we can identify at least two elements in ${\mathcal M}_T$ in this example:
$\st [\cdot,\cdot]$ and its regularization.
However, here we cannot give an explict description of this regularization of $\st [\cdot,\cdot]$.

Furthermore, it is observed in \cite{F8} that the spectrum of $T$ is discrete and consists of a sequence of simple
eigenvalues
\begin{equation}
-\infty < ... < \lambda_{-2} < \lambda_{-1} <  0 
< \lambda_{1} < \lambda_{2} < ... < \infty            \nonumber
\end{equation}
accumulating only at $\infty$ and $-\infty$.
Let $u_n$ denote the corresponding eigenfunctions normed by
\begin{equation}
1 = |\lambda_n| (u_n,u_n) = (|T|^\frac{1}{2} u_n, |T|^\frac{1}{2}u_n) \quad (n \in {\mathbb Z} \setminus \{0\})    \nonumber
\end{equation}
and let $A_{\st}$ be the range restriction of $T$ to the Kre\u{\i}n space $(\dom \st, \, \st [\cdot,\cdot])$.
Then, for the eigenspectral function $E_{\dom \st}$ of $A_{\st}$ we have 
\begin{equation}\label{expansion}
E_{\dom \st}([-m,m])u = \Sigma_{|\lambda_n| \leq m, n \neq 0} \; \sgn(\lambda_n) \st[u,u_n] u_n
\end{equation}
according to \cite[Section 7]{F8}. 
Since the form $\st [\cdot,\cdot]$ is not regular infinity is a singular critical point of $A_{\st}$ by Theorem \ref{second}.
Indeed, by \cite[Theorem 6.4]{F8} for the function $u_0$ from (\ref{ex_u}) the ``eigenfunction expansion''
\begin{equation}\label{expansion_series}
\Sigma_{n \in {\mathbb Z} \setminus \{0\}} \; \sgn(\lambda_n) \st[u_0,u_n] u_n
\end{equation}
does not converge {\it unconditionally} in $(\dom \st, \, \st [\cdot,\cdot])$. 
In particular, by \cite[Theorem 6.4, Theorem 3.3]{F8} at least one of the two series 
\begin{eqnarray}
&& \Sigma_{n = 1}^{\infty} \; \st[u_0,u_n] u_n \, (= \lim_{m \rightarrow \infty} E_{\dom \st}([0,m])u_0 ),           \nonumber \\ 
&& \Sigma_{n = 1}^{\infty} \; \st[u_0,u_{-n}] u_{-n} \, (= \lim_{m \rightarrow \infty} - E_{\dom \st}([-m,0])u_0 )        \nonumber
\end{eqnarray}
does not converge in $(\dom \st, \, \st [\cdot,\cdot])$.
However, it remains as an open question in \cite{F8}
whether the rearrangement of the series (\ref{expansion_series}) according to (\ref{expansion}) does converge,
i.e. whether the limit $\lim_{m \rightarrow \infty} E_{\dom \st}([-m,m])u_0$ exists.
Below, we shall study a similar question in a different setting.
\end{ex}

\section{Examples with multiplication operators in ``model spaces''}\label{sec_example}

\subsection{A regular closed form}\label{subsec_example_regular}

We return to the setting of Examle \ref{ex_model_spaces} and change to the notations according to (\ref{rule_Phi_inverse}).
In this notation we consider the Hilbert space $(H, \, (\cdot,\cdot))$ given by
\[
H := K_- = L^2_{r_-}({\mathbb R}),
\qquad
(f,g) := \{f,g\}_- = (f,g)_{r_-} = \int_{-\infty}^\infty f \overline{g} \, r_- \, dx
\]
(according to (\ref{triplet_r})) and the form $\st [\cdot,\cdot]$ given by
\begin{equation}\label{form_t}
\dom \st := K = L^2_{r}({\mathbb R}),
\qquad
\st [f,g] := [f,g] = [f,g]_{r} = \int_{-\infty}^\infty f \overline{g} \, r \, dx 
.
\end{equation}
This form is closed with gap point $0$ and by Lemma \ref{lem_A_-}(iv)
the associated self-adjoint Hilbert space operator $T := T_{\st}$ in $H = L^2_{r_-}({\mathbb R})$ is given by
\begin{equation}\label{op_T}
\dom T = \dom A_- = L^2_{r_+}({\mathbb R}), \qquad (Tf)(x) = (A_-f)(x) = x f(x). 
\end{equation}
Here, we can explicitly describe the square root operator $|T|^{\frac{1}{2}}$ by
\[
\dom |T|^{\frac{1}{2}} = L^2_{r}({\mathbb R}), \qquad
(|T|^{\frac{1}{2}}f)(x) = \sqrt{|x|} f(x). 
\]
Therefore, we have $\dom \st = \dom |T|^{\frac{1}{2}}$
and hence, in this situation the form $\st [\cdot,\cdot]$ is a regular closed form,
i.e. a regular closure of $(T\cdot,\cdot)$.
This means that for the associated 
J-self-adjoint, J-non-negative and boundedly invertible range restriction $A_{\st} = A$
in the Kre\u{\i}n space $(L^2_{r}({\mathbb R}), \, [\cdot,\cdot]_{r})$
from Examle \ref{ex_model_spaces} infinity is not a singular critical point.
This is also evident in an explicit way:

Indeed, it is well known that the spectral measure $E$ of the multiplication operator $T$
is given by the multiplication operators
\begin{equation}\label{spectralMeasure}
E(\Delta) f := \chi_{\Delta} f \qquad (f \in L^2_{r_-}({\mathbb R}))
\end{equation}
for each measurable set $\Delta \subset {\mathbb R}$.
Here, $\chi_{\Delta}$ denotes the characteristic function of the set $\Delta$.
Then, by Theorem \ref{thm_spectralFunction}
the eigenspectral function $E_{\dom \st}$ of $A_{\st}$ 
is given by the restriction of the multiplication operator $E(\Delta)$
to $\dom \st = L^2_{r}({\mathbb R})$ for each $\Delta \in \Sigma$.
Therefore, for each $f \in L^2_{r}({\mathbb R})$ the limits
\begin{eqnarray}
&& \lim_{\lambda \rightarrow \infty} E_{\dom \st} ([\varepsilon,\lambda]) f 
= \lim_{\lambda \rightarrow \infty} \chi_{[\varepsilon,\lambda]} f = \chi_{[\varepsilon,\infty)} f,           \nonumber \\ 
&& \lim_{\lambda \rightarrow \infty} E_{\dom \st} ([-\lambda,\varepsilon]) f 
= \lim_{\lambda \rightarrow \infty} \chi_{[-\lambda,\varepsilon]} f = \chi_{(-\infty,\varepsilon]} f        \nonumber
\end{eqnarray}
exist in the Kre\u{\i}n space $(L^2_{r}({\mathbb R}), \, [\cdot,\cdot]_r)$
(using $\varepsilon > 0$ from (\ref{function_r_sign})).
Again, this shows that infinity is not a singular critical point of $A_{\st}$ .

\subsection{Non-regular closed forms}\label{subsec_example_nonregular}

We proceed with the setting of Section \ref{subsec_example_regular}.
However, we now identify an infinite number of non-regular closures 
(or Kre\u{\i}n space completions)
of the form $(T\cdot,\cdot)$ defined on $\dom T = L^2_{r_+}({\mathbb R})$.
This construction is inspired by \cite[Theorem 5.2, proof part II]{CL2}.
To this end, for $\alpha \in [0,2]$ we first introduce the even functions
\[
\eta_{\alpha}(x) := (\sqrt{|x|^{\alpha} + 1} - \sqrt{|x|^{\alpha}})|r(x)|, \quad
\omega_{\alpha}(x) := \sqrt{|x|^{\alpha}} \, |r(x)| \quad (x \in {\mathbb R}).
\]
Furthermore, for a complex function $f$ on ${\mathbb R}$ we define the function
\begin{equation}\label{Qalpha}
({\mathcal Q}_{\alpha}f)(x) := \sqrt{|x|^{\alpha} + 1 } \, f(x) - \sqrt{|x|^{\alpha}} \, f(-x)  \quad (x \in {\mathbb R})
\end{equation}
and the even and odd part
\[
f_e(x) := \frac{1}{2}(f(x) + f(-x)), \quad f_o(x) := \frac{1}{2}(f(x) - f(-x))  \quad (x \in {\mathbb R}).
\]
Since $\eta_{\alpha}$ and $\omega_{\alpha}$ are non-negative functions on ${\mathbb R}$
we can consider the associated weighted Hilbert spaces 
$ L^2_{\eta_{\alpha}}({\mathbb R})$ and $ L^2_{\omega_{\alpha}}({\mathbb R})$
equipped with the inner products
\begin{eqnarray}
(f,g)_{\eta_{\alpha}} := \int_{-\infty}^\infty f \overline{g} \, \eta_{\alpha} \, dx && \quad (f,g \in  L^2_{\eta_{\alpha}}({\mathbb R})),    \nonumber \\ 
(f,g)_{\omega_{\alpha}} :=  \int_{-\infty}^\infty f \overline{g} \, \omega_{\alpha} \, dx &&  \quad (f,g \in  L^2_{\omega_{\alpha}}({\mathbb R})).     \nonumber
\end{eqnarray}
In a first step these spaces will be used in order to construct  a Hilbert space structure on the space
\begin{equation}\label{dom_t_alpha}
\dom \st_{\alpha} := \{f \in  L^2_{r_-}({\mathbb R}) \, | \, f_e \in  L^2_{\eta_{\alpha}}({\mathbb R}), \, f_o \in  L^2_{\omega_{\alpha}}({\mathbb R}) \} .
\end{equation}
To this end, we first identify the ``extremal'' cases $\alpha = 0$ and $\alpha = 2$ and observe that $\eta_{\alpha}$ behaves like
\[
\widetilde{\eta}_{\alpha}(x) := \frac{|r(x)|}{\sqrt{|x|^{\alpha}}} \quad (x \in {\mathbb R} \setminus \{0\})
\]
for $\alpha \in [0,2]$.
Note that by (\ref{function_r_sign}) the functions $\omega_{\alpha}, \, \eta_{\alpha}, \, \widetilde{\eta}_{\alpha}$ vanish on $[-\varepsilon,\varepsilon]$.
\begin{lem}\label{lem_extremal}
The following statements hold true
\begin{enumerate}
\item[(i)] for $\alpha \in (0,2]$: $\quad \frac{\eta_{\alpha}(x)}{\widetilde{\eta}_{\alpha}(x)} \longrightarrow \frac{1}{2}  \quad (|x| \longrightarrow \infty)$ ,
\item[(ii)] for $\alpha = 0$: $\quad \eta_{0}(x) = (\sqrt{2} - 1) \widetilde{\eta}_{0}(x) = (\sqrt{2} - 1) |r(x)| , \quad \omega_0(x) = |r(x)|$,
\item[(iii)] for $\alpha \in [0,2]$: $\quad L^2_{\eta_{\alpha}}({\mathbb R}) = L^2_{\widetilde{\eta}_{\alpha}}({\mathbb R})$,
\item[(iv)] for $\alpha = 0$: $\quad L^2_{\eta_{0}}({\mathbb R}) = L^2_{\omega_{0}}({\mathbb R}) = \dom \st_{0} = L^2_{r}({\mathbb R}) = \dom \st$,
\item[(v)] for $\alpha = 2$: $\quad \widetilde{\eta}_{2}(x) = r_-(x) , \quad \omega_2(x) = r_+(x)$,
\item[(vi)] for $\alpha = 2$: $\quad L^2_{\eta_{2}}({\mathbb R}) = L^2_{r_-}({\mathbb R}), \quad L^2_{\omega_{2}}({\mathbb R}) = L^2_{r_+}({\mathbb R})$.
\end{enumerate}
\end{lem}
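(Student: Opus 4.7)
The plan is to treat the six items in three tiers: the pointwise identities (ii) and (v) by direct substitution, the asymptotic (i) by rationalization, and the $L^2$-space equalities (iii), (iv), (vi) as consequences of the first three.

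For (ii) one inserts $\alpha = 0$ into the definitions: since $|x|^0 = 1$, one gets $\eta_0 = (\sqrt{2}-1)|r|$, $\omega_0 = |r|$, and $\widetilde{\eta}_0 = |r|$. For (v) at $\alpha = 2$, using $\sqrt{|x|^2} = |x|$ together with $r_\pm(x) = x^{\pm 1}r(x)$ (non-negative by (\ref{function_r_sign})), one obtains $\widetilde{\eta}_2(x) = |r(x)|/|x| = r_-(x)$ and $\omega_2(x) = |x||r(x)| = r_+(x)$. For (i), the standard rationalization
\[
\sqrt{|x|^\alpha + 1} - \sqrt{|x|^\alpha} = \frac{1}{\sqrt{|x|^\alpha + 1} + \sqrt{|x|^\alpha}}
\]
yields
\[
\frac{\eta_\alpha(x)}{\widetilde{\eta}_\alpha(x)} = \frac{\sqrt{|x|^\alpha}}{\sqrt{|x|^\alpha + 1} + \sqrt{|x|^\alpha}},
\]
which tends to $\frac{1}{2}$ as $|x| \to \infty$ for any $\alpha > 0$.

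For (iii) I would combine (i) and (ii) to argue that $\eta_\alpha$ and $\widetilde{\eta}_\alpha$ are equivalent weights. For $\alpha = 0$ this is immediate, since by (ii) they differ by a positive constant. For $\alpha \in (0,2]$ the ratio above is continuous and strictly positive on the essential support $\{|x| > \varepsilon\}$ of $|r|$ from (\ref{function_r_sign}), and by (i) converges to $\frac{1}{2}$ at infinity; hence it is bounded above and below by positive constants on this set. Since both weights vanish a.e.\ on $[-\varepsilon,\varepsilon]$, this equivalence yields $L^2_{\eta_\alpha}({\mathbb R}) = L^2_{\widetilde{\eta}_\alpha}({\mathbb R})$.

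For (iv), (ii) first gives that $\eta_0$ and $\omega_0$ are positive multiples of $|r|$, so $L^2_{\eta_0}({\mathbb R}) = L^2_{\omega_0}({\mathbb R}) = L^2_r({\mathbb R}) = \dom \st$. To identify $\dom \st_0$ with $L^2_r({\mathbb R})$, note that $|r|$ is even (since $r$ is odd by (\ref{function_r_odd})), so the parity map $f \mapsto f(-\,\cdot\,)$ is an isometry of $L^2_r({\mathbb R})$ and consequently $f \in L^2_r({\mathbb R})$ if and only if both $f_e, f_o \in L^2_r({\mathbb R})$; combined with the inclusion $L^2_r({\mathbb R}) \subset L^2_{r_-}({\mathbb R})$ from (\ref{triplet_model}), this gives (iv). Finally, (vi) follows at once from (v) and (iii). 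The only place where care is needed is (iii): one must confirm that $\eta_\alpha/\widetilde{\eta}_\alpha$ stays bounded away from zero uniformly on the support of $|r|$, and this is secured by the gap $[-\varepsilon,\varepsilon]$ in (\ref{function_r_sign}), which prevents $\sqrt{|x|^\alpha}$ from vanishing on the relevant set.
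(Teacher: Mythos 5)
Your proposal is correct and follows essentially the same route as the paper: the paper obtains the two-sided bound on $\eta_\alpha/\widetilde{\eta}_\alpha$ via the mean value theorem for $\sqrt{t}$ where you use rationalization (an equivalent elementary computation), deduces (iii) from the resulting weight equivalence, and dismisses the remaining items as obvious — which your direct substitutions and the parity argument for $\dom\st_0$ merely spell out in more detail.
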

\begin{proof}
By the mean value theorem (applied to $y(t) := \sqrt{t}$) we know that
\[
\sqrt{|x|^{\alpha} + 1} - \sqrt{|x|^{\alpha}} = \frac{1}{2 \sqrt{\xi}}
\]
with some $\xi \in [|x|^{\alpha}, |x|^{\alpha} +1]$.
Therefore, for all $x > \epsilon$ we can estimate
\[
\frac{\eta_{\alpha}(x)}{\widetilde{\eta}_{\alpha}(x)} = \sqrt{|x|^{\alpha}} (\sqrt{|x|^{\alpha} + 1} - \sqrt{|x|^{\alpha}}) \leq \frac{1}{2}
\]
and
\[
\frac{\eta_{\alpha}(x)}{\widetilde{\eta}_{\alpha}(x)} \geq \frac{\sqrt{|x|^{\alpha}}}{2 \sqrt{|x|^{\alpha} + 1}}
= \frac{1}{2 \sqrt{1 + |x|^{-\alpha}}}
\longrightarrow \frac{1}{2}  \quad (x \longrightarrow \infty)
\]
for $\alpha \neq 0$. This implies (i) and (iii) follows immediately.
The other statements are obvious.
\end{proof}
Next, we study the relations between these spaces for $\alpha \in [0,2]$.
\begin{lem}\label{lem_int_exist}
\begin{enumerate}
\item[(i)] We have
\[
L^2_{r_+}({\mathbb R}) \subset L^2_{\omega_{\alpha}}({\mathbb R}) \subset L^2_{\eta_{\alpha}}({\mathbb R}) \subset L^2_{r_-}({\mathbb R}).
\]
\item[(ii)] There is a constant $c > 0$ such that 
\[
(f,f)_{\omega_{\alpha}} \leq c (f,f)_{r_+}, \quad (g,g)_{\eta_{\alpha}} \leq c (g,g)_{\omega_{\alpha}}, \quad (h,h)_{r_-} \leq c (h,h)_{\eta_{\alpha}}
\]
for all $f \in L^2_{r_+}({\mathbb R}), \; g \in L^2_{\omega_{\alpha}}({\mathbb R}), \; h \in L^2_{\eta_{\alpha}}({\mathbb R})$.
\item[(iii)] For $f \in L^2_{\omega_{\alpha}}({\mathbb R})$ also the function $f(-x)$ belongs to $L^2_{\omega_{\alpha}}({\mathbb R})$ and we have
\begin{eqnarray}
\int_{-\infty}^\infty |f(-x)|^2 \, \omega_{\alpha}(x) \, dx & = & \int_{-\infty}^\infty |f(x)|^2 \, \omega_{\alpha}(x)  \, dx ,    \nonumber \\ 
\int_{-\infty}^\infty f(-x) \overline{f(x)} \, \omega_{\alpha}(x)  \, dx & = & \int_{-\infty}^\infty f(x) \overline{f(-x)} \, \omega_{\alpha}(x)  \, dx.     \nonumber
\end{eqnarray}
\item[(iv)] The statement of (iii) remains true with $\omega_{\alpha}$ replaced by $\eta_{\alpha}$.
\item[(v)] For $f, g \in L^2_{\omega_{\alpha}}({\mathbb R})$ the following intergal exists and we have
\begin{equation}\label{pos_inner_prod}
\int_{-\infty}^\infty ({\mathcal Q}_{\alpha}f) \overline{g} \, |r| \, dx
= 2 (f_o,g_o)_{\omega_{\alpha}} + (f,g)_{\eta_{\alpha}} .
\end{equation}
\end{enumerate}
\end{lem}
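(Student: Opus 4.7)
The plan is to reduce (i) and (ii) to simple pointwise comparisons of weight functions, to handle (iii) and (iv) by a reflection substitution using the evenness of the weights, and to prove (v) through an algebraic decomposition of $\sqrt{|x|^\alpha+1}\,|r(x)|$ combined with an even/odd splitting of $f$ and $g$.

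For parts (i) and (ii), since all four weight functions vanish on $[-\varepsilon,\varepsilon]$ by (\ref{function_r_sign}), it suffices to compare them pointwise on $|x|>\varepsilon$. A direct calculation yields $\omega_\alpha(x)/r_+(x) = |x|^{\alpha/2-1}$, which is bounded above by $\varepsilon^{\alpha/2-1}$ since $\alpha\leq 2$ makes the exponent non-positive. The algebraic identity $\sqrt{a+1}-\sqrt{a}=1/(\sqrt{a+1}+\sqrt{a})$ applied at $a=|x|^\alpha$ gives the estimate $\eta_\alpha(x)\leq \omega_\alpha(x)/(2|x|^\alpha)$, bounded above by a constant times $\omega_\alpha(x)$ on $|x|>\varepsilon$. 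The same identity rewritten produces $r_-(x)/\eta_\alpha(x) = (\sqrt{|x|^\alpha+1}+\sqrt{|x|^\alpha})/|x|$, which is bounded on $|x|>\varepsilon$ using $\alpha\leq 2$ (split the region into $|x|\geq 1$ and $\varepsilon<|x|<1$). Taking the maximum of the three resulting constants produces a single $c$ that validates (ii) and immediately implies the inclusions (i).

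For (iii) and (iv) the decisive property is that $\omega_\alpha$ and $\eta_\alpha$ are even. I would perform the substitution $y=-x$ in the integrals: the sign from $dy=-dx$ is absorbed by the reversal of limits of integration, while evenness of the weight leaves its value unchanged. This directly produces both displayed identities and, applied once more, shows that $f(-\,\cdot\,)\in L^2_{\omega_\alpha}({\mathbb R})$ (respectively $L^2_{\eta_\alpha}$) whenever $f$ is.

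For (v) the crucial algebraic observation is that $\sqrt{|x|^\alpha+1}\,|r(x)| = \eta_\alpha(x)+\omega_\alpha(x)$, immediate from the definition of $\eta_\alpha$. Substituting this into (\ref{Qalpha}) yields
\[
({\mathcal Q}_\alpha f)(x)\,\overline{g(x)}\,|r(x)| = f(x)\overline{g(x)}\,\eta_\alpha(x) + \bigl(f(x)-f(-x)\bigr)\overline{g(x)}\,\omega_\alpha(x).
\]
By (i) both $f$ and $g$ lie in $L^2_{\eta_\alpha}$, so the first piece integrates to $(f,g)_{\eta_\alpha}$. For the second, I split $f=f_e+f_o$ and $g=g_e+g_o$; since $\omega_\alpha$ is even, the cross integrals $\int f_e\overline{g_o}\,\omega_\alpha\,dx$ and $\int f_o\overline{g_e}\,\omega_\alpha\,dx$ vanish (odd integrands), while $f(x)-f(-x)=2f_o(x)$, giving $2(f_o,g_o)_{\omega_\alpha}$ and thus (\ref{pos_inner_prod}). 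Absolute convergence of every integral follows from the weight comparisons in (ii) together with Cauchy--Schwarz in $L^2_{\omega_\alpha}$ and $L^2_{\eta_\alpha}$, using (iii) to place $f_e,f_o,g_e,g_o$ in $L^2_{\omega_\alpha}$.

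The only real obstacle is bookkeeping: tracking a single constant $c$ that works uniformly in (ii) and handling the endpoint cases $\alpha=0$ and $\alpha=2$ (treated in Lemma \ref{lem_extremal}) without letting the bounds degenerate. None of the five parts requires any argument beyond elementary estimates and the reflection substitution.
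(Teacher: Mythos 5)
Your argument is correct, and for parts (i)--(iv) it is essentially the paper's own proof: both reduce (i) and (ii) to boundedness of the ratios $\omega_\alpha/r_+$, $\eta_\alpha/\omega_\alpha$, $r_-/\eta_\alpha$ outside $[-\varepsilon,\varepsilon]$, and both handle (iii), (iv) by the reflection substitution and evenness of the weights. The only genuine divergence is in (v). The paper first establishes the identity for the diagonal case $g=f$ (rewriting $|f(x)|^2-f(-x)\overline{f(x)}$ via (iii) to produce $\tfrac12|f(x)-f(-x)|^2$) and then recovers the sesquilinear identity by polarization, which requires the extra remark that both sides are symmetric sesquilinear forms. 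You instead compute directly for general $f,g$, using the same key identity $\sqrt{|x|^\alpha+1}\,|r|=\eta_\alpha+\omega_\alpha$ but then splitting $g=g_e+g_o$ and killing the cross terms $\int f_o\overline{g_e}\,\omega_\alpha\,dx$ by parity. Your route is slightly more direct and avoids polarization; its one obligation --- that the cross integrals are absolutely convergent (via (iii) and Cauchy--Schwarz) so that ``odd integrand integrates to zero'' is legitimate --- is something you explicitly discharge. Both proofs are complete; the difference is purely one of packaging.
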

\begin{proof}
(i), (ii):
For $\alpha \in (0,2)$
we obtain the convergence results
\[
\frac{\omega_{\alpha}(x)}{r_+(x)} = |x|^{\frac{\alpha - 2}{2}}  \searrow 0, \quad
\frac{\eta_{\alpha}(x)}{\omega_{\alpha}(x)} = \sqrt{1 + |x|^{-\alpha} } - 1 \searrow 0 \quad (x \longrightarrow \infty)
\]
and by Lemma \ref{lem_extremal}(i)
\[
\frac{r_-(x)}{\eta_{\alpha}(x)} 
\leq d \frac{r_-(x)}{\widetilde{\eta}_{\alpha}(x)} 
= d \frac{\sqrt{|x|^{\alpha}}}{|x|}
= d |x|^{\frac{\alpha - 2}{2}}  \searrow 0 \quad (x \longrightarrow \infty)
\]
with some $d > 0$.
Therefore, by (\ref{function_r_sign}) these fractions are bounded
and for $\alpha \in \{0,2\}$ this is also true by similar calculations.
This implies (i) und (ii) with a common upper bound $c > 0$. Indeed, e.g. for $f \in L^2_{r_+}({\mathbb R})$ we have
\begin{eqnarray}
& & \int_{-\infty}^\infty |f(x)|^2 \, \omega_{\alpha}(x)  \, dx        \nonumber \\
& = & \int_{-\infty}^{-\varepsilon} |f(x)|^2 \, \frac{\omega_{\alpha}(x)}{r_+(x)} r_+(x)  \, dx
 + \int_{\varepsilon}^\infty |f(x)|^2 \, \frac{\omega_{\alpha}(x)}{r_+(x)} r_+(x)  \, dx        \nonumber \\
& \leq & c \int_{-\infty}^\infty |f(x)|^2 \,  r_+(x)  \, dx .   \nonumber
\end{eqnarray}

(iii) For $f \in L^2_{\omega_{\alpha}}({\mathbb R})$ we have
\[
\int_{-\infty}^\infty |f(-x)|^2 \, \omega_{\alpha}(x)  \, dx 
= \int_{-\infty}^\infty |f(t)|^2 \, \omega_{\alpha}(-t)  \, dt 
= \int_{-\infty}^\infty |f(t)|^2 \, \omega_{\alpha}(t)  \, dt .
\]
since $\omega_{\alpha}$ is an even function.
The second equation in (iii) follows similarly.

(iv) can be shown by the same arguments as in (iii) since also $\eta_{\alpha}$ is even.

(v) First, for $g = f$ we calculate
\begin{eqnarray}
& & \int_{-\infty}^\infty \left(\sqrt{|x|^{\alpha} + 1 } f(x) - \sqrt{|x|^{\alpha}} \, f(-x)\right) \overline{f(x)} \, |r(x)| \, dx  
\nonumber \\
& = & \int_{-\infty}^\infty \left(\eta_{\alpha}(x)f(x) + \omega_{\alpha}(x)f(x) - \omega_{\alpha}(x)f(-x) \right)  \overline{f(x)} \, dx  \nonumber \\
& = & \int_{-\infty}^\infty \left(|f(x)|^2  - f(-x)\overline{f(x)} \right)  \omega_{\alpha}(x) \, dx
+ \int_{-\infty}^\infty |f(x)|^2 \eta_{\alpha}(x) \, dx   .      \nonumber
\end{eqnarray}
Writing $|f(x)|^2  - f(-x)\overline{f(x)}$ as $\frac{1}{2}(|f(x)|^2  - 2 f(-x)\overline{f(x)} + |f(x)|^2)$
and using (iii)
we end up with
\begin{eqnarray}
& & \int_{-\infty}^\infty ({\mathcal Q}_{\alpha}f) \overline{f} \, |r| \, dx 
\nonumber \\
& = & \frac{1}{2} \int_{-\infty}^\infty (|f(x)|^2  - f(-x) \overline{f(x)} - f(x) \overline{f(-x)} + |f(-x)|^2) \, \omega_{\alpha}(x)  \, dx   
\nonumber \\
&   & \qquad  
+  \int_{-\infty}^\infty |f(x)|^2 \, \eta_{\alpha}(x)  \, dx  \nonumber \\
& =  &  \frac{1}{2} \int_{-\infty}^\infty |f(x) - f(-x)|^2 \, \omega_{\alpha}(x)  \, dx +  \int_{-\infty}^\infty |f(x)|^2 \, \eta_{\alpha}(x)  \, dx  \nonumber \\
& = &   2 (f_o,f_o)_{\omega_{\alpha}} + (f,f)_{\eta_{\alpha}} .     \nonumber 
\end{eqnarray}
Finally, the statement (v) follows by the polarization identity.
Note that this conclusion is allowed since obviously,
$2 (f_o,g_o)_{\omega_{\alpha}} + (f,g)_{\eta_{\alpha}}$
defines a symmetric sesquilinear form on $L^2_{\omega_{\alpha}}({\mathbb R})$
and this is also true for $\int_{-\infty}^\infty ({\mathcal Q}_{\alpha}f) \overline{g} \, |r| \, dx$.
Indeed, this follows as in (iii) by the calculation
\[
\int_{-\infty}^\infty \sqrt{|x|^{\alpha}} \, f(-x)\overline{g}(x) |r(x)|  \, dx = \int_{-\infty}^\infty \sqrt{|t|^{\alpha}} \, f(t)\overline{g}(-t) |r(t)|  \, dx 
\]
for $f,g \in L^2_{\omega_{\alpha}}({\mathbb R})$.
\end{proof}
By means of (\ref{pos_inner_prod}) we can now define a Hilbert space structure on $\dom \st_{\alpha}$:
\begin{prop}\label{prop_Hilbert_space}
Let $\alpha \in [0,2]$. Then, the following statements hold true:
\begin{enumerate}
\item[(i)]  $\dom \st_{\alpha}$ is a Hilbert space with the inner product
\begin{equation}\label{t_alpha_pos}
\st_{\alpha}(f,g) := 2 (f_o,g_o)_{\omega_{\alpha}} + (f,g)_{\eta_{\alpha}} \quad (f, g \in \dom \st_{\alpha}).
\end{equation}
\item[(ii)]  We have the space triplet
\[
L^2_{r_+}({\mathbb R}) \subset \dom \st_{\alpha} \subset L^2_{r_-}({\mathbb R}).
\]
\item[(iii)]  Each of the inclusions in (ii) is continuous with respect to the corresponding Hilbert space inner products, i.e.
the inclusion of $(L^2_{r_+}({\mathbb R}), \, (\cdot,\cdot)_{r_+})$ in $(\dom \st_{\alpha}, \, \st_{\alpha} (\cdot,\cdot))$ and of this space in $(L^2_{r_-}({\mathbb R}), \, (\cdot,\cdot)_{r_-})$.
\end{enumerate}
\end{prop}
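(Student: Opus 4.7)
The plan is to exploit the even/odd decomposition of functions. Since both weight functions $\eta_{\alpha}$ and $\omega_{\alpha}$ are even, the cross terms $\int f_e \overline{g_o}\,\eta_{\alpha}\,dx$ and $\int f_o\overline{g_e}\,\eta_{\alpha}\,dx$ have odd integrands and vanish, and similarly for $\omega_{\alpha}$. Thus $(f,g)_{\eta_{\alpha}} = (f_e,g_e)_{\eta_{\alpha}} + (f_o,g_o)_{\eta_{\alpha}}$, and the form (\ref{t_alpha_pos}) can be rewritten as
\[
\st_{\alpha}(f,g) \;=\; (f_e,g_e)_{\eta_{\alpha}} + (f_o,g_o)_{\eta_{\alpha}} + 2(f_o,g_o)_{\omega_{\alpha}}.
\]
This expression is manifestly sesquilinear and symmetric; well-definedness on $\dom\st_{\alpha}$ follows from (\ref{dom_t_alpha}) together with the inclusion $L^2_{\omega_{\alpha}}({\mathbb R}) \subset L^2_{\eta_{\alpha}}({\mathbb R})$ of Lemma \ref{lem_int_exist}(i). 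Positivity is immediate. If $\st_{\alpha}(f,f)=0$ then $f_e$ and $f_o$ must vanish a.e.\ on the support of $\eta_{\alpha}$, which by (\ref{function_r_sign}) is ${\mathbb R}\setminus [-\varepsilon,\varepsilon]$; since $r_-$ also vanishes on $[-\varepsilon,\varepsilon]$, this forces $f=0$ in $L^2_{r_-}({\mathbb R})$.

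For completeness I would identify $(\dom\st_{\alpha},\st_{\alpha})$ with the orthogonal direct sum of the closed subspace of even functions in $L^2_{\eta_{\alpha}}({\mathbb R})$ and the closed subspace of odd functions in $L^2_{\omega_{\alpha}}({\mathbb R})$, via the maps $f \mapsto (f_e,f_o)$ and $(g,h)\mapsto g+h$. The inverse map lands back in $L^2_{r_-}({\mathbb R})$ by Lemma \ref{lem_int_exist}(i). Since Lemma \ref{lem_int_exist}(ii) bounds $\|f_o\|^2_{\eta_{\alpha}}$ by $\|f_o\|^2_{\omega_{\alpha}}$, the $\st_{\alpha}$-norm is equivalent to $\|f_e\|^2_{\eta_{\alpha}} + \|f_o\|^2_{\omega_{\alpha}}$, the squared norm on a genuine Hilbert direct sum, and (i) follows.

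Parts (ii) and (iii) reduce to a single estimate. For $f \in L^2_{r_+}({\mathbb R})$, evenness of $r_+$ gives $\|f_e\|^2_{r_+} + \|f_o\|^2_{r_+} = (f,f)_{r_+}$, and Lemma \ref{lem_int_exist}(i,ii) then yields a constant $c'>0$ with
\[
\st_{\alpha}(f,f) \;=\; \|f\|^2_{\eta_{\alpha}} + 2\|f_o\|^2_{\omega_{\alpha}} \;\leq\; c'(f,f)_{r_+},
\]
which establishes both $L^2_{r_+}({\mathbb R}) \subset \dom\st_{\alpha}$ and continuity of that embedding. The reverse inclusion $\dom\st_{\alpha} \subset L^2_{r_-}({\mathbb R})$ is built into (\ref{dom_t_alpha}), and continuity in the form $(f,f)_{r_-} \leq c\,\st_{\alpha}(f,f)$ comes directly from Lemma \ref{lem_int_exist}(ii). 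The main (mild) obstacle is the parity bookkeeping underlying (i); once the orthogonal direct sum identification is in place, the remaining verifications are routine.
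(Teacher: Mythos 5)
Your proof is correct and follows essentially the same route as the paper: both identify $(\dom \st_{\alpha}, \st_{\alpha}(\cdot,\cdot))$ with the orthogonal direct sum of the even part of $L^2_{\eta_{\alpha}}({\mathbb R})$ and the odd part of $L^2_{\omega_{\alpha}}({\mathbb R})$, establish norm equivalence via Lemma \ref{lem_int_exist}(ii), and obtain (ii) and (iii) from the chain of continuous inclusions in Lemma \ref{lem_int_exist}(i),(ii). Your explicit check of definiteness (using that $\eta_{\alpha}$ and $r_-$ share the support ${\mathbb R}\setminus[-\varepsilon,\varepsilon]$) is a small addition the paper leaves implicit, but otherwise the arguments coincide.
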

\begin{proof}
(i) First, we observe that 
$L_o := \{ f \in L^2_{\omega_{\alpha}}({\mathbb R}) \, | \, f $ is odd $ \}$ is a Hilbert space with $(\cdot,\cdot)_{\omega_{\alpha}}$.
Indeed, $L_o$ is the kernel of $I + S$ where $S$ is the self-adjoint and 
bounded operator $(Sf)(x) := f(-x)$ 
in $(L^2_{\omega_{\alpha}}({\mathbb R}), \, (\cdot,\cdot)_{\omega_{\alpha}})$
(cf. Lemma \ref{lem_int_exist} (iii)).
Similarly, also 
$L_e := \{ f \in L^2_{\eta_{\alpha}}({\mathbb R}) \, | \, f $ is even $ \}$ is a Hilbert space with $(\cdot,\cdot)_{\eta_{\alpha}}$
(cf. Lemma \ref{lem_int_exist} (iv)).
Therefore, the orthogonal sum $\dom \st_{\alpha} = L_e \oplus L_o$ is also a Hilbert space with the corresponding inner product
$(f_e,g_e)_{\eta_{\alpha}} + (f_o,g_o)_{\omega_{\alpha}}$
defined for $f_e, g_e \in L_e$ and $f_o, g_o \in L_o$.
However, this inner product is equivalent to $\st_{\alpha} (\cdot,\cdot)$.
Indeed, for all $f = f_e + f_o \in \dom \st_{\alpha}$
we have
\[
(f_e,f_e)_{\eta_{\alpha}} \leq (f_e,f_e)_{\eta_{\alpha}} + (f_o,f_o)_{\eta_{\alpha}} = (f,f)_{\eta_{\alpha}}
\]
by the orthogonality of $f_e$ and $f_o$ with respect to $(\cdot,\cdot)_{\eta_{\alpha}}$ 
and similarly, using Lemma \ref{lem_int_exist} (ii) we have
\[
(f,f)_{\eta_{\alpha}} = (f_e,f_e)_{\eta_{\alpha}} + (f_o,f_o)_{\eta_{\alpha}} \leq (f_e,f_e)_{\eta_{\alpha}} + c (f_o,f_o)_{\omega_{\alpha}}.
\]

(ii) and (iii) follow similarly from Lemma \ref{lem_int_exist}. First, for
$f = f_e + f_o \in L^2_{r_+}({\mathbb R})$
we have by Lemma \ref{lem_int_exist} (ii)
\[
\st_{\alpha} (f,f) 
\leq 2 c (f_o,f_o)_{r_+} + c^2 (f,f)_{r_+} \leq (2c + c^2) (f,f)_{r_+}
\]
since $f_e$ and $f_o$ are also othogonal with respect to $(\cdot,\cdot)_{r_+}$.
Furthermore, for
$f = f_e + f_o \in \dom \st_{\alpha}$
we have by Lemma \ref{lem_int_exist} (ii)
\[
(f,f)_{r_-}
\leq (f_o,f_o)_{r_-} + (f,f)_{r_-} \leq c^2 (f_o,f_o)_{\omega_{\alpha}} + c (f,f)_{\eta_{\alpha}} \leq d \, \st_{\alpha} (f,f) 
\]
with $d := \max \{c, \frac{c^2}{2}\}$.
\end{proof}
Now, we have a closer look at some set differences using the functions
\begin{equation}\label{f_g_tau}
f_{\tau}(x) := \frac{\sgn(x)}{\sqrt{|r(x)|} |x|^{\frac{\tau + 2}{4}}}, \quad
g_{\tau}(x) := \frac{1}{\sqrt{|r(x)|} |x|^{\frac{2 - \tau}{4}}}
\quad (x \in {\mathbb R} \setminus [-\varepsilon, \varepsilon])
\end{equation}
for $\tau \in [0,2]$ and $\varepsilon > 0$ according to (\ref{function_r_sign}).
Formally, these functions are extended by $f_{\tau}(x) := 0 =: g_{\tau}(x)$ for $x \in [-\varepsilon, \varepsilon]$.
In particular, $f_{\tau}$ is odd and $g_{\tau}$ is even.
\begin{lem}\label{lem_set_diff}
\begin{enumerate}
For $0 \leq \alpha < \beta \leq 2$ the following statements hold true:
\item[(i)] $L^2_{\omega_{\beta}}({\mathbb R}) \subset L^2_{\omega_{\alpha}}({\mathbb R}) \subset L^2_{r}({\mathbb R}$,
\item[(ii)] $f_{\beta} \in L^2_{\omega_{\alpha}}({\mathbb R}) \setminus L^2_{\omega_{\beta}}({\mathbb R})$,
\item[(iii)] $f_{\beta} \in \dom \st_{\alpha} \setminus \dom \st_{\beta}$,
\item[(iv)] $ L^2_{r}({\mathbb R} \subset L^2_{\eta_{\alpha}}({\mathbb R}) \subset L^2_{\eta_{\beta}}({\mathbb R})$,
\item[(v)] $g_{\alpha} \in L^2_{\eta_{\beta}}({\mathbb R}) \setminus L^2_{\eta_{\alpha}}({\mathbb R})$,
\item[(vi)] $g_{\alpha} \in \dom \st_{\beta} \setminus \dom \st_{\alpha}$.
\end{enumerate}
\end{lem}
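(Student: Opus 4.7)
\end{lem}

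The plan is to reduce all six claims to pointwise comparisons of the weights on the set $\{|x| > \varepsilon\}$ (outside which every weight in sight vanishes by (\ref{function_r_sign})), together with a small number of one-parameter integrability tests at infinity. On this set one has the exact identities
\[
\frac{\omega_\beta(x)}{\omega_\alpha(x)} = |x|^{(\beta-\alpha)/2}, \quad
\frac{\omega_\alpha(x)}{|r(x)|} = |x|^{\alpha/2}, \quad
\frac{\widetilde{\eta}_\beta(x)}{\widetilde{\eta}_\alpha(x)} = |x|^{-(\beta-\alpha)/2}, \quad
\frac{\widetilde{\eta}_\alpha(x)}{|r(x)|} = |x|^{-\alpha/2},
\]
and Lemma \ref{lem_extremal}(i),(iii) lets me replace $\eta_\tau$ by $\widetilde{\eta}_\tau$ whenever convenient, since the quotient of the two weights is bounded above and below by positive constants on the common support.

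I would first dispose of (i) and (iv). Since $|x| \geq \varepsilon$ on the supports, the first and second identities above yield the pointwise bounds $\omega_\alpha \leq \varepsilon^{-(\beta-\alpha)/2}\, \omega_\beta$ and $|r| \leq \varepsilon^{-\alpha/2}\, \omega_\alpha$ a.e.\ on ${\mathbb R}$, which together give $L^2_{\omega_\beta}({\mathbb R}) \subset L^2_{\omega_\alpha}({\mathbb R}) \subset L^2_r({\mathbb R})$. Symmetrically, the third and fourth identities give $\widetilde{\eta}_\beta \leq \varepsilon^{-(\beta-\alpha)/2}\, \widetilde{\eta}_\alpha$ and $\widetilde{\eta}_\alpha \leq \varepsilon^{-\alpha/2}\, |r|$, from which (iv) follows via Lemma \ref{lem_extremal}(iii).

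For (ii) and (v) the whole computation collapses to the two one-line identities on $\{|x| > \varepsilon\}$
\[
|f_\beta(x)|^2 \, \omega_\tau(x) = |x|^{-(\beta+2-\tau)/2}, \qquad
|g_\alpha(x)|^2 \, \widetilde{\eta}_\tau(x) = |x|^{-(2-\alpha+\tau)/2},
\]
each of which is integrable at infinity precisely when the exponent strictly exceeds one, i.e.\ when $\tau < \beta$ and when $\tau > \alpha$, respectively. Specialising $\tau = \alpha$ resp.\ $\tau = \beta$ (and then the reverse choices) yields $f_\beta \in L^2_{\omega_\alpha}({\mathbb R}) \setminus L^2_{\omega_\beta}({\mathbb R})$ and $g_\alpha \in L^2_{\eta_\beta}({\mathbb R}) \setminus L^2_{\eta_\alpha}({\mathbb R})$.

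Finally, (iii) and (vi) follow from parity: $f_\beta$ is odd and $g_\alpha$ is even, so in the decomposition defining $\dom \st_\tau$ in (\ref{dom_t_alpha}) only the corresponding part is non-zero, namely $f_\beta$ itself resp.\ $g_\alpha$ itself. A brief power count shows $|f_\beta|^2 \, r_- = |x|^{-(\beta+4)/2}$ and $|g_\alpha|^2 \, r_- = |x|^{-(4-\alpha)/2}$ on the support, and both are integrable at infinity since $\beta > 0$ and $\alpha < 2$, so $f_\beta, g_\alpha \in L^2_{r_-}({\mathbb R})$. Combined with (ii) and (v), this produces the two set differences. The main obstacle here is not any single step but keeping the exponent bookkeeping straight; the threshold $\tau = \alpha$ resp.\ $\tau = \beta$ separates the two sides of each membership, and since the inequality $\alpha < \beta$ is strict, the boundary cases land cleanly on the correct sides.
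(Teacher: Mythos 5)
Your proposal is correct and follows essentially the same route as the paper: the inclusions (i), (iv) via pointwise comparison of the weights on $\{|x|>\varepsilon\}$ (with $\eta_\tau$ replaced by $\widetilde{\eta}_\tau$ via Lemma \ref{lem_extremal}), the memberships (ii), (v) via the same explicit power integrals with integrability threshold at $\tau=\beta$ resp.\ $\tau=\alpha$, and (iii), (vi) via the parity of $f_\beta$ and $g_\alpha$. Your extra check that $f_\beta, g_\alpha \in L^2_{r_-}({\mathbb R})$ is harmless but already implied by (ii), (v) together with the chain $L^2_{\omega_\tau}({\mathbb R}) \subset L^2_{\eta_\tau}({\mathbb R}) \subset L^2_{r_-}({\mathbb R})$ from Lemma \ref{lem_int_exist}(i).
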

\begin{proof}
(i)  For $\alpha = 0$ we have the identity $L^2_{\omega_{0}}({\mathbb R}) = L^2_{r}({\mathbb R})$. Therefore,
the inclusions in (i) follow like in Lemma \ref{lem_int_exist} (i) from the convergence
\[
\frac{\omega_{\alpha}(x)}{\omega_{\beta}(x)} = |x|^{\frac{\alpha - \beta}{2}}  \searrow 0 \quad (x \longrightarrow \infty).
\]

(ii) For $0 \leq \gamma \leq 2$ we have
\[
\int_{-\infty}^\infty |f_{\beta}(x)|^2 \, \omega_{\gamma}(x)  \, dx 
= \int_{-\infty}^{-\varepsilon} |x|^{\frac{\gamma - \beta - 2}{2}}  \, dx
+ \int_{\varepsilon}^\infty |x|^{\frac{\gamma - \beta - 2}{2}}  \, dx
\]
which is a finite number if and only if $\gamma < \beta$.

(iii) follows from (ii) since $f_{\beta}$ is odd.

(iv) For $\alpha = 0$ we have the identity $L^2_{\eta_{0}}({\mathbb R}) = L^2_{r}({\mathbb R})$ .
Therefore, the inclusions in (iv) follow like in Lemma \ref{lem_int_exist} (i) from the convergence
\[
\frac{\eta_{\beta}(x)}{\eta_{\alpha}(x)} \leq d \frac{\widetilde{\eta}_{\beta}(x)}{\widetilde{\eta}_{\alpha}(x)} = d |x|^{\frac{\alpha - \beta}{2}}  \searrow 0 \quad (x \longrightarrow \infty)
\]
with some $d > 0$ using Lemma \ref{lem_extremal}(i)(ii).

(v) For $0 \leq \gamma \leq 2$ we have
\[
\int_{-\infty}^\infty |g_{\alpha}(x)|^2 \, \widetilde{\eta}_{\gamma}(x)  \, dx 
= \int_{-\infty}^{-\varepsilon} |x|^{\frac{\alpha - 2 - \gamma}{2}}  \, dx
+ \int_{\varepsilon}^\infty |x|^{\frac{\alpha - 2 - \gamma}{2}}  \, dx
\]
which is a finite number if and only if $\gamma > \alpha$. Then, we use Lemma \ref{lem_extremal}(iii).

(vi) follows from (v) since $g_{\alpha}$ is even.
\end{proof}
In the next step we shall construct a Kre\u{\i}n space structure on the Hilbert space $(\dom \st_{\alpha}, \, \st_{\alpha} (\cdot,\cdot))$
by means of the representation (\ref{pos_inner_prod}) of $\st_{\alpha} (\cdot,\cdot)$.
To this end, we first 
introduce two operators on the linear space of weighted functions with compact support
\[
L_{r, 0}^2 = \{f \in L^2_{r}({\mathbb R}) \, | \, f(x) = 0 \mbox{ a.e. on } {\mathbb R} \setminus [-k, k] \mbox{ with some } k > 0 \}.
\]
Using the expression ${\mathcal Q}_{\alpha}f$ from (\ref{Qalpha})
the operators $Q_{\alpha}$ and $S_{\alpha}$ given by
\[
Q_{\alpha}f := {\mathcal Q}_{\alpha}f, \quad (S_{\alpha}f)(x) := \sgn (x) ({\mathcal Q}_{\alpha}f)(x) \quad (f \in L_{r, 0}^2)
\]
are well defined in $L_{r, 0}^2$
since for each $f \in L_{r, 0}^2$ also ${\mathcal Q}_{\alpha}f$ has a compact support.
Furthermore, we consider the kernels
\begin{equation}\label{sets_Mpm}
{\mathcal M}_{\alpha}^+ := \ker (I - S_{\alpha}), 
\quad {\mathcal M}_{\alpha}^- := \ker (I + S_{\alpha}) 
\end{equation}
where $I$ denotes the identity on $L_{r, 0}^2$.
Note that obviously, we have $L_{r, 0}^2 \subset L^2_{r_+}({\mathbb R})$
and hence, $L_{r, 0}^2 \subset L^2_{\omega_{\alpha}}({\mathbb R})$,
 $L_{r, 0}^2 \subset \dom \st_{\alpha}$
by Lemma \ref{lem_int_exist} and Proposition \ref{prop_Hilbert_space}, respectively.
Moreover, (\ref{pos_inner_prod}) and (\ref{t_alpha_pos}) allow the representation
\begin{equation}\label{reprQ}
\st_{\alpha} (f,g) = (Q_{\alpha}f, g)_r \qquad (f, g \in L_{r, 0}^2).
\end{equation}
\begin{lem}\label{lem_Mpm}
\begin{enumerate}
\item[(i)] The operator $Q_{\alpha}$ is symmetric with respect to $(\cdot,\cdot)_r$, i.e.
\[
(Q_{\alpha}f, g)_r = (f, Q_{\alpha}g)_r \quad 
(f,g \in L_{r, 0}^2)
\]
\item[(ii)] For $f \in L_{r, 0}^2$ we have $S_{\alpha}^2f = f$.
\item[(iii)] The space $L_{r, 0}^2$ allows the decomposition
\begin{equation}\label{decomp_domA+}
L_{r, 0}^2 = {\mathcal M}_{\alpha}^+ \oplus {\mathcal M}_{\alpha}^-.
\end{equation}
\item[(iv)] The spaces ${\mathcal M}_{\alpha}^+$ and ${\mathcal M}_{\alpha}^-$
are orthogonal with respect to $\st_{\alpha} (\cdot,\cdot)$, i.e. the decomposition (\ref{decomp_domA+})
is a direct and orthogonal sum in this sense.
\item[(v)] For $f_+ \in {\mathcal M}_{\alpha}^+, \, f_-  \in {\mathcal M}_{\alpha}^-$ we have
\[
\st_{\alpha} (f_+,f_+) = [f_+,f_+]_r, \quad \st_{\alpha} (f_-,f_-) = - [f_-,f_-]_r.
\]
\item[(vi)] The spaces ${\mathcal M}_{\alpha}^+$ and ${\mathcal M}_{\alpha}^-$
are also orthogonal with respect to $[\cdot,\cdot]_r$.
\item[(vii)] For $f \in \dom \st_{\alpha}, 
\, k \in {\mathbb N}$ 
we have $\chi_{[-k,k]} f \in L_{r, 0}^2$ and
\[
\chi_{[-k,k]} f \longrightarrow f 
\quad (k \longrightarrow \infty)
\quad \mbox{ w.r.t. } \st_{\alpha} (\cdot,\cdot).
\]
\item[(viii)] For $g \in L^2_{r_+}({\mathbb R}), 
\, k \in {\mathbb N}$ 
we have $\chi_{[-k,k]} g \in L_{r, 0}^2$ and
\[
\chi_{[-k,k]} g \longrightarrow g 
\quad (k \longrightarrow \infty)
\quad \mbox{ w.r.t. } (\cdot,\cdot)_{r_+}.
\]
\end{enumerate}
\end{lem}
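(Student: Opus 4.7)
The plan is to treat the eight items roughly in the order they appear, exploiting two simple algebraic facts that together handle (i)--(vi) with minimal computation, and reserving dominated-convergence arguments for the density claims (vii), (viii). For (i) I would invoke the representation (\ref{reprQ}) together with the Hermitian symmetry of $\st_\alpha(\cdot,\cdot)$ as the inner product of the Hilbert space from Proposition \ref{prop_Hilbert_space}(i). The involution identity in (ii) is a direct expansion: the cross-terms in $f(-x)$ cancel, while the coefficients of $f(x)$ combine to $(|x|^\alpha+1)-|x|^\alpha=1$. Since $\mathcal{Q}_\alpha$ preserves compact support and a short estimate shows $S_\alpha$ stays in $L^2_r$ on such supports, $S_\alpha$ maps $L_{r,0}^2$ into itself, and (iii) follows from the standard splitting $f=\tfrac12(f+S_\alpha f)+\tfrac12(f-S_\alpha f)$ of an involution.

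The crucial observation for (iv)--(vi) is the pointwise identity $Q_\alpha f = \sgn(\cdot)\,S_\alpha f$ almost everywhere on the support of $r$ (where $\sgn(\cdot)^2 = 1$). Consequently, for $f_\pm\in\mathcal{M}_\alpha^\pm$ we obtain $Q_\alpha f_\pm = \pm\sgn(\cdot)\,f_\pm$, and inserting this into (\ref{reprQ}) gives, for any $g\in L_{r,0}^2$,
\[
\st_\alpha(f_\pm,g) \;=\; \pm\int_{-\infty}^{\infty} \sgn(x)\,f_\pm(x)\,\overline{g(x)}\,|r(x)|\,dx \;=\; \pm\,[f_\pm,g]_r.
\]
Specializing $g=f_\pm$ yields (v) at once; taking $g=f_\mp$ in both relations and comparing via the Hermitian symmetry of $\st_\alpha$ forces $[f_-,f_+]_r = -[f_-,f_+]_r$, hence $[f_+,f_-]_r = 0$, which in turn implies $\st_\alpha(f_+,f_-) = 0$, establishing (iv) and (vi) simultaneously.

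For (vii) I first verify $\chi_{[-k,k]}f\in L_{r,0}^2$: on $[-k,k]$ one has $|r(x)| = |x|\,r_-(x) \le k\,r_-(x)$, so $\int_{-k}^{k}|f|^2|r|\,dx \le k\,(f,f)_{r_-} < \infty$, and compact support is automatic. Convergence in $\st_\alpha(\cdot,\cdot)$ then reduces via (\ref{t_alpha_pos}) to the tail bounds $\int_{|x|>k}|f_o|^2\omega_\alpha\,dx\to 0$ and $\int_{|x|>k}|f|^2\eta_\alpha\,dx\to 0$, both by dominated convergence from $f_o\in L^2_{\omega_\alpha}$ and $f\in L^2_{\eta_\alpha}$ built into (\ref{dom_t_alpha}); here one uses that $\chi_{[-k,k]}$ is even, so $(\chi_{[-k,k]}f)_o = \chi_{[-k,k]}f_o$. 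Part (viii) is analogous but simpler, using $|r|\le r_+/\varepsilon$ outside $[-\varepsilon,\varepsilon]$ and a tail bound in $L^2_{r_+}$. I expect the main obstacle to be purely notational bookkeeping, namely keeping careful track of the two closely related operators $\mathcal{Q}_\alpha$ and $S_\alpha = \sgn(\cdot)\mathcal{Q}_\alpha$ and recognizing that under the measure $|r|\,dx$ the identity $Q_\alpha = \sgn(\cdot)\,S_\alpha$ is effective; once this is in place, (iv)--(vi) collapse to a single line.
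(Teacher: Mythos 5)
Your proposal is correct and follows essentially the same route as the paper: the involution identity $S_\alpha^2=f$, the splitting $f=\tfrac12(f+S_\alpha f)+\tfrac12(f-S_\alpha f)$, the computation of $\st_\alpha(f_\pm,g)=\pm[f_\pm,g]_r$ via $Q_\alpha=J S_\alpha$ on the support of $r$ (compared through the symmetry of the form to force orthogonality), and tail estimates for the density claims. The only cosmetic difference is that for (i) you use (\ref{reprQ}) directly, a shortcut the paper itself notes as an alternative to its explicit change-of-variables calculation.
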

\begin{proof}
(i) From Lemma \ref{lem_int_exist} (iii) (or
by an immediate calculation)
we obtain
\[
\int_{-\infty}^\infty \sqrt{|x|^{\alpha}} \, f(-x)\overline{g}(x) |r(x)|  \, dx = \int_{-\infty}^\infty \sqrt{|t|^{\alpha}} \, f(t)\overline{g}(-t) |r(t)|  \, dx 
\]
for $f,g \in L_{r, 0}^2$.
This implies (i) by the definition of ${\mathcal Q}_{\alpha}f$ in (\ref{Qalpha}).
(Of course, (i) can also be deduced from (\ref{reprQ}).)

(ii) Using $\sgn (-x) = -\sgn (x)$ we can calculate for $f \in L_{r, 0}^2$
\begin{eqnarray}
(S_{\alpha}^2f)(x) & = & \sgn (x) (\sqrt{|x|^{\alpha} + 1 } (S_{\alpha}f)(x) - \sqrt{|x|^{\alpha}} \, (S_{\alpha}f)(-x))   \nonumber \\
& = & \sqrt{|x|^{\alpha} + 1 } (\sqrt{|x|^{\alpha} + 1 } f(x) - \sqrt{|x|^{\alpha}} \, f(-x))       
\nonumber \\
& & 
+ \sqrt{|x|^{\alpha}} \, (\sqrt{|x|^{\alpha} + 1 } f(-x) - \sqrt{|x|^{\alpha}} \, f(x))   = f(x).    \nonumber
\end{eqnarray}

(iii) Each $f \in L_{r, 0}^2$ can be written as $f = f_+ + f_-$ with
\[
f_+ := \frac{1}{2}(f + S_{\alpha}f), \quad f_- := \frac{1}{2}(f - S_{\alpha}f) \in L_{r, 0}^2
\]
and we have
\[
(I - S_{\alpha})f_+ = \frac{1}{2}(f - S_{\alpha}^2f) = 0, 
\qquad (I + S_{\alpha})f_- = \frac{1}{2}(f - S_{\alpha}^2f) = 0.
\]
Therefore, $f_{\pm}$ belongs to ${\mathcal M}_{\alpha}^{\pm}$
and we have proved $L_{r, 0}^2 = {\mathcal M}_{\alpha}^+ + {\mathcal M}_{\alpha}^-$.

(iv) For $f_+ \in {\mathcal M}_{\alpha}^+, \, f_-  \in {\mathcal M}_{\alpha}^-$ 
we know that $S_{\alpha}f_+ = f_+$ and $S_{\alpha}f_- = -f_-$.
Consequently, we can first calculate by (\ref{reprQ})
\[
\st_{\alpha} (f_+,f_-) = (Q_{\alpha}f_+, f_-)_r =  (J S_{\alpha}f_+, f_-)_r =  (J f_+, f_-)_r = [f_+, f_-]_r 
\]
using the fundamental symmetry $J$ from (\ref{fundSymm}). Similarly, we obtain by (i)
\[
\st_{\alpha} (f_+,f_-) = (f_+, Q_{\alpha}f_-)_r =  (f_+, J S_{\alpha}f_-)_r =  - (f_+, J f_-)_r = - [f_+, f_-]_r.
\]
Therefore, we have $\st_{\alpha} (f_+,f_-) = 0$ which implies (iv).

(v) follows by a similar calculation as in (iv).

(vi) also follows from the calculation in (iv).

(vii) Put
$f_k := \chi_{[-k,k]} f$. 
Then, we have
$f_k \in L_{r, 0}^2$ and we can estimate
\begin{eqnarray}
& & \st_{\alpha} (f - f_k,f - f_k)  =  2 (f_o - {f_k}_o,f_o - {f_k}_o)_{\omega_{\alpha}} + (f - f_k,f - f_k)_{\eta_{\alpha}}   \nonumber \\
& = & \frac{1}{2} \int_{-\infty}^{-k} |f(x) - f(-x)|^2 \, \omega_{\alpha}(x)  \, dx  + \frac{1}{2} \int_{k}^{\infty} |f(x) - f(-x)|^2 \, \omega_{\alpha}(x)  \, dx   \nonumber  \\
&  & + \int_{-\infty}^{-k} |f(x)|^2 \, \eta_{\alpha}(x)  \, dx  +  \int_{k}^{\infty} |f(x)|^2 \, \eta_{\alpha}(x)  \, dx
\; \longrightarrow 0 \quad (k \longrightarrow \infty)   \nonumber 
\end{eqnarray}
where $f_o$ and ${f_k}_o$ denote the odd parts of $f$ and ${f_k}$, respectively.

(viii) Put
$g_k := \chi_{[-k,k]} g$. 
Then, we have
$g_k \in L_{r, 0}^2$ and we can estimate
\[
(g - g_k,g - g_k)_{r_+} = \int_{-\infty}^{-k} |g|^2 \, |r_+|  \, dx  + \int_{k}^{\infty} |g|^2 \, |r_+| \, dx   
\; \longrightarrow 0 \quad (k \longrightarrow \infty).   \nonumber 
\]
\end{proof}
An orthogonal decomposition is preserved when we go over to the closure
and by Lemma \ref{lem_Mpm}(vii) $L_{r, 0}^2$ is dense in the Hilbert space $(\dom \st_{\alpha}, \, \st_{\alpha} (\cdot,\cdot))$.
Therefore, (\ref{decomp_domA+}) implies
\begin{equation}\label{decomp_dom_t_alpha}
\dom \st_{\alpha} = \overline{L_{r, 0}^2} = \overline{{\mathcal M}_{\alpha}^+} \oplus \overline{{\mathcal M}_{\alpha}^-}
\end{equation}
with a direct and othogonal sum in $(\dom \st_{\alpha}, \, \st_{\alpha} (\cdot,\cdot))$
where $\overline{L_{r, 0}^2}, \overline{{\mathcal M}_{\alpha}^+}, \overline{{\mathcal M}_{\alpha}^-}$
denote the closures in this space.
If $P_{\alpha}^{\pm}$ denotes the orthogonal projection onto $\overline{{\mathcal M}_{\alpha}^{\pm}}$
in this space then, with
\[
J_{\alpha} := P_{\alpha}^+ - P_{\alpha}^-
\]
the sesquilinear form
\begin{equation}\label{KreinSpaceInnerProd}
\st_{\alpha}[f,g] := \st_{\alpha}(J_{\alpha}f,g) \qquad (f,g \in \dom \st_{\alpha})
\end{equation}
defines a Kre\u{\i}n space inner product on $\dom \st_{\alpha}$.
Furthermore, (\ref{decomp_dom_t_alpha}) is a corresponding fundamental decomposition,
$J_{\alpha}$ is a corresponding fundamental symmetry
and $\st_{\alpha} (\cdot,\cdot)$ is the corresponding Hilbert space inner product.
On $L_{r, 0}^2$ we can make this structure more explicit.
\begin{lem}\label{lem_t_alpha_on_L2r0}
For $f, g \in L_{r, 0}^2$ we have
\begin{eqnarray}
P_{\alpha}^+f = \frac{1}{2} (f + S_{\alpha}f),  & &   P_{\alpha}^-f = \frac{1}{2} (f - S_{\alpha}f), \quad J_{\alpha}f = S_{\alpha}f, \label{P_alpha_pm} \\
& & \st_{\alpha}[f,g] = [f,g]_r.   \label{t_alpha_on_Lr0}
\end{eqnarray}
\end{lem}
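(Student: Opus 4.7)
The strategy is to identify the algebraic splitting $f = \tfrac{1}{2}(f+S_\alpha f) + \tfrac{1}{2}(f-S_\alpha f)$ of $f \in L_{r,0}^2$, already used in the proof of Lemma~\ref{lem_Mpm}(iii), with the Hilbert space orthogonal decomposition associated with the projections $P_\alpha^\pm$. Once this identification is made, (\ref{P_alpha_pm}) is immediate, $J_\alpha f = S_\alpha f$ follows from $J_\alpha = P_\alpha^+ - P_\alpha^-$, and (\ref{t_alpha_on_Lr0}) is a short computation combining the two orthogonality statements of Lemma~\ref{lem_Mpm}(iv) and (vi) with the sign-reversing identity of Lemma~\ref{lem_Mpm}(v).

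For the identification: given $f \in L_{r,0}^2$, set $f_\pm := \tfrac{1}{2}(f \pm S_\alpha f)$, so that $f_\pm \in \mathcal{M}_\alpha^\pm \subset \overline{\mathcal{M}_\alpha^\pm}$ (as shown in the proof of Lemma~\ref{lem_Mpm}(iii)) and $f = f_+ + f_-$. By Lemma~\ref{lem_Mpm}(iv) we have $\st_\alpha(f_+, f_-) = 0$, and the fundamental decomposition (\ref{decomp_dom_t_alpha}) says precisely that $\overline{\mathcal{M}_\alpha^-}$ is the $\st_\alpha(\cdot,\cdot)$-orthogonal complement of $\overline{\mathcal{M}_\alpha^+}$ in $\dom \st_\alpha$. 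Therefore $f_\pm$ must be the $\st_\alpha(\cdot,\cdot)$-orthogonal projections $P_\alpha^\pm f$, which proves (\ref{P_alpha_pm}), and then $J_\alpha f = P_\alpha^+ f - P_\alpha^- f = S_\alpha f$.

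For (\ref{t_alpha_on_Lr0}), write $g = g_+ + g_-$ analogously, so by definition (\ref{KreinSpaceInnerProd}) and bilinearity,
\[
\st_\alpha[f,g] = \st_\alpha(J_\alpha f, g) = \st_\alpha(f_+ - f_-, g_+ + g_-) = \st_\alpha(f_+, g_+) - \st_\alpha(f_-, g_-),
\]
where the cross terms vanish by Lemma~\ref{lem_Mpm}(iv). Polarizing the identities of Lemma~\ref{lem_Mpm}(v) gives $\st_\alpha(f_+, g_+) = [f_+, g_+]_r$ and $\st_\alpha(f_-, g_-) = -[f_-, g_-]_r$, so the right-hand side becomes $[f_+, g_+]_r + [f_-, g_-]_r$. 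Finally, the cross terms $[f_+, g_-]_r$ and $[f_-, g_+]_r$ vanish by Lemma~\ref{lem_Mpm}(vi), hence $[f_+, g_+]_r + [f_-, g_-]_r = [f, g]_r$, as desired.

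No real obstacle is expected here; the one point that needs attention is that the polarization step for Lemma~\ref{lem_Mpm}(v) is applied to elements of the \emph{subspaces} $\mathcal{M}_\alpha^\pm$ (so that both $f_\pm$ and $g_\pm$ lie in the same subspace), which is exactly the setting in which Lemma~\ref{lem_Mpm}(v) is stated. Everything else is routine bookkeeping once the decomposition $f = f_+ + f_-$ is recognized as the Hilbert space orthogonal one.
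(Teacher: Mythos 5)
Your argument is correct, and the first half (identifying $P_{\alpha}^{\pm}f = \tfrac{1}{2}(f \pm S_{\alpha}f)$ from the algebraic splitting into ${\mathcal M}_{\alpha}^{\pm}$ and the directness of the decomposition (\ref{decomp_domA+})/(\ref{decomp_dom_t_alpha})) is essentially identical to the paper's. For the identity (\ref{t_alpha_on_Lr0}) you take a genuinely different, if closely related, route: the paper computes in one line
$\st_{\alpha}[f,g] = \st_{\alpha}(S_{\alpha}f,g) = (Q_{\alpha}S_{\alpha}f,g)_r = (JS_{\alpha}^2 f,g)_r = [f,g]_r$,
going back to the representation (\ref{reprQ}), the operator identity $Q_{\alpha} = JS_{\alpha}$ on $L_{r,0}^2$, and $S_{\alpha}^2 = I$ from Lemma \ref{lem_Mpm}(ii); you instead decompose both $f$ and $g$ along ${\mathcal M}_{\alpha}^{+} \oplus {\mathcal M}_{\alpha}^{-}$ and assemble the result from the stated orthogonality relations Lemma \ref{lem_Mpm}(iv), (vi) together with the polarized form of Lemma \ref{lem_Mpm}(v). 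Both are valid; the paper's computation is shorter and self-contained, while yours has the merit of using only the packaged conclusions of Lemma \ref{lem_Mpm} rather than reopening the operator identities, at the cost of needing the (correctly justified) polarization of part (v), which is stated in the paper only for the quadratic forms.
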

\begin{proof}
By Lemma \ref{lem_Mpm}(ii) we observe that
\[
(I - S_{\alpha})(f + S_{\alpha}f) = f - S_{\alpha}^2f = 0, \quad (I + S_{\alpha})(f - S_{\alpha}f) = f - S_{\alpha}^2f = 0
\]
and hence $f \pm S_{\alpha}f \in {\mathcal M}_{\alpha}^{\pm}$. Therefore, the decomposition
\[
f = \frac{1}{2} (f + S_{\alpha}f) + \frac{1}{2} (f - S_{\alpha}f)
\]
corresponds to (\ref{decomp_domA+}). This implies the representation (\ref{P_alpha_pm}) of $P_{\alpha}^{\pm}f$ and hence,
\[
J_{\alpha}f = \frac{1}{2} (f + S_{\alpha}f) - \frac{1}{2} (f - S_{\alpha}f) = S_{\alpha}f.
\]
Finally, by Lemma \ref{lem_int_exist}(v) and Lemma \ref{lem_Mpm}(ii) we obtain
\[
\st_{\alpha}[f,g] 
= \st_{\alpha}(S_{\alpha}f,g) 
= (Q_{\alpha}S_{\alpha}f,g)_r =  (JS_{\alpha}S_{\alpha}f,g)_r = [f,g]_r
\]
using the fundamental symmetry $J$ in $(L^2_r({\mathbb R}), \, [\cdot,\cdot]_r)$ 
from (\ref{fundSymm}).
\end{proof}
The indefinite inner product $\st_{\alpha} [\cdot,\cdot]$ can be characterized more explicitly:
\begin{lem}\label{lem_improper_integral}
For 
$f, g \in \dom \st_{\alpha}$ the following limit exists and we have
\begin{equation}\label{improper_integral}
\lim_{k \rightarrow \infty} \int_{-k}^{k} f \overline{g} \, r  \, dx
= \st_{\alpha}[f,g].
\end{equation}
\end{lem}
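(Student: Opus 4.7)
The plan is to exploit the decomposition-and-truncation approach that has been set up in Lemma \ref{lem_Mpm}(vii) and Lemma \ref{lem_t_alpha_on_L2r0}: the indefinite inner product $\st_\alpha[\cdot,\cdot]$ is known to coincide with the ordinary $[\cdot,\cdot]_r$-integral on the dense subspace $L_{r,0}^2 \subset \dom \st_\alpha$, so the identity (\ref{improper_integral}) should be obtained by continuity.

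First, given $f,g \in \dom \st_\alpha$ I would introduce the truncations $f_k := \chi_{[-k,k]}\,f$ and $g_k := \chi_{[-k,k]}\,g$. I would check briefly that $f_k,g_k \in L_{r,0}^2$: since $|r(x)| = |x|\,r_-(x) \leq k\,r_-(x)$ on $[-k,k]$ and $f \in L^2_{r_-}(\mathbb R)$, the integral $\int |f_k|^2 |r|\,dx$ is finite, so $f_k \in L^2_{r,0}$, and analogously for $g_k$. By Lemma \ref{lem_Mpm}(vii), $f_k \to f$ and $g_k \to g$ with respect to the Hilbert space inner product $\st_\alpha(\cdot,\cdot)$.

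Second, I would observe that the form $\st_\alpha[\cdot,\cdot]$ defined in (\ref{KreinSpaceInnerProd}) as $\st_\alpha[f,g] = \st_\alpha(J_\alpha f,g)$ is jointly continuous on $(\dom \st_\alpha,\st_\alpha(\cdot,\cdot))$, since the fundamental symmetry $J_\alpha = P_\alpha^+ - P_\alpha^-$ is a bounded operator on this Hilbert space. Consequently
\[
\st_\alpha[f_k,g_k] \longrightarrow \st_\alpha[f,g] \qquad (k \to \infty).
\]
On the other hand, since $f_k,g_k \in L_{r,0}^2$, Lemma \ref{lem_t_alpha_on_L2r0} yields
\[
\st_\alpha[f_k,g_k] = [f_k,g_k]_r = \int_{-\infty}^{\infty} f_k\,\overline{g_k}\,r\,dx = \int_{-k}^{k} f\,\overline{g}\,r\,dx,
\]
the last integral being absolutely convergent on $[-k,k]$ by Cauchy--Schwarz applied to the weight decomposition $|r| = |x|\,r_-$. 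Combining these two displays proves both the existence of the limit in (\ref{improper_integral}) and its identification with $\st_\alpha[f,g]$.

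The only step that requires any real care is the continuity of $\st_\alpha[\cdot,\cdot]$ claimed in the second step; but this is automatic from the construction (\ref{KreinSpaceInnerProd}), because $P_\alpha^\pm$ are orthogonal projections in the Hilbert space $(\dom\st_\alpha,\st_\alpha(\cdot,\cdot))$, so $\|J_\alpha\| \leq 1$ and
\[
\bigl|\st_\alpha[f,g]\bigr| \leq \st_\alpha(J_\alpha f,J_\alpha f)^{1/2}\,\st_\alpha(g,g)^{1/2} \leq \st_\alpha(f,f)^{1/2}\,\st_\alpha(g,g)^{1/2}.
\]
I expect no serious obstacle beyond verifying this routine bound together with the absolute integrability needed to write $\int_{-k}^{k} f\,\overline g\,r\,dx$ as $[f_k,g_k]_r$.
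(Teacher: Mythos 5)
Your proposal is correct and follows essentially the same route as the paper's own proof: truncate to $f_k,g_k\in L_{r,0}^2$, use Lemma \ref{lem_Mpm}(vii) for convergence in $\st_{\alpha}(\cdot,\cdot)$, the boundedness of $J_{\alpha}$ for continuity of $\st_{\alpha}[\cdot,\cdot]$, and the identity $\st_{\alpha}[f_k,g_k]=[f_k,g_k]_r$ from Lemma \ref{lem_t_alpha_on_L2r0}. The extra details you supply (membership of the truncations in $L_{r,0}^2$ and the explicit norm bound on $J_{\alpha}$) are routine verifications the paper leaves implicit.
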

\begin{proof}
By Lemma \ref{lem_Mpm}(vii) we know that
$f_k := \chi_{[-k,k]} f, \, g_k := \chi_{[-k,k]} g \in L_{r, 0}^2$ satisfy 
$f_k \longrightarrow f$ and
$g_k \longrightarrow g \; (k \longrightarrow \infty)$ with respect to $\st_{\alpha} (\cdot,\cdot)$.
Since $\st_{\alpha} [\cdot,\cdot]$ is continuous  with respect to $\st_{\alpha} (\cdot,\cdot)$ this implies
\[
\st_{\alpha}[f,g] = \lim_{k \rightarrow \infty} \st_{\alpha} [f_k,g_k]
= \lim_{k \rightarrow \infty} [f_k,g_k]_r
= \lim_{k \rightarrow \infty} \int_{-k}^{k} f \overline{g} \, r  \, dx
\]
by (\ref{t_alpha_on_Lr0}).
\end{proof}
Finalle, we collect some of the above results:
\begin{prop}\label{prop_t_alpha_KreinSpace}
For $\alpha \in [0,2]$ the space
$\dom \st_{\alpha}$ from (\ref{dom_t_alpha}) is a Kre\u{\i}n space with the indefinite inner product
$\st_{\alpha}[\cdot,\cdot]$ satisfying (\ref{improper_integral}).
A fundamental decomposition is given by (\ref{decomp_dom_t_alpha})
and $\st_{\alpha} (\cdot,\cdot)$ from (\ref{t_alpha_pos}) is the corresponding Hilbert space inner product.
\end{prop}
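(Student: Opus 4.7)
The proposition is essentially an assembly of the preceding lemmas, so my proof plan is to stitch them together explicitly rather than to introduce any new machinery. First I would record that $(\dom \st_{\alpha},\, \st_{\alpha}(\cdot,\cdot))$ is a Hilbert space by Proposition \ref{prop_Hilbert_space}(i). Then I would observe that (\ref{decomp_dom_t_alpha}) has already been established as a direct orthogonal sum decomposition of this Hilbert space: the algebraic direct sum on the dense subspace $L_{r,0}^2$ comes from Lemma \ref{lem_Mpm}(iii), the orthogonality of $\mathcal{M}_{\alpha}^{+}$ and $\mathcal{M}_{\alpha}^{-}$ with respect to $\st_{\alpha}(\cdot,\cdot)$ is Lemma \ref{lem_Mpm}(iv), the density of $L_{r,0}^2$ in $\dom \st_{\alpha}$ is Lemma \ref{lem_Mpm}(vii), and the property of being a direct orthogonal sum passes to the closures.

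Next, using the orthogonal projections $P_{\alpha}^{\pm}$ onto $\overline{\mathcal{M}_{\alpha}^{\pm}}$ associated with this decomposition, the operator $J_{\alpha} := P_{\alpha}^{+} - P_{\alpha}^{-}$ is by construction a bounded self-adjoint involution on $(\dom \st_{\alpha},\, \st_{\alpha}(\cdot,\cdot))$. Hence (\ref{KreinSpaceInnerProd}) defines a bounded sesquilinear form $\st_{\alpha}[\cdot,\cdot]$ on $\dom \st_{\alpha}$ whose restriction to $\overline{\mathcal{M}_{\alpha}^{+}}$ agrees with $\st_{\alpha}(\cdot,\cdot)$ and whose restriction to $\overline{\mathcal{M}_{\alpha}^{-}}$ agrees with $-\st_{\alpha}(\cdot,\cdot)$; in particular, the two summands become a Hilbert space and an anti Hilbert space, respectively. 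By the very definition recalled in Section \ref{subsec_preliminaries_definitizable}, this makes $(\dom \st_{\alpha},\, \st_{\alpha}[\cdot,\cdot])$ a Kre\u{\i}n space with fundamental decomposition (\ref{decomp_dom_t_alpha}), fundamental symmetry $J_{\alpha}$, and Hilbert space inner product $\st_{\alpha}(\cdot,\cdot)$.

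Finally, the integral representation (\ref{improper_integral}) is exactly the content of Lemma \ref{lem_improper_integral}, so nothing further needs to be verified there. There is no real obstacle: the nontrivial content (existence of the improper integral, coincidence of $\st_{\alpha}[\cdot,\cdot]$ with $[\cdot,\cdot]_r$ on the dense subspace $L_{r,0}^2$, orthogonality of $\mathcal{M}_{\alpha}^{\pm}$ with respect to both $\st_{\alpha}(\cdot,\cdot)$ and $[\cdot,\cdot]_r$) has all been built up in the previous lemmas. The only mildly delicate point is making sure the orthogonal decomposition of the dense subspace $L_{r,0}^2$ really induces a fundamental decomposition of the closure in the topology of $\st_{\alpha}(\cdot,\cdot)$, which I would justify in one line by invoking the fact that closures of mutually orthogonal subspaces in a Hilbert space remain mutually orthogonal and their sum stays closed when there are only finitely many of them.
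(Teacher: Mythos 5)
Your proposal is correct and follows exactly the route the paper takes: the paper gives no separate proof of Proposition \ref{prop_t_alpha_KreinSpace} but obtains it by assembling Proposition \ref{prop_Hilbert_space}(i), Lemma \ref{lem_Mpm}(iii),(iv),(vii) (orthogonal decomposition passing to closures, yielding (\ref{decomp_dom_t_alpha}) and the fundamental symmetry $J_{\alpha}$), and Lemma \ref{lem_improper_integral} for (\ref{improper_integral}), which is precisely your stitching. The one point you flag as ``mildly delicate'' --- that the orthogonal decomposition of the dense subspace induces a fundamental decomposition of the closure --- is handled by the paper with the same one-line remark, so nothing is missing.
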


\subsection{Conclusions for forms associated with the multiplication operator}\label{subsec_conclusions_mult_op}

Now, we bring together the results from the Sections \ref{subsec_example_regular} and \ref{subsec_example_nonregular}.
We start with the ``Kre\u{\i}n space point of view''.
\begin{thm}\label{thm_conclusions_KreinSpaceCompletion}
Let $T$ be the self-adjoint multipication operator in the Hilbert space $(L^2_{r_-}({\mathbb R}), \, (\cdot,\cdot)_{r_-})$ according to (\ref{op_T})
and let ${\mathcal N}_T$ be given according to Theorem \ref{thm_1-1_only_T}.
Furthermore, for $\alpha \in [0,2]$ consider the space
$\dom \st_{\alpha}$ from (\ref{dom_t_alpha}) and the inner products
$\st_{\alpha}[\cdot,\cdot]$, $\st_{\alpha}(\cdot,\cdot)$ and $\st [\cdot,\cdot]$ satisfying (\ref{improper_integral}), (\ref{t_alpha_pos}) and (\ref{form_t}), respectively.
Then the following statements hold true:
\begin{enumerate}
\item[(i)] The form $\st [\cdot,\cdot]$ 
coincides with the form $\st_{0} [\cdot,\cdot]$, i.e $\st_{\alpha} [\cdot,\cdot]$ 
with $\alpha = 0$.
\item[(ii)] All spaces $(\dom \st_{\alpha}, \, \st_{\alpha} [\cdot,\cdot])$ with $\alpha \in [0,2]$ are Kre\u{\i}n space completions of $(\dom T, \, (T\cdot,\cdot)_{r_-})$ which are continuously embedded in $(L^2_{r_-}({\mathbb R}), \, (\cdot,\cdot)_{r_-})$.
An associated Hilbert space inner product is given by $\st_{\alpha} (\cdot,\cdot)$.
\item[(iii)] For all $\alpha \in (0,2]$ the J-self-adjoint, J-non-negative and boundedly invertible range restriction 
$A_{\st_{\alpha}}$ of $T$ to $\dom \st_{\alpha}$ is given by
\[
\dom A_{\st_{\alpha}} = \{f \in L^2_{r_+}({\mathbb R}) \, | \, Tf \in \dom \st_{\alpha} \}, \quad
(A_{\st_{\alpha}}f)(x) = x f(x)
\]
and infinity is a singular critical point of
$A_{\st_{\alpha}}$ in $(\dom \st_{\alpha}, \, \st_{\alpha} [\cdot,\cdot])$.
\item[(iv)]  For $\alpha = 0$ infinity is not a singular critical point of $A_{\st_0} = A$ in $(L^2_{r}({\mathbb R}), \, [\cdot,\cdot]_{r})$
from Examle \ref{ex_model_spaces}.
\item[(v)] $(L^2_{r}({\mathbb R}), \, [\cdot,\cdot]_{r}, \, A) \in {\mathcal N}_T$ from Examle \ref{ex_model_spaces}
is the regularization of all elements $(\dom \st_{\alpha}, \, \st_{\alpha} [\cdot,\cdot], \, A_{\st_{\alpha}}) \in {\mathcal N}_T$ with $\alpha \in (0,2]$.
\end{enumerate}
\end{thm}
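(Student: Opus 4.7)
The plan is to verify the five assertions in order, leveraging Proposition \ref{prop_t_alpha_KreinSpace} together with Lemmas \ref{lem_extremal}--\ref{lem_Mpm} and the general results of Section \ref{sec_closures}. For (i), I would invoke Lemma \ref{lem_extremal}(iv), which yields $\dom \st_0 = L^2_r({\mathbb R}) = \dom \st$; on this domain the integral defining $\st[\cdot,\cdot]$ is absolutely convergent (direct Cauchy--Schwarz with respect to $|r|$), so the improper integral in (\ref{improper_integral}) coincides with $[f,g]_r = \st[f,g]$.

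For (ii), the Krein space structure comes for free from Proposition \ref{prop_t_alpha_KreinSpace}, and the continuous embedding into $(L^2_{r_-}({\mathbb R}), (\cdot,\cdot)_{r_-})$ is Proposition \ref{prop_Hilbert_space}(ii)(iii). To see that $\dom T = L^2_{r_+}({\mathbb R})$ is dense in $(\dom \st_\alpha, \st_\alpha(\cdot,\cdot))$, I would note that $L^2_{r,0} \subset L^2_{r_+}({\mathbb R}) \subset \dom \st_\alpha$ while Lemma \ref{lem_Mpm}(vii) gives that $L^2_{r,0}$ is already dense. Finally, to check that $\st_\alpha[\cdot,\cdot]$ restricted to $\dom T$ coincides with $(T\cdot,\cdot)_{r_-}$, I observe the identities $xr_-(x) = r(x)$ and $|r|^2 = r_+\, r_-$; the latter together with Cauchy--Schwarz gives
\[
\int_{-\infty}^{\infty} |f||g||r|\, dx \;\le\; \|f\|_{r_+}\, \|g\|_{r_-} \;<\; \infty
\qquad (f,g \in L^2_{r_+}({\mathbb R})),
\]
so (\ref{improper_integral}) reduces to the absolutely convergent integral $\int f\overline{g}\, r\, dx = (Tf,g)_{r_-}$.

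For (iii), once (ii) is established the equality $T_{\st_\alpha} = T$ follows from Theorem \ref{thm_1-1_only_T}, so the description of $A_{\st_\alpha}$ is just (\ref{rangerestriction}) applied to the multiplication operator. The crucial new input is non-regularity: for $\alpha \in (0,2]$, Lemma \ref{lem_set_diff}(vi) (applied with the pair $(0,\alpha)$) produces the function $g_0 \in \dom \st_\alpha \setminus \dom \st_0 = \dom \st_\alpha \setminus L^2_r({\mathbb R}) = \dom \st_\alpha \setminus \dom |T|^{\frac{1}{2}}$, which by Proposition \ref{form_subset} prevents $\dom \st_\alpha = \dom |T|^{\frac{1}{2}}$. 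Theorem \ref{second} then forces infinity to be a singular critical point of $A_{\st_\alpha}$. Part (iv) is immediate from (i) combined with the explicit analysis in Section \ref{subsec_example_regular}, which showed $\st$ is regular and infinity is not singular for $A = A_\st$. Part (v) then follows from Theorem \ref{thm_exceptual}(ii), which characterizes the regularization as the unique element of ${\mathcal N}_T$ without singular critical point at infinity.

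The only nontrivial obstacle I expect is the identification in (ii) of the improper integral defining $\st_\alpha[\cdot,\cdot]$ on $\dom T$ with the genuine sesquilinear form $(T\cdot,\cdot)_{r_-}$; the factorization $|r|^2 = r_+\,r_-$ is what makes the Cauchy--Schwarz estimate work, and once absolute integrability is secured, all remaining steps are either definitional or direct appeals to the results already proved above.
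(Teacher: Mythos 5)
Your proposal is correct and follows essentially the same route as the paper: (i) and (ii) rest on Lemma \ref{lem_extremal}(iv), Lemma \ref{lem_improper_integral} (equivalently, your absolute-convergence argument on $\dom T$ using $|r|^2 = r_+ r_-$) and Proposition \ref{prop_Hilbert_space}, while (iv) is exactly the computation of Section \ref{subsec_example_regular} and (v) is the appeal to Theorem \ref{thm_exceptual}(ii). The only variation is in (iii), where the paper deduces the singularity of the critical point $\infty$ from the uniqueness statement in Theorem \ref{thm_exceptual}(ii) applied to (iv), whereas you verify non-regularity directly via $g_0 \in \dom \st_{\alpha} \setminus \dom |T|^{\frac{1}{2}}$ and Theorem \ref{second}; your version has the mild advantage of making explicit that the completions for $\alpha \in (0,2]$ genuinely differ from the one for $\alpha = 0$, a point the paper leaves implicit in Lemma \ref{lem_set_diff}.
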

\begin{proof}
(i) follows from Lemma \ref{lem_extremal}(iv) and Lemma \ref{lem_improper_integral}.

(ii) By Proposition \ref{prop_Hilbert_space} the space $L^2_{r_+}({\mathbb R}) \, (= \dom T)$
is dense in the Kre\u{\i}n space $(\dom \st_{\alpha}, \, \st_{\alpha} [\cdot,\cdot])$
and by Lemma \ref{lem_improper_integral} on $\dom T$
the forms $\st_{\alpha} [\cdot,\cdot]$ and $[\cdot,\cdot]_r$ and hence, also $(T\cdot,\cdot)_{r_-}$ coincide.
The continuity of the embedding was also already shown in Proposition \ref{prop_Hilbert_space}.

(iv) was already observed in Section \ref{subsec_example_regular}.

(v) and (iii) are immediate consequences of (iv) by Theorem \ref{thm_exceptual}(ii).
\end{proof}
In terms of form closures the above results have the following form:
\begin{cor}\label{cor_ex_non-neg_closure}
Let $T$ be the self-adjoint multipication operator in the Hilbert space $(L^2_{r_-}({\mathbb R}), \, (\cdot,\cdot)_{r_-})$ according to (\ref{op_T}).
Furthermore, 
let the functions $f_{\tau}, \, g_{\tau}$ be given by (\ref{f_g_tau}) for $\tau \in [0,2]$.
Then the following statements hold true:
\begin{enumerate}
\item[(i)] All forms $\st_{\alpha} [\cdot,\cdot]$ 
with $\alpha \in [0,2]$
are closures of $(T\cdot,\cdot)_{r_-}$ in the Hilbert space $(L^2_{r_-}({\mathbb R}), \, (\cdot,\cdot)_{r_-})$ with gap point $0$.
\item[(ii)] The forms $\st_{\alpha} [\cdot,\cdot]$ with $\alpha \in (0,2]$ are not regular.
\item[(iii)] The form $\st_0 [\cdot,\cdot] \, (= \st [\cdot,\cdot])$ is regular 
and the regularization of all forms $\st_{\alpha} [\cdot,\cdot]$ with $\alpha \in (0,2]$.
\item[(iv)] For $\alpha \in (0,2]$ we have $f_{\alpha} \in \dom \st_0 \setminus \dom \st_{\alpha}$ and 
$g_{0} \in \dom \st_{\alpha} \setminus \dom \st_0$
(where $g_0 = g_{\tau}$ with $\tau = 0$).
\end{enumerate}
\end{cor}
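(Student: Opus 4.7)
The plan is to read the corollary as a direct translation of Theorem \ref{thm_conclusions_KreinSpaceCompletion} from the Kre\u{\i}n-space language of ${\mathcal N}_T$ into the form-closure language of ${\mathcal M}_T$ via the bijection $\Phi_T$ of Theorem \ref{thm_1-1_only_T}, plus an appeal to Lemma \ref{lem_set_diff} for part (iv); essentially no new analytic computation should be required.

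For (i) I would invoke Theorem \ref{thm_conclusions_KreinSpaceCompletion}(ii): each $(\dom \st_{\alpha}, \, \st_{\alpha}[\cdot,\cdot])$ is a Kre\u{\i}n space completion of $(\dom T, \, (T\cdot,\cdot)_{r_-})$ continuously embedded in $(L^2_{r_-}({\mathbb R}), \, (\cdot,\cdot)_{r_-})$. By the definition of a closed form with gap point $\lambda$ from Section \ref{subsec_preliminaries_forms} (with $\lambda = 0$ here), and by the rule (\ref{rule_Phi_inverse}) characterising $\Phi_T^{-1}$, this is precisely the assertion that $\st_{\alpha}[\cdot,\cdot]$ is a closure of $(T\cdot,\cdot)_{r_-}$ with gap point $0$; i.e. $\st_{\alpha}[\cdot,\cdot] \in {\mathcal M}_T$ via Theorem \ref{thm_1-1_only_T}.

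Parts (ii) and (iii) I would obtain by combining the Second Representation Theorem \ref{second} with the critical-point information in Theorem \ref{thm_conclusions_KreinSpaceCompletion}. Regularity of $\st_{\alpha}[\cdot,\cdot]$ is equivalent to infinity not being a singular critical point of $A_{\st_{\alpha}}$; by Theorem \ref{thm_conclusions_KreinSpaceCompletion}(iii) this fails for every $\alpha \in (0,2]$, proving (ii), while by Theorem \ref{thm_conclusions_KreinSpaceCompletion}(iv) it holds for $\alpha = 0$. Regularity of $\st_0$ is also visible directly from Section \ref{subsec_example_regular}, where $\dom \st_0 = L^2_r({\mathbb R}) = \dom |T|^{\frac{1}{2}}$. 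That $\st_0$ is the \emph{regularization} of every other $\st_{\alpha}$ then follows either by transporting Theorem \ref{thm_conclusions_KreinSpaceCompletion}(v) back through $\Phi_T^{-1}$ and using the definition given after Theorem \ref{thm_exceptual}, or equivalently by noting that Theorem \ref{thm_exceptual}(i) forces the regular element of ${\mathcal M}_T$ to be unique.

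For (iv) I would simply specialise Lemma \ref{lem_set_diff} to the index pair $(0, \alpha)$ with $\alpha \in (0,2]$: part (iii) of that lemma gives $f_{\alpha} \in \dom \st_0 \setminus \dom \st_{\alpha}$ and part (vi) gives $g_0 \in \dom \st_{\alpha} \setminus \dom \st_0$. No step looks like a serious obstacle; the only place where one must be a little careful is checking that ``closure with gap point $0$'' and ``Kre\u{\i}n space completion of $(\dom T,(T\cdot,\cdot)_{r_-})$ continuously embedded in $H$ with $\dom T$ dense'' really are interchangeable in the present setting, but this equivalence is exactly what is built into the sets ${\mathcal M}_T$, ${\mathcal N}_T$ and the bijection $\Phi_T$ of Theorem \ref{thm_1-1_only_T}.
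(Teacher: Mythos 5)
Your proposal is correct and follows essentially the same route as the paper, whose proof reads "(iv) follows from Lemma \ref{lem_set_diff}. The other statements follow immediately from Theorem \ref{thm_conclusions_KreinSpaceCompletion} by the definitions." You merely spell out the translation through $\Phi_T$, the Second Representation Theorem, and Theorem \ref{thm_exceptual} in more detail than the paper bothers to.
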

\begin{proof}
(iv) follows from Lemma \ref{lem_set_diff}.
The other statements follow immediately from Theorem \ref{thm_conclusions_KreinSpaceCompletion} by the definitions.
\end{proof}
In the present example we have identified infinitely many elements in the sets
${\mathcal M}_T$ and ${\mathcal N}_T$ from Theorem \ref{thm_1-1_only_T}:
\begin{cor}\label{cor_ex_MT_NT}
In the setting of Theorem \ref{thm_conclusions_KreinSpaceCompletion} we have
\begin{eqnarray}
\{ \st_{\alpha} [\cdot,\cdot] \; | \; \alpha \in [0,2] \} 
& \subset &  {\mathcal M}_T,  \nonumber \\
\{ (\dom \st_{\alpha}, \, \st_{\alpha} [\cdot,\cdot], \, A_{\st_{\alpha}} ) \; | \; \alpha \in [0,2] \}
&  \subset &  {\mathcal N}_T.      \nonumber
\end{eqnarray}
For $0 \leq \alpha < \beta \leq 2$ the set differences 
$\dom \st_{\alpha} \setminus \dom \st_{\beta}$ and $\dom \st_{\beta} \setminus \dom \st_{\alpha}$
are not empty (and Lemma \ref{lem_set_diff} presents elements in these set differences).
\end{cor}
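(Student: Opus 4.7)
The plan is to assemble the corollary from results already established in this section. For the first inclusion, I would invoke Corollary \ref{cor_ex_non-neg_closure}(i), which already states that every $\st_{\alpha}[\cdot,\cdot]$ (for $\alpha \in [0,2]$) is a closure of $(T\cdot,\cdot)_{r_-}$ in the Hilbert space $(L^2_{r_-}({\mathbb R}), (\cdot,\cdot)_{r_-})$ with gap point $0$. Since this is precisely the defining property of membership in ${\mathcal M}_T$ from Theorem \ref{thm_1-1_only_T}, the inclusion $\{\st_{\alpha}[\cdot,\cdot] \mid \alpha \in [0,2]\} \subset {\mathcal M}_T$ is immediate.

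For the second inclusion I would apply the bijection $\Phi_T$ from Theorem \ref{thm_1-1_only_T} to each $\st_{\alpha}[\cdot,\cdot] \in {\mathcal M}_T$. By the rule (\ref{rule_Phi}), $\Phi_T$ sends $\st_{\alpha}[\cdot,\cdot]$ to the triple $(\dom \st_{\alpha}, \st_{\alpha}[\cdot,\cdot], A_{\st_{\alpha}})$, which therefore lies in ${\mathcal N}_T$. A direct verification is equally possible: Theorem \ref{thm_conclusions_KreinSpaceCompletion}(ii) shows that $(\dom \st_{\alpha}, \st_{\alpha}[\cdot,\cdot])$ is a Kre\u{\i}n space completion of $(\dom T, (T\cdot,\cdot)_{r_-})$ continuously embedded in $(L^2_{r_-}({\mathbb R}), (\cdot,\cdot)_{r_-})$, while Theorem \ref{thm_conclusions_KreinSpaceCompletion}(iii)--(iv) identify $A_{\st_{\alpha}}$ as the range restriction of $T$ to $\dom \st_{\alpha}$; together these are the defining conditions of ${\mathcal N}_T$.

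For the non-emptiness of the set differences with $0 \leq \alpha < \beta \leq 2$, I would simply exhibit the explicit witnesses from Lemma \ref{lem_set_diff}: by Lemma \ref{lem_set_diff}(iii), $f_{\beta} \in \dom \st_{\alpha} \setminus \dom \st_{\beta}$, and by Lemma \ref{lem_set_diff}(vi), $g_{\alpha} \in \dom \st_{\beta} \setminus \dom \st_{\alpha}$, which is exactly the content of the parenthetical remark.

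There is no substantive obstacle: the corollary is a bookkeeping statement, and all the technical content has been established in Theorem \ref{thm_conclusions_KreinSpaceCompletion}, Corollary \ref{cor_ex_non-neg_closure}, and Lemma \ref{lem_set_diff}. The only care needed is to notice that the two inclusions are two sides of the same coin via $\Phi_T$, so that both amount to a single application of Theorem \ref{thm_1-1_only_T} once Corollary \ref{cor_ex_non-neg_closure}(i) is in hand.
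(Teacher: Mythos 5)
Your proposal is correct and matches the paper's treatment: the paper states this corollary without a separate proof, regarding it as an immediate consequence of Corollary \ref{cor_ex_non-neg_closure}(i), Theorem \ref{thm_conclusions_KreinSpaceCompletion}(ii)--(iii) (equivalently, one application of $\Phi_T$ from Theorem \ref{thm_1-1_only_T}), and Lemma \ref{lem_set_diff}(iii),(vi), which is exactly the bookkeeping you carry out. Nothing is missing.
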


Finally, we have a look at the eigenspectral function $E_{\alpha} :=  E_{\dom \st_{\alpha}}$ of $A_{\st_{\alpha}}$
in the Kre\u{\i}n space $(\dom \st_{\alpha}, \, \st_{\alpha} [\cdot,\cdot])$.
Using again Theorem \ref{thm_spectralFunction} as in Section \ref{subsec_example_regular} 
this function is given by the restriction of the spectral measure of $T$ in (\ref{spectralMeasure})
to $\dom \st_{\alpha}$, i.e.
\begin{equation}\label{spectraFunction_alpha}
E_{\alpha}(\Delta) f = \chi_{\Delta} f \qquad (f \in \dom \st_{\alpha})
\end{equation}
for each set $\Delta \in \Sigma$.
Since for $\alpha \in (0,2]$ infinity is a singular critical point
we know that $E_{\alpha}([\varepsilon,\lambda]) f$ or $E_{\alpha}([-\lambda,-\varepsilon]) f$ 
does not converge in $(\dom \st_{\alpha}, \, \st_{\alpha} [\cdot,\cdot])$ 
with $\lambda \longrightarrow \infty$ for some $f \in \dom \st_{\alpha}$.
Now, using the function $g_{\tau}$ for $\tau = 0$
\[
g_{0}(x) = \frac{1}{\sqrt{r(x) x}}
\quad (x \in {\mathbb R} \setminus [-\varepsilon, \varepsilon])
\]
from (\ref{f_g_tau}) (with $\varepsilon > 0$ from (\ref{function_r_sign})) we can make this result more explicit.
\begin{prop}\label{prop_ex_no-converge}
For $\alpha \in (0,2]$ denote the 
norm for 
operators in
the Hilbert space $(\dom \st_{\alpha}, \, \st_{\alpha} (\cdot,\cdot))$ by $|| \cdot ||_{\alpha}$.
Then, the following statements hold true:
\begin{enumerate}
\item[(i)] For each $f \in \dom \st_{\alpha}$ we have
\[
E_{\alpha}([-k,k]) f \longrightarrow f  \quad (k \longrightarrow \infty)
\]
with convergence in $(\dom \st_{\alpha}, \, \st_{\alpha} (\cdot,\cdot))$.
\item[(ii)] $E_{\alpha}((\varepsilon,k]) g_{0}$ does not converge in this space
for $k \longrightarrow \infty$.
More precisely, we have
\begin{equation}\label{g_0_unbounded}
g_{0} \in \dom \st_{\alpha}, \quad
\st_{\alpha} (E_{\alpha}((\varepsilon,k]) g_{0}, E_{\alpha}((\varepsilon,k]) g_{0})
\longrightarrow \infty \quad (k \longrightarrow \infty).
\end{equation}
\item[(iii)] For all $k \in {\mathbb N}, \, k > \varepsilon$ we have
\[
|| \, E_{\alpha}([-k,k]) \, ||_{\alpha} = 1, \qquad
|| \, E_{\alpha}((\varepsilon,k]) \, ||_{\alpha} \geq \frac{2 (\sqrt{k^{\alpha}} - \sqrt{\varepsilon^{\alpha}})}{\alpha (g_0,g_0)_{\eta_{\alpha}}}.
\]
\end{enumerate}
\end{prop}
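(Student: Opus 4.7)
The plan is to exploit the explicit description (\ref{spectraFunction_alpha}) of $E_\alpha(\Delta)$ as multiplication by the characteristic function $\chi_\Delta$, reducing every claim to a direct computation in the integral form (\ref{t_alpha_pos}) of $\st_\alpha(\cdot,\cdot)$. For (i), this immediately converts the claim into $\chi_{[-k,k]} f \to f$ with respect to $\st_\alpha(\cdot,\cdot)$, which is exactly Lemma \ref{lem_Mpm}(vii).

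For (ii), I would first verify that $g_0 \in \dom \st_\alpha$: the elementary identity $|g_0|^2 r_- = |x|^{-2}$ on $|x| > \varepsilon$ gives $g_0 \in L^2_{r_-}({\mathbb R})$; the evenness of $g_0$ places its odd part trivially in $L^2_{\omega_\alpha}({\mathbb R})$; and $g_0 \in L^2_{\eta_\alpha}({\mathbb R})$ follows from Lemma \ref{lem_set_diff}(v) (with the role of ``$\alpha$'' there played by $0$ and that of ``$\beta$'' by our $\alpha$). The core computation is then to decompose the asymmetric cut-off $\chi_{(\varepsilon,k]} g_0$ into its even and odd parts and substitute into (\ref{t_alpha_pos}); a short calculation yields
\[
\st_\alpha(\chi_{(\varepsilon,k]} g_0, \chi_{(\varepsilon,k]} g_0) = \int_\varepsilon^k |g_0|^2 \omega_\alpha \, dx + \int_\varepsilon^k |g_0|^2 \eta_\alpha \, dx.
\]
Since $|g_0(x)|^2 \omega_\alpha(x) = |x|^{\alpha/2 - 1}$ on $|x| > \varepsilon$, the first integral equals $\tfrac{2}{\alpha}(\sqrt{k^\alpha} - \sqrt{\varepsilon^\alpha})$, which diverges as $k \to \infty$. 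This proves (\ref{g_0_unbounded}); in particular $E_\alpha((\varepsilon,k]) g_0$ cannot be Cauchy and fails to converge in $(\dom \st_\alpha, \st_\alpha(\cdot,\cdot))$.

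For (iii), I would exploit that $\chi_{[-k,k]}$ is \emph{even}, so it commutes with the decomposition into even/odd parts: $(\chi_{[-k,k]} f)_{e, o} = \chi_{[-k,k]} f_{e, o}$. Plugging into (\ref{t_alpha_pos}) then gives $\st_\alpha(\chi_{[-k,k]} f, \chi_{[-k,k]} f) \leq \st_\alpha(f, f)$ and hence $\|E_\alpha([-k,k])\|_\alpha \leq 1$; the reverse inequality follows by testing against any nonzero $f$ supported in $(\varepsilon, k]$ (which is fixed by $E_\alpha([-k,k])$), for instance the restriction of $g_0$ to $(\varepsilon,k]$. For the bound on $\|E_\alpha((\varepsilon,k])\|_\alpha$, I would use $g_0$ itself as the test function: since $\|g_0\|_\alpha^2 = (g_0, g_0)_{\eta_\alpha}$ (the odd part vanishes) and $\|E_\alpha((\varepsilon,k]) g_0\|_\alpha^2 \geq \tfrac{2}{\alpha}(\sqrt{k^\alpha} - \sqrt{\varepsilon^\alpha})$ from the computation in (ii), the defining ratio for the operator norm yields the claimed inequality.

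The hardest part will be the computation in (ii): the asymmetry of the cut-off $(\varepsilon, k]$ destroys the commutativity with the even/odd decomposition, forcing both the $\omega_\alpha$- and $\eta_\alpha$-contributions in (\ref{t_alpha_pos}) to be unwound carefully from the formulas $(\chi_{(\varepsilon,k]} g_0)_o(x) = \tfrac{1}{2}(\chi_{(\varepsilon,k]}(x) - \chi_{[-k,-\varepsilon)}(x)) g_0(x)$ and the even analogue. It is precisely the $\omega_\alpha$-integral produced by this asymmetry that drives the divergence and certifies that infinity is a singular critical point of $A_{\st_\alpha}$, in line with Theorem \ref{thm_conclusions_KreinSpaceCompletion}(iii).
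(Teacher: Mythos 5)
Your handling of (i), of (ii), and of the first identity in (iii) is correct and essentially identical to the paper's proof: (i) reduces via (\ref{spectraFunction_alpha}) to Lemma \ref{lem_Mpm}(vii); your membership check $g_0\in\dom\st_\alpha$ and your exact formula
\[
\st_{\alpha} (\chi_{(\varepsilon,k]}g_0,\chi_{(\varepsilon,k]}g_0)=\int_\varepsilon^k|g_0|^2\,\omega_\alpha\,dx+\int_\varepsilon^k|g_0|^2\,\eta_\alpha\,dx ,
\]
with $\int_\varepsilon^k|g_0|^2\omega_\alpha\,dx=\tfrac{2}{\alpha}(\sqrt{k^\alpha}-\sqrt{\varepsilon^\alpha})$, is a slightly sharper version of the paper's one-sided estimate $\st_\alpha(g_k,g_k)\ge 2(g_{k,o},g_{k,o})_{\omega_\alpha}$; and $||E_\alpha([-k,k])||_\alpha=1$ is obtained exactly as in the paper from the evenness of $\chi_{[-k,k]}$.

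The gap is in the very last step of (iii). From $||g_0||_\alpha^2=(g_0,g_0)_{\eta_\alpha}$ and $||E_\alpha((\varepsilon,k])g_0||_\alpha^2\ge\tfrac{2}{\alpha}(\sqrt{k^\alpha}-\sqrt{\varepsilon^\alpha})$, the ``defining ratio for the operator norm'' gives only
\[
||\,E_\alpha((\varepsilon,k])\,||_\alpha\ \ge\ \frac{||E_\alpha((\varepsilon,k])g_0||_\alpha}{||g_0||_\alpha}
=\left(\frac{2(\sqrt{k^\alpha}-\sqrt{\varepsilon^\alpha})}{\alpha\,(g_0,g_0)_{\eta_\alpha}}\right)^{1/2},
\]
i.e.\ the \emph{square root} of the claimed lower bound; equivalently, your ingredients bound $||E_\alpha((\varepsilon,k])||_\alpha^2$ from below by the stated quantity, not $||E_\alpha((\varepsilon,k])||_\alpha$ itself. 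The paper reaches the un-rooted bound by asserting the stronger inequality $\st_\alpha(g_k,g_k)\le||\,E_\alpha((\varepsilon,k])\,||_\alpha\,\st_\alpha(g_0,g_0)$ (the operator norm bounded below by the ratio of the \emph{squared} vector norms), which does not follow from the bare definition of the operator norm and needs a separate justification that your write-up does not supply. As it stands, your argument establishes a growth rate of order $k^{\alpha/4}$ rather than the asserted $k^{\alpha/2}$ for $||E_\alpha((\varepsilon,k])||_\alpha$; this still suffices to show that infinity is a singular critical point, but it does not prove the inequality stated in (iii).
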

\begin{proof}
(i) was already shown in Lemma \ref{lem_Mpm}(vii).

(ii) In Corollary \ref{cor_ex_non-neg_closure}(iv) (or Lemma \ref{lem_set_diff})
it was already mentioned that
$g_{0} \in \dom \st_{\alpha}$.
For the even function $g_0$
we introduce the notation $g_k := \chi_{(\varepsilon,k]} g_{0}$ 
and
\begin{eqnarray}
g_{k,o}(x) & := &  \frac{1}{2} (\chi_{(\varepsilon,k]}(x) g_{0}(x) - \chi_{(\varepsilon,k]}(-x) g_{0}(-x))     
\nonumber \\
& = & 
\frac{1}{2} (\chi_{(\varepsilon,k]}(x) g_{0}(x) - \chi_{[-k,-\varepsilon)}(x) g_{0}(x))  \qquad (x \in {\mathbb R})     \nonumber
\end{eqnarray}
for the odd part of $g_k$.
Then, 
we can estimate
\begin{eqnarray}
\st_{\alpha} (g_{k}, g_{k}) & = & 2 (g_{k,o},g_{k,o})_{\omega_{\alpha}} + (g_k,g_k)_{\eta_{\alpha}} \geq 2 (g_{k,o},g_{k,o})_{\omega_{\alpha}}     \nonumber \\
& = & \frac{1}{2} \int_{-k}^{-\varepsilon} |g_0|^2  \, \omega_{\alpha} \, dx
+ \frac{1}{2} \int_{\varepsilon}^k |g_0|^2  \, \omega_{\alpha} \, dx                  \nonumber \\
& = & \frac{1}{2} \int_{-k}^{-\varepsilon} |x|^{\frac{\alpha -2}{2}}  \, dx
+ \frac{1}{2} \int_{\varepsilon}^k |x|^{\frac{\alpha -2}{2}}  \, dx             
\nonumber    \\
& = &
\frac{2}{\alpha} (\sqrt{k^{\alpha}} - \sqrt{\varepsilon^{\alpha}})   
\quad \longrightarrow \infty   \qquad (k \longrightarrow \infty) .           \label{sqrt_k}
\end{eqnarray}
This implies (\ref{g_0_unbounded}).

(iii) For $f \in \dom \st_{\alpha}$ we can estimate
\[
\st_{\alpha} (E_{\alpha}([-k,k]) f, E_{\alpha}([-k,k]) f)
= 2 \int_{-k}^{k} |f_o|^2  \, \omega_{\alpha} \, dx + \int_{-k}^{k} |f|^2  \, \eta_{\alpha} \, dx
\leq \st_{\alpha} (f,f)
\]
and here, we have ``$=$'' if $f = \chi_{[-k,k]} f$. This implies $|| E_{\alpha}([-k,k]) ||_{\alpha} = 1$.
With $g_k = E_{\alpha}((\varepsilon,k]) g_0$ from (ii) we can further estimate
\[
\st_{\alpha} (g_{k}, g_{k}) \leq || \, E_{\alpha}((\varepsilon,k]) \, ||_{\alpha} \st_{\alpha} (g_{0}, g_{0}) .
\]
Therefore, using again the calculation in (\ref{sqrt_k}) we have
\[
|| \, E_{\alpha}((\varepsilon,k]) \, ||_{\alpha} 
\geq \frac{\st_{\alpha} (g_{k}, g_{k})}{\st_{\alpha} (g_{0}, g_{0}) }
\geq \frac{2 (\sqrt{k^{\alpha}} - \sqrt{\varepsilon^{\alpha}})}{\alpha (g_0,g_0)_{\eta_{\alpha}}}.
\]
since $g_0$ is even and hence $\st_{\alpha} (g_{0}, g_{0}) = (g_0,g_0)_{\eta_{\alpha}}$.
\end{proof}
Proposition \ref{prop_ex_no-converge}(iii) can be regarded as an addition (and partly sharpening)
for the norm estimates of the eigenspectral function from \cite{CGL}.
Note that Proposition \ref{prop_ex_no-converge} studies a similar question as at the end of Example \ref{ex_SturmLiouville}
and (in contrast to Example \ref{ex_SturmLiouville}) indeed, gives an answer in the present setting.

Furthermore, it should be mentioned that in the definition of the functions $\eta_{\alpha}$ and $\omega_{\alpha}$
the term $|x|^{\alpha}$ can also be replaced by a more general even function.
This observation shows that in Corollary \ref{cor_ex_MT_NT} we have ``$\subseteq$'' but not ``$=$''.
In other words, there are many other non-regular closures of
$(T\cdot,\cdot)_{r_-}$ in the Hilbert space $(L^2_{r_-}({\mathbb R}), \, (\cdot,\cdot)_{r_-})$.
Therefore, the value $\alpha \in [0,2]$ is certainly not the only information
which is stored in the closures of $(T\cdot,\cdot)_{r_-}$ in addition to the information of $T$ itself.


\subsection*{Acknowledgment}
The author wants to thank Henk de Snoo and Branko \'Curgus for some helpful hints.

\subsection*{Funding}
No funding was received to assist with the preparation of this manuscript.

\subsection*{Competing Interests}
The author has no relevant financial or non-financial interests to disclose.

\subsection*{Data Availability}
The author declares that the data supporting the findings of this study are available within the paper.

\end{document}